\date{}
\newlength{\defbaselineskip}
\newcommand{\setlinespacing}[1]%
           {\setlength{\baselineskip}{#1 \defbaselineskip}}
\newcommand{\bR}{{\mathbb{R}}}
\newcommand{\N}{{\mathbb{N}}}
\newcommand{\actaqed}{\hfill $\actabox$}
{\medskip\noindent \textit{Proof of #1. }}%
{\actaqed \medskip}
\def\D{{\mathcal D}}
\def\cB{{\mathcal B}}
\def\C{{\mathcal C}}
\def\cC{{\mathcal C}}
\def\cD{{\mathcal D}}
\def \Tr{\mathcal T}
\def \cK{\mathcal K}
\def\cU{{\mathcal U}}
\def\cV{{\mathcal V}}
\def\cW{{\mathcal W}}
\def \V{\mathcal V}
\def\R{{\mathbb R}}
\def\Z{\mathbb Z}
\def \T{\mathbb T}
\def\bP{\mathbb P}
\def\bE{\mathbb E}
\def\bbC{\mathbb C}
\def \<{\langle}
\def\>{\rangle}
\def \La{\Lambda}
\def\Og{\Omega}
\def \e{\varepsilon}
\def \de{\delta}
\def\al{\alpha}
\def\la{\lambda}
\def \sp{\operatorname{span}}
\def\bx{\mathbf x}
\def\by{\mathbf y}
\def\bz{\mathbf z}
\def\bk{\mathbf k}
\def\bn{\mathbf n}
\newtheorem{Theorem}{Theorem}[section]
\newtheorem{Lemma}{Lemma}[section]
\newtheorem{Proposition}{Proposition}[section]
\newtheorem{Corollary}{Corollary}[section]
\theoremstyle{definition}
\newtheorem{Remark}{Remark}[section]
\newtheorem{Example}{Example}[section]
\numberwithin{equation}{section}
\newcommand{\be}{\begin{equation}}
\newcommand{\ee}{\end{equation}}
\begin{document}

\title{Sampling discretization of the uniform norm and applications}

\author{E. Kosov\thanks{Steklov Mathematical Institute of Russian Academy of Sciences, Moscow, Russia;
National Research University Higher School of Economics, Russian Federation.}\,
 and   V. Temlyakov\thanks{ Steklov Mathematical Institute of Russian Academy of Sciences, Moscow, Russia; Lomonosov Moscow State University;  Moscow Center of Fundamental and Applied Mathematics;  University of South Carolina.}}

\newcommand{\Addresses}{{
  \bigskip
  \footnotesize

\medskip
E.D. Kosov, \textsc{
Steklov Mathematical Institute of Russian Academy of Sciences, Moscow, Russia; \\
National Research University Higher School of Economics, Russian Federation.
  \\
E-mail:} \texttt{ked\_2006@mail.ru}

 \medskip
  V.N. Temlyakov, \textsc{Steklov Mathematical Institute of Russian Academy of Sciences, Moscow, Russia; \\Lomonosov Moscow State University;  \\Moscow Center of Fundamental and Applied Mathematics;  \\University of South Carolina.
  \\
E-mail:} \texttt{temlyakovv@gmail.com}

}}
\maketitle

\begin{abstract}
	{Discretization of the uniform norm of functions from a given finite dimensional subspace of continuous functions is studied. Previous known results show that for any $N$-dimensional subspace of the space of continuous functions it is sufficient to use $e^{CN}$ sample points for an accurate upper bound for the uniform norm by the discrete norm and   that one cannot improve on the exponential growth of the number of sampling points for a good discretization theorem in the uniform norm. In this paper we focus on two types of results, which allow us to obtain good discretization of the uniform norm with polynomial in $N$ number of points. In the first way we weaken the discretization inequality by allowing a bound of the uniform norm by the discrete norm multiplied by an extra factor, which may depend on $N$. In the second way we impose restrictions on the finite dimensional subspace under consideration. In particular,
we prove a general result, which connects the upper bound on the number of sampling points in the discretization theorem for the uniform norm with the best $m$-term bilinear approximation of the Dirichlet kernel associated with the given subspace.
 }
\end{abstract}

{\it Keywords and phrases}: Sampling discretization, entropy numbers, Nikol'skii inequality.

{\it MSC classification 2000:} Primary 65J05; Secondary 42A05, 65D30, 41A63.

\section{Introduction}
\label{I}

Last decade was the period of systematic study of the problem of discretization of the $L_q$, $1\le q\le\infty$, norms of elements of finite dimensional subspaces (see the survey papers \cite{DPTT} and \cite{KKLT}). It was understood that the problem of discretization of the uniform norm ($L_\infty$) is
fundamentally different from the discretization problem of the integral norms ($L_q$, $1\le q <\infty$).
In this paper we mostly focus on discretization of the uniform norm. We begin with some standard notations and definitions.

Let $\Omega$ be a subset of $\R^d$ with a
probability measure $\mu$. By the $L_q$, $1\le q< \infty$, norm we understand
$$
\|f\|_q:=\|f\|_{L_q(\Omega,\mu)} := \left(\int_\Omega |f|^qd\mu\right)^{1/q}.
$$
By the $L_\infty$ norm we understand the uniform norm on the space $\cC(\Omega)$ of all continuous functions on $\Omega$, i.e.
$$
\|f\|_\infty := \max_{\bx\in\Omega} |f(\bx)|\quad \forall f\in \cC(\Omega).
$$
In the cases when it is important to emphasize which norm is being used (e.g. for the entropy numbers in section 4),
we will write $L_\infty(\Omega)$ in place of $\cC(\Omega)$.

In this paper we mainly study the following discretization problem for the uniform norm.

{\bf The Bernstein discretization problem.} Let $C>0$ and $\Omega$ be a compact subset of $\R^d$.
For which $m$ a given subspace $X_N\subset \mathcal C(\Omega)$
(index $N$ here, usually, stands for the dimension of $X_N$)
has the property: There exists a set
$$
\Big\{\xi^j \in \Omega: j=1,\dots,m\Big\}
$$
 such that for any $f\in X_N$ we have
\be\label{BI1}
 \|f\|_\infty \le C\max_{1\le j\le m} |f(\xi^j)| ?
\ee

The first results on discretization of the uniform norm
were obtained by Bernstein \cite{Bern1} and \cite{Bern2} (see also \cite{Z}, Ch.10, Theorem (7.28)). In recent years this problem
has been extensively studied in various settings
(e.g. see \cite{Kr11},  \cite{VT168}, \cite{KKT},
   \cite{DP23}, surveys \cite{DPTT}, \cite{KKLT} and citations therein).

Along with the above problem of the uniform norm discretization
we will also consider discretization of the general $L_q$ norms
of elements of finite-dimensional subspaces.
Namely, the following problem will also be studied.

{\bf The Marcinkiewicz discretization problem.}
Let $\Omega$ be a compact subset of $\R^d$ with a probability measure $\mu$.
We say that a linear subspace $X_N$
of $L_q(\Omega,\mu)$, $1\le q < \infty$, admits the Marcinkiewicz-type discretization theorem
with parameters $m\in \N$ and $q$ and positive constants $C_1\le C_2$ if there exists a set
$$
\Big\{\xi^j \in \Omega: j=1,\dots,m\Big\}
$$
such that for any $f\in X_N$ we have
\be\label{I1a}
C_1\|f\|_q^q \le \frac{1}{m} \sum_{j=1}^m |f(\xi^j)|^q \le C_2\|f\|_q^q.
\ee
We are interested in the following problem.
For which $m$ a given subspace $X_N\subset L_q(\Omega,\mu)$ does
admit the Marcinkiewicz-type discretization theorem with parameters
$m\in \N$ and $q$?

Inequalities (\ref{I1a}) are also known in the literature under the name of Marcinkiewicz-Zygmund inequalities.

The following results on discretization of the uniform norm of elements of an $N$-dimensional subspace $X_N$ of $\cC(\Omega)$ are known.

\begin{Theorem}[\cite{KKT}]\label{IT3} Let $X_N$ be an $N$-dimensional subspace of $\cC(\Omega)$.
There exists a set $\xi=\{\xi^\nu\}_{\nu=1}^m$ of $m\le 9^N$ points such that for any $f\in X_N$ we
have
\be\label{I2a}
\|f\|_\infty \le 2\max_\nu |f(\xi^\nu)|.
\ee
\end{Theorem}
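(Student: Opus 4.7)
The plan is to use a net argument with a volumetric covering estimate on the unit ball of $X_N$ endowed with the sup-norm. Set $B := \{f \in X_N : \|f\|_\infty \le 1\}$. Since $X_N \subset \cC(\Omega)$, the uniform norm restricts to a genuine norm on the $N$-dimensional space $X_N$ (a continuous function vanishing on $\Omega$ is identically zero). Hence the standard volume-comparison argument in a finite-dimensional normed space gives the covering bound
$$
N(B, \varepsilon B) \le \left(1 + \tfrac{2}{\varepsilon}\right)^N
$$
for any $\varepsilon > 0$. I will take $\varepsilon = 1/4$, which yields an $(1/4)$-net $\{f_1,\dots,f_M\} \subset B$ with $M \le 9^N$.

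For each $j$, since $\Omega$ is compact and $f_j$ is continuous, pick $\xi^j \in \Omega$ with $|f_j(\xi^j)| = \|f_j\|_\infty$ (if $f_j \equiv 0$ pick $\xi^j$ arbitrarily). The claim is that $\xi = \{\xi^j\}_{j=1}^m$ with $m \le 9^N$ does the job. By homogeneity it suffices to check (\ref{I2a}) for $f \in X_N$ with $\|f\|_\infty = 1$. Choose $f_j$ in the net with $\|f - f_j\|_\infty \le 1/4$; then $\|f_j\|_\infty \ge 3/4$, so that
$$
|f(\xi^j)| \ge |f_j(\xi^j)| - |f(\xi^j) - f_j(\xi^j)| \ge \|f_j\|_\infty - \tfrac14 \ge \tfrac34 - \tfrac14 = \tfrac12,
$$
which gives $\max_\nu |f(\xi^\nu)| \ge 1/2$, and hence $\|f\|_\infty \le 2 \max_\nu |f(\xi^\nu)|$.

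The whole argument rests on two ingredients: the volumetric covering estimate for the unit ball of an $N$-dimensional normed space, and the observation that continuous functions on a compact set attain their sup norm. Neither step presents an obstacle; the only real choice is the net parameter, and $\varepsilon = 1/4$ is tuned precisely to balance the covering bound $(1+2/\varepsilon)^N$ against the requirement $1 - \varepsilon \ge 1/2$ coming from the constant $C = 2$ in (\ref{I2a}). Any smaller $\varepsilon$ would worsen the exponential base, and $\varepsilon = 1/4$ gives the advertised base $9$.
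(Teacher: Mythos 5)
Your proof is correct and is essentially the argument behind the cited result from \cite{KKT}, whose proof this paper does not reproduce: a volumetric $\varepsilon$-net of the sup-norm unit ball of $X_N$ with $\varepsilon=1/4$, together with one norm-attaining point per net element. The only caveat worth recording is that the covering bound $(1+2/\varepsilon)^N$ counts real dimension, so for a complex $N$-dimensional subspace the same argument yields $9^{2N}$ rather than $9^N$ points.
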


\begin{Theorem}[\cite{VT168},\cite{DPTT}]\label{IT2} Let $\Lambda_N = \{k_j\}_{j=1}^N$ be a lacunary sequence: $k_1=1$, $k_{j+1} \ge bk_j$, $b>1$, $j=1,\dots,N-1$. Assume that a finite set $\xi=\{\xi^\nu\}_{\nu=1}^m\subset \mathbb T$ has
the following property
\begin{equation}\label{I1b}
\|f\|_\infty \le L\max_{\nu}|f(\xi^\nu)| \qquad
\forall f\in \Tr(\Lambda_N)
\end{equation}
where
$\Tr(\Lambda_N):=
\bigl\{f(x)=\sum_{k\in \Lambda_N}c_je^{ikx}\bigr\}$.
Then
$$
m \ge (N/e)e^{CN/L^2}
$$
with a constant $C>0$ which may only depend on $b$.
\end{Theorem}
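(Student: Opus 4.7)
My plan is to exploit two classical features of lacunary trigonometric systems---Sidon's inequality and sub-Gaussian concentration of Rademacher sums---together with the assumed discretization. Sidon's theorem provides a constant $K=K(b)$ such that $\sum_{j=1}^N |c_j|\le K\|f\|_\infty$ for every $f=\sum_j c_je^{ik_jx}\in\Tr(\Lambda_N)$. In particular, for each sign pattern $\epsilon\in\{-1,+1\}^N$ the Rademacher polynomial $f_\epsilon(x):=\sum_{j=1}^N\epsilon_je^{ik_jx}$ satisfies $\|f_\epsilon\|_\infty\ge N/K$, and the hypothesis (\ref{I1b}) then forces, for every such $\epsilon$, the existence of $\nu(\epsilon)\in\{1,\ldots,m\}$ with $|f_\epsilon(\xi^{\nu(\epsilon)})|\ge N/(KL)$.

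Turning this into a lower bound on $m$ is a double-counting step. Viewing $\epsilon$ as uniformly distributed on $\{-1,+1\}^N$, the value $f_\epsilon(\xi^\nu)=\sum_j\epsilon_je^{ik_j\xi^\nu}$ is a complex Rademacher sum with unit-modulus coefficients; splitting into real and imaginary parts and applying Hoeffding's inequality yields a sub-Gaussian tail of the form $\bP(|f_\epsilon(\xi^\nu)|\ge t)\le 4\exp(-ct^2/N)$ with an absolute $c>0$. Summing the inequality ``$\max_\nu|f_\epsilon(\xi^\nu)|\ge N/(KL)$'' over $\epsilon$ and interchanging the order of summation gives
$$2^N=\sum_\epsilon 1\le \sum_{\nu=1}^m\#\bigl\{\epsilon:|f_\epsilon(\xi^\nu)|\ge N/(KL)\bigr\}\le 4m\cdot 2^N\exp\!\Bigl(-\frac{c'N}{L^2}\Bigr),$$
with $c'=c'(b)>0$, which yields the ``basic'' exponential bound $m\ge\tfrac14\exp(c'N/L^2)$.

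The remaining---and more delicate---task is to squeeze the polynomial factor $N/e$ out of the argument. One route is to combine the exponential estimate just obtained with the trivial injectivity constraint $m\ge N$ (since (\ref{I1b}) with finite $L$ forces the evaluation map on $\Tr(\Lambda_N)$ to be one-to-one), sacrificing a small part of the exponent to absorb the polynomial factor. A second, arguably cleaner route is to enrich the family of test functions: rather than taking a single Rademacher polynomial of full length $N$, one averages the counting step over random subsets $S\subset\Lambda_N$ of size roughly $\alpha N$ and over signs on $S$, so that a combinatorial factor $\binom{N}{|S|}\sim(N/e)^{|S|}$ enters the count via Stirling. I expect this polynomial sharpening to be the main obstacle, since Sidon plus Hoeffding hands over the exponential growth almost for free, whereas the extra factor $N/e$ requires genuine use of either injectivity or the algebraic richness of the lacunary set beyond the mere Sidon constant.
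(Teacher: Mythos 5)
Your first three steps are sound and essentially reproduce the standard opening of the cited argument: Sidon's theorem for lacunary sequences gives $\|f_\epsilon\|_\infty\ge N/K(b)$ for every sign polynomial $f_\epsilon=\sum_j\epsilon_je^{ik_jx}$, the hypothesis forces $\max_\nu|f_\epsilon(\xi^\nu)|\ge N/(KL)$, and Hoeffding plus double counting yields $m\ge\tfrac14e^{c(b)N/L^2}$. The problem is that neither of your two proposed routes to the factor $N/e$ works, and this factor is not cosmetic: it is exactly what makes the theorem yield $D(Q,m,d)\ge C(R)\sqrt N$ for $m\le RN$ with no logarithmic loss (cf.\ Remark \ref{AR3}). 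Route (a) fails because $\max\{N,\tfrac14e^{cN/L^2}\}$ does not dominate $\tfrac{N}{e}e^{CN/L^2}$ for any fixed $C>0$: in the regime $N/L^2=\alpha\ln N$ with $\alpha$ a small constant, your two bounds give $\max\{N,\tfrac14N^{c\alpha}\}=N$, while the target is $N^{1+C\alpha}/e$, which is superlinear in $N$; no ``sacrifice of a small part of the exponent'' repairs this, since the exponent there is already only of order $\ln N$. Route (b) fails because the combinatorial factor you hope to gain cancels identically: with test functions $f_{S,\epsilon}=\sum_{j\in S}\epsilon_je^{ik_jx}$, $|S|=n$, the double count reads $\binom{N}{n}2^n\le 4m\binom{N}{n}2^ne^{-c n/(KL)^2}$, so $\binom{N}{n}$ appears on both sides and you are left with $m\ge\tfrac14e^{cn/(KL)^2}$, which is weaker than what you already had at $n=N$.

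The missing ingredient in the cited proof (\cite{VT168}, \cite{DPTT}) is Spencer's ``six standard deviations suffice'' theorem (in the Gluskin form for $M\ge N$ vectors): for any $a^1,\dots,a^M\in[-1,1]^N$ there exists a \emph{single} sign vector $\epsilon\in\{-1,1\}^N$ with $\max_{\mu\le M}|\langle\epsilon,a^\mu\rangle|\le C\sqrt{N\ln(eM/N)}$. Applying this to the real and imaginary parts of the vectors $(e^{ik_j\xi^\nu})_{j=1}^N$, $\nu=1,\dots,m$, produces one fooling polynomial $f_\epsilon$ with $\max_\nu|f_\epsilon(\xi^\nu)|\le C\sqrt{N\ln(2em/N)}$, while Sidon still gives $\|f_\epsilon\|_\infty\ge N/K(b)$; the hypothesis (\ref{I1b}) then forces $N/K\le CL\sqrt{N\ln(2em/N)}$, i.e.\ $m\ge \tfrac{N}{2e}\,e^{C(b)N/L^2}$. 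The whole difference between your bound and the theorem is the replacement of the union-bound quantity $\sqrt{N\ln m}$ (all $2^N$ sign patterns treated at once) by Spencer's $\sqrt{N\ln(em/N)}$ (existence of one good pattern), and that replacement cannot be extracted from Hoeffding's inequality alone. You correctly diagnosed where the difficulty lies, but the argument as proposed proves only the weaker bound $m\ge\tfrac14e^{cN/L^2}$.
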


Thus, for discretization of the uniform norm with the bound (\ref{BI1}) it is sufficient to use exponentially (in $N$) many   points and this cannot be improved in general case. Certainly, for practical applications exponentially many points are forbidden. The main goal of this paper is to continue a discussion of the following two problems (see \cite{NoLN}, \cite{DPTT}, \cite{KKT}, \cite{KKLT}).

{\bf Problem 1.} How can we weaken the inequality
(\ref{BI1}) in order to obtain discretization with reasonable number of points?

{\bf Problem 2.} Under which conditions on $X_N$ can we obtain discretization (\ref{BI1}) with reasonable number of points?

Problem 1 is addressed in Sections \ref{A} and \ref{subDir}. Problem 2 is addressed in Section \ref{BD}.
We now give a very brief description of some results from those sections.

In Section \ref{A} we provide a sharper version of Theorem 6.6 from \cite{KKT}.
That theorem establishes a bound of type (\ref{BI1})
under the Nikol'skii's inequality $NI(2,\infty)H$ assumption (see the definition in Section \ref{A})
but with constant $C$ dependent on $N$.
In Theorem \ref{AT3}
we provide a bound for a weighted uniform norm
$\|\varphi(x)f(x)\|_\infty$ instead of the usual $\|f(x)\|_\infty$,
where the function $\varphi(x)$ is determined by the subspace $X_N$ and
actually is the Christoffel function of this subspace (see the definition in Section \ref{A}).

In Section \ref{subDir} we prove
the discretization inequality (\ref{BI1}) with an additional factor in the right hand side.
That additional factor comes from the special type of bilinear approximation of the Dirichlet kernel
associated with the subspace $X_N$. Section \ref{subDir} is a follow up of the paper \cite{KKT}.

In Section \ref{BD} we use the following general idea for studying Problem 2. For a given subspace
$X_N$ we use the discretization results of the $L_p$ norm with large $p$, which depends on $N$
(roughly $p= \ln N$). In order to apply such strategy we need to have good bounds on $m$ for discretization of the $L_p$ norm in terms of both the dimension $N$ and the parameter $p$.
There are two standard assumptions on $X_N$, which allow us to control $m$ for good discretization
of the $L_p$ norm. One assumption is in terms of the Nikol'skii inequality $NI(p,\infty)BN^{1/p}$ and
the other one is in terms of the entropy numbers of the $L_p$-unit ball of $X_N$ in the uniform norm.
It is known that these two assumptions are related -- the entropy numbers assumption is slightly stronger than the Nikol'skii inequality assumption. In Section \ref{BD} we present proofs under both assumptions.
Moreover, for the case of the entropy numbers assumption, we present two distinct proofs.
The main reason for that is that those proofs are based on different kinds of techniques and
one of the proofs under the entropy assumption seems to be more elementary than the other one.
We prove in Section \ref{BD} that under those assumptions we can guarantee discretization of the uniform norm in the form (\ref{BI1}) with the number $m$ of points growing polynomially in the dimension $N$.

In Section \ref{Re} we apply the discretization of the uniform norm results to the Remez-type inequalities. It was proved in \cite{TT} (see Theorem \ref{ReT1} below) that discretization of the uniform norm of trigonometric polynomials implies the corresponding Remez inequality. Typically, the Remez inequalities are obtained for
trigonometric polynomials with frequencies from standard domains. In the univariate case it is a segment
$[-n,n]$, in the multivariate case it is a box $[-n_1,n_1]\times\cdots\times[-n_d,n_d]$ or a hyperbolic cross $\Gamma(N)$. An important feature of the
results in Section \ref{Re} is that we prove
the Remez-type inequalities for subspaces of multivariate trigonometric polynomials with frequencies
from an arbitrary finite set $Q\subset \Z^d$.

\section{Discretization of the uniform norm under the Nikol'skii inequality assumption}
\label{A}

We firstly recall the Nikol'skii-type inequality assumption.

{\bf Nikol'skii-type inequalities.} Let $1\le q<p\le \infty$, and $X_N\subset L_q(\Omega)$. The inequality
\begin{equation}\label{I4a}
\|f\|_p \leq H\|f\|_q,\   \ \forall f\in X_N
\end{equation}
is called the Nikol'skii inequality for the pair $(q,p)$ with the constant $H$.
We will also use the brief form of this fact: $X_N \in NI(q,p)H$.

In \cite{KKT} the following observation was made.

\begin{Theorem}\label{AT2} There are two absolute constants $C_1$ and $C_2$ such that for any
$X_N \in NI(2,\infty)H$  there exists a set of $m\leq C_1N$ points
$\xi^1,\ldots, \xi^m\in\Omega$ with the property: For any $f\in X_N$ we have
\be\label{A-AT2}
\|f\|_\infty \le C_2 H \max_j |f(\xi^j)|.
\ee
\end{Theorem}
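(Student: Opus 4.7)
The plan is to combine the Nikol'skii inequality hypothesis with a known weighted $L_2$ Marcinkiewicz-type discretization theorem that requires only $m \le C_1 N$ points with absolute constants. The chain is trivial once the $L_2$ result is in hand: the Nikol'skii inequality converts $\|f\|_\infty$ into $\|f\|_2$, discretization converts $\|f\|_2$ into a weighted discrete $\ell_2$ sum over the sample points, and the sum is bounded by the maximum of $|f(\xi^j)|^2$ times the total mass of the weights.

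More concretely, I would first invoke a weighted $L_2$ Marcinkiewicz discretization result, which is available for an arbitrary $N$-dimensional subspace of $L_2(\Omega,\mu)$: there exist $m \le C_1 N$ points $\xi^1,\dots,\xi^m \in \Omega$ and nonnegative weights $w_1,\dots,w_m$ with $\sum_j w_j \le c_0$ (an absolute constant) such that
\begin{equation*}
\|f\|_2^2 \le c_1 \sum_{j=1}^m w_j |f(\xi^j)|^2, \qquad \forall f \in X_N,
\end{equation*}
with $c_1$ also absolute. (This is the kind of statement established via Batson--Spielman--Srivastava / Nitzan--Olevskii--Ulanovskii type constructions, which is the input used in \cite{KKT}.) Then for any $f \in X_N$, applying the hypothesis $X_N \in NI(2,\infty)H$ gives
\begin{equation*}
\|f\|_\infty^2 \le H^2 \|f\|_2^2 \le c_1 H^2 \sum_{j=1}^m w_j |f(\xi^j)|^2 \le c_1 c_0 H^2 \max_{1\le j \le m} |f(\xi^j)|^2,
\end{equation*}
and taking square roots yields the stated bound with $C_2 = \sqrt{c_0 c_1}$.

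The main obstacle is purely bibliographic: one must invoke a weighted $L_2$ discretization theorem that simultaneously delivers linearly many points ($m \le C_1 N$), bounded total weight, and an absolute equivalence constant. Once that black box is cited, the derivation above is a two-line chain, which also explains why the authors call this an ``observation'' rather than a theorem requiring a standalone argument. Since the hypothesis $NI(2,\infty)H$ is only used at the very end, the factor $H$ enters the final inequality linearly, matching the conclusion.
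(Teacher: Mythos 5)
Your proposal is correct and follows essentially the same route as the paper: the authors deduce Theorem \ref{AT2} from the weighted $L_2$ discretization result (Theorem \ref{AT1}, from \cite{LT}) combined with Remark \ref{AR1} (from \cite{VT183}), which supplies exactly the bounded total weight $\sum_j \lambda_j \le C_0'$ you need by adjoining constants to $X_N$ and testing with $f=1$; the final two-line chain via the Nikol'skii inequality is identical to yours.
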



We note that the
Nikol'skii-type inequality assumption $NI(2,\infty)H$ is in some sense natural
and is valid for many subspaces (for example, for subspaces of trigonometric polynomials with
any set of frequencies).

\begin{Remark}\label{AR2}
{\rm
Let $\cU_N:=\{u_j(\bx)\}_{j=1}^N$ be an orthonormal  basis of
$X_N$ on $(\Og,\mu)$.
It is well known and easy to see that
$$
\sup_{f\in  X_N, \|f\|_2\le 1} |f(\bx)| = \sup_{\sum_{j=1}^N|a_j|^2\le 1}\Bigl|\sum_{j=1}^N a_ju_j(\bx)\Bigr|
= \left(\sum_{j=1}^N |u_j(\bx)|^2\right)^{1/2} =: w(\bx).
$$
Thus, the condition $X_N \in NI(2,\infty)H$
is equivalent to the assumption that
for an orthonormal basis $\{u_1, \ldots, u_N\}$
in $X_N$ one has
$$
|u_1(\bx)|^2+\ldots+|u_N(\bx)|^2\le H^2.
$$
In particular, one always has the bound $H\ge\sqrt{N}$.
The function $w(\bx)^{-1}$ is known as the Christoffel function of the subspace $X_N$.
}
\end{Remark}

We note that the bound in Theorem \ref{AT2} is in some sense sharp.

\begin{Remark}\label{AR3}
{\rm
It is known (see Theorem 2.3 in \cite{DPTT} and Corollary in
\cite{VT168}) that for every number $R>0$ there is a constant
$C(R)>0$ such that for any pair of numbers
$N, m \in \mathbb{N}$, $m\le RN$,
there is a trigonometric subspace
\begin{equation}\label{eq-trig}
\mathcal{T}(Q)=\Bigl\{f(\bx)=\sum_{k\in Q}c_je^{i\bx k}\colon k\in Q\Bigr\},
\quad |Q|=N,
\end{equation}
such that
$$
D(Q,m,d):=\inf_{\xi^1,\ldots, \xi^m}\sup_{f\in \mathcal{T}(Q)}
\frac{\|f\|_\infty}{\max_{1\le j\le m}|f(\xi^j)|}\ge
C(R)\sqrt{N}.
$$
Thus, Theorem \ref{AT2} could not be improved
if we impose only the assumption $X_N \in NI(2,\infty)H$.
Moreover, imposing only this condition,
we cannot substantially improve
the extra factor in (\ref{A-AT2}) even
if we allow $m$ to be of order $N^k$ with some fixed $k$.
In particular (see \cite[Proposition 1.1]{KKT}),
for any number $k\in\N$
and any constant $R\ge 1$ there exists a positive constant $C=C(k,R)$ such that
for any pair of numbers
$N, m \in \mathbb{N}$, $m\le RN^k$,
there is a trigonometric subspace
$\mathcal{T}(Q)$ with $|Q|=N$
such that
$$
D(Q,m,d)\ge
C(k, R)\Bigl(\frac{N}{\log N}\Bigr)^{1/2}.
$$
}
\end{Remark}

Theorem \ref{AT2} was deduced from the following general fact concerning the discretization
of $L^2$-norm from \cite[Theorem 1.2]{LT} (see \cite{DPSTT2} for the proof of this result in the real case).

\begin{Theorem}\label{AT1} If $X_N$ is an $N$-dimensional subspace of the complex $L_2(\Omega,\mu)$, then there exist three absolute positive constants $C_1'$, $c_0'$, $C_0'$,  a set of $m\leq   C_1'N$ points $\xi^1,\ldots, \xi^m\in\Omega$, and a set of nonnegative  weights $\lambda_j$, $j=1,\ldots, m$,  such that
\[ c_0'\|f\|_2^2\leq  \sum_{j=1}^m \lambda_j |f(\xi^j)|^2 \leq  C_0' \|f\|_2^2,\  \ \forall f\in X_N.\]
\end{Theorem}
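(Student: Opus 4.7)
The plan is to reformulate the claim as a sparsification statement for a continuous tight frame and then invoke (a continuous extension of) the Batson--Spielman--Srivastava theorem. Concretely, fix an orthonormal basis $\{u_1,\dots,u_N\}$ of $X_N$ and consider the vector-valued map $\phi\colon\Omega\to\mathbb C^N$ defined by $\phi(\bx)=(u_1(\bx),\dots,u_N(\bx))$. For $f=\sum_j c_j u_j\in X_N$ with coefficient vector $c\in\mathbb C^N$, Parseval gives $\|f\|_2^2=c^*c$, while $|f(\bx)|^2=|\langle c,\phi(\bx)\rangle|^2=c^*\phi(\bx)\phi(\bx)^*c$. Moreover, orthonormality implies the continuous resolution of the identity
\[
\int_\Omega \phi(\bx)\phi(\bx)^*\,d\mu(\bx)=I_N.
\]

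So the problem reduces to the following operator-theoretic statement: given the $\mathbb C^N$-valued ``isotropic'' distribution $\phi_*\mu$, find $m\le C_1'N$ points $\xi^1,\dots,\xi^m\in\Omega$ and nonnegative weights $\lambda_1,\dots,\lambda_m$ with
\[
c_0'\,I_N \;\preceq\; \sum_{j=1}^m \lambda_j\,\phi(\xi^j)\phi(\xi^j)^* \;\preceq\; C_0'\,I_N
\]
in the Loewner order. Pairing this sandwich inequality with $cc^*$ and taking the supremum over unit $c$ recovers exactly the claimed two-sided discretization inequality.

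To produce such $(\xi^j,\lambda_j)$ I would proceed in two steps. First, approximate the continuous isotropic frame by a finite but possibly huge isotropic frame: by a standard compactness/quantization argument one can find a finite set of points with weights summing to $N$ whose rank-one operators sum to (a close approximation of) $I_N$. Second, apply the Batson--Spielman--Srivastava sparsification theorem to this finite isotropic ensemble, which produces a reweighted sub-ensemble of size $m=O(N)$ with spectrum in $[c_0',C_0']$ with absolute constants. (For the complex scalar case one either applies the complex variant directly or embeds $\mathbb C^N\hookrightarrow\mathbb R^{2N}$ and uses the real BSS theorem; the isotropy is preserved by this embedding.)

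The main obstacle is the BSS sparsification itself: its proof requires the barrier-function potential method that tracks both upper and lower shifted traces of the partial sum, showing a rank-one update exists which increases both potentials by controlled amounts. Everything else in the proof is either bookkeeping (setting up the frame picture) or standard approximation (passing from the continuous measure $\mu$ to a finitely supported atomic approximation before invoking BSS). Once the operator sandwich is in hand, unwinding the reformulation in the first paragraph immediately yields Theorem \ref{AT1}.
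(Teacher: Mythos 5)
First, note that the paper does not prove Theorem \ref{AT1} at all: it is imported verbatim from \cite[Theorem 1.2]{LT}, with the real case credited to \cite{DPSTT2}, so there is no in-paper argument to compare against. Your overall route (orthonormal basis $\Rightarrow$ isotropic continuous frame $\Rightarrow$ finite isotropic frame $\Rightarrow$ Batson--Spielman--Srivastava sparsification) is indeed the route taken in those references and discussed in Section 6 of this paper. Two of your steps, however, are glossed over in ways that would fail as written. First, the reduction to a finite frame: $X_N$ is only assumed to lie in $L_2(\Omega,\mu)$, so $\phi$ is merely measurable and typically unbounded, and a ``compactness/quantization'' argument has nothing to grip. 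What saves this step is not compactness but an exact Carath\'eodory/Tchakaloff-type quadrature (Richter's theorem): since $I_N=\int_\Omega\phi(\bx)\phi(\bx)^*\,d\mu$ lies in the cone generated by $\{\phi(\bx)\phi(\bx)^*\}$ inside the $N^2$-dimensional real space of Hermitian matrices, there exist at most $N^2+1$ points of $\Omega$ and nonnegative weights reproducing $I_N$ exactly. (Also, the weights satisfy $\sum_k w_k\|\phi(x^k)\|^2=N$, not $\sum_k w_k=N$.) The cited papers instead handle this passage by a change of density making $\sum_j|u_j|^2$ constant followed by a discretization with finitely many points, and the fact that no auxiliary integrability assumption beyond $L_2$ is needed was itself a contribution of \cite{DPSTT2}.

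The more serious gap is the complex case. Embedding $\mathbb C^N\hookrightarrow\mathbb R^{2N}$ does preserve isotropy, but each point $\bx$ then contributes a \emph{rank-two} positive semidefinite matrix, because $|f(\bx)|^2=|\langle c,\phi(\bx)\rangle|^2$ is the sum of two real squares coming from the two real vectors associated with $\phi(\bx)$. If you feed the resulting $2M$ real vectors to the real BSS theorem, it will assign the two vectors of a pair different (possibly zero) weights, and the sparsified sum no longer has the form $\sum_j\lambda_j|f(\xi^j)|^2$ for points $\xi^j$. You need either a block/higher-rank sparsification theorem or a genuinely complex version of BSS; the latter is precisely the nontrivial content of \cite{LT} and \cite{BSU}, and the paper explicitly flags the complex extension as recent and as requiring a separate argument. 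So the parenthetical dismissal of this point hides the main difficulty of the theorem as stated.
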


For our application we need an extra property of weights. We use the following Remark \ref{AR1} from \cite{VT183}.
\begin{Remark}\label{AR1} Considering a new subspace $X_N' := \{f\,:\, f= g+c, \, g\in X_N,\, c\in \bbC\}$ and applying Theorem \ref{AT1} to
the $X_N'$ with $f=1$ ($g=0$, $c=1$) we conclude that a version of Theorem \ref{AT1} holds with $m\le C_1'N$ replaced by $m\le C_1'(N+1)$ and with weights satisfying
$$
\sum_{j=1}^m \lambda_j \le C_0'.
$$
\end{Remark}

We now prove a weighted generalization of Theorem \ref{AT2}.

\begin{Theorem}\label{AT3}
There are two absolute constants $C_1$ and $C_2$ such that for any $X_N$
there exists a set of $m\leq C_1N$ points $\xi^1,\ldots, \xi^m\in\Omega$ with the property:
For any $f\in X_N$ we have
$$
\|(|u_1|^2+\ldots+|u_N|^2)^{-1/2}f\|_\infty \le C_2 \max_j |f(\xi^j)|,
$$
where $\{u_1, \ldots, u_N\}$ is any orthonormal basis in $X_N$ with respect to any probability
measure $\mu$ on $\Omega$.
In particular,
$$
\|f\|_\infty \le C_2
\|w\|_\infty \cdot \max_j |f(\xi^j)|,
$$
where
$w(\bx):=(|u_1(\bx)|^2+\ldots+|u_N(\bx)|^2)^{1/2}$.
\end{Theorem}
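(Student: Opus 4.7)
My plan is to combine a pointwise Christoffel-type bound with the $L_2$ discretization of Theorem \ref{AT1} (refined by Remark \ref{AR1}), with essentially no additional technical work.

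First I would fix a probability measure $\mu$ on $\Omega$ and an orthonormal basis $\{u_1,\ldots,u_N\}$ of $X_N$ in $L_2(\Omega,\mu)$. Writing any $f \in X_N$ as $f = \sum_{j=1}^N a_j u_j$, a one-line Cauchy--Schwarz gives the pointwise bound
\[
|f(\bx)| \le \Bigl(\sum_{j=1}^N |a_j|^2\Bigr)^{1/2}\Bigl(\sum_{j=1}^N |u_j(\bx)|^2\Bigr)^{1/2} = \|f\|_2 \cdot w(\bx), \qquad \bx \in \Omega.
\]
This is exactly the Christoffel-function bound already recorded in Remark \ref{AR2}; rewritten as $|f(\bx)|/w(\bx) \le \|f\|_2$ (with both sides being $0$ when $w(\bx) = 0$), it is the one observation that makes the entire argument go through.

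Next I would invoke Theorem \ref{AT1} together with Remark \ref{AR1} to obtain a set of $m \le C_1'(N+1)$ points $\xi^1,\ldots,\xi^m \in \Omega$ and nonnegative weights $\lambda_j$ with $\sum_j \lambda_j \le C_0'$ and $c_0'\|f\|_2^2 \le \sum_{j=1}^m \lambda_j |f(\xi^j)|^2$ for all $f \in X_N$. Bounding the right-hand side by $\bigl(\sum_j \lambda_j\bigr)\max_j |f(\xi^j)|^2 \le C_0'\max_j |f(\xi^j)|^2$ yields $\|f\|_2 \le \sqrt{C_0'/c_0'}\,\max_j |f(\xi^j)|$. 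Chaining this estimate with the pointwise bound gives $|f(\bx)|/w(\bx) \le C_2 \max_j |f(\xi^j)|$ for every $\bx \in \Omega$, and taking the supremum in $\bx$ establishes the first asserted inequality; the second follows immediately from $\|f\|_\infty \le \|w\|_\infty \cdot \|f/w\|_\infty$.

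There isn't really a main obstacle here — the substance of the theorem is the observation that the weight $1/w$ is precisely what converts the $N$-dependent Nikol'skii factor $H$ of Theorem \ref{AT2} into an absolute constant: the pointwise inequality $|f| \le w\cdot\|f\|_2$ holds for \emph{every} subspace $X_N$ without any Nikol'skii hypothesis, so plugging it into the $L_2$ discretization of Theorem \ref{AT1} instantly produces a weighted uniform discretization with $m \lesssim N$ and an absolute constant $C_2$. The only minor bookkeeping point is that $w$ may vanish, which is harmless since $|f(\bx)| \le w(\bx)\|f\|_2$ forces $f(\bx) = 0$ wherever $w(\bx) = 0$, so the ratio $f/w$ extends by zero.
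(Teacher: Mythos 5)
Your proof is correct. It rests on exactly the same two pillars as the paper's argument --- the pointwise Cauchy--Schwarz (Christoffel) bound $|f(\bx)|\le w(\bx)\,\|f\|_2$ of Remark \ref{AR2}, and the exact $L_2$ Marcinkiewicz discretization of Theorem \ref{AT1} with the weight normalization $\sum_j\lambda_j\le C_0'$ obtained by testing against the constant function --- but you assemble them more directly. The paper first renormalizes: it passes to the measure $\widetilde\mu=U\mu$ with $U=\frac{1}{N}\sum_k|u_k|^2$ and the basis $\widetilde u_j=U^{-1/2}u_j$, so that the new subspace $\widetilde X_N$ satisfies $NI(2,\infty)\sqrt N$ exactly; it then applies Theorem \ref{AT1} to $(\widetilde X_N,\widetilde\mu)$ and translates back, controlling $\sum_j\lambda_jU^{-1}(\xi^j)$ by evaluating at $f=1$. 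You instead apply Theorem \ref{AT1} (via Remark \ref{AR1}) to $(X_N,\mu)$ itself and chain $\|f/w\|_\infty\le\|f\|_2\le\sqrt{C_0'/c_0'}\,\max_j|f(\xi^j)|$. The two routes produce in general different point sets --- the paper's points discretize $\widetilde X_N$ in $L_2(\widetilde\mu)$, yours discretize $X_N$ in $L_2(\mu)$ --- but both give $m\le C_1N$ and an absolute constant $C_2$. Your version has the advantage of avoiding the change of measure entirely, while the paper's reformulation makes explicit that the weighted problem is equivalent to an unweighted one for a subspace attaining the optimal Nikol'skii constant $\sqrt N$. Your remark about the set $\{w=0\}$ (on which $f$ necessarily vanishes, so $f/w$ extends by zero) matches the paper's restriction to $\widetilde\Omega=\{U>0\}$, and the dimension bookkeeping $m\le C_1'(N+1)\le 2C_1'N$ from Remark \ref{AR1} is harmless.
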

\begin{proof} Without loss of generality we assume that
$1\in X_N$.
Let $\{u_1, \ldots, u_N\}$ be an orthonormal basis of $X_N$.
We set
$$
U(\bx):=\frac{1}{N}\sum_{k=1}^{N}|u_k(\bx)|^2,\quad \widetilde{u}_j:= U^{-1/2}u_j,\quad
\widetilde{\mu}:= U\cdot\mu
$$
and we consider this new probability measure $\widetilde{\mu}$ on
$\widetilde{\Omega}:=\{\bx\in \Omega\colon U(\bx)>0\}$.
Let $\widetilde{X}_N:=span\{\widetilde{u}_1, \ldots, \widetilde{u}_N\}$. Then
$\{\widetilde{u}_1, \ldots, \widetilde{u}_N\}$ is an orthonormal basis in $\widetilde{X}_N$
with respect to the standard inner product in $L^2(\widetilde{\mu})$. Moreover,
$$
|\widetilde{u}_1(\bx)|^2 + \ldots + |\widetilde{u}_N(\bx)|^2 = N\quad \forall
\bx\in \widetilde{\Omega},
$$
i.e. $\widetilde{X}_N \in NI(2,\infty)\sqrt{N}$.

By Theorem \ref{AT1}
there are $m\le C_1N$ points $\xi^1,\ldots, \xi^m\in\Omega$ such that
$$
C_2\|\widetilde{f}\|_{L^2(\widetilde{\mu})}^2\le
\sum\limits_{j=1}^m\lambda_j|\widetilde{f}(\xi^j)|^2\le C_3\|\widetilde{f}\|_{L^2(\widetilde{\mu})}^2
\quad \forall \widetilde{f}\in \widetilde{X}_N.
$$
We note that, for each $f\in X_N$, the function
$\widetilde{f}=U^{-1/2}f$ belongs to the space~$\widetilde{X}_N$ and that
$\|\widetilde{f}\|_{L^2(\widetilde{\mu})} = \|f\|_{L^2(\mu)}$.
Thus, for every $f\in X_N$
\begin{multline*}
\|U^{-1/2}f\|_\infty^2\le N\|U^{-1/2}f\|_{L^2(\widetilde{\mu})}^2
\le \frac{1}{C_2} N
\sum\limits_{j=1}^m\lambda_jU^{-1}(\xi_j)|f(\xi^j)|^2\le\\ \le
\max_j |f(\xi^j)|^2
\frac{1}{C_2}N \sum\limits_{j=1}^m\lambda_jU^{-1}(\xi_j).
\end{multline*}
On the other hand, taking $f = 1$ we get that
$$
 \sum\limits_{j=1}^m\lambda_jU^{-1}(\xi_j)\le
C_3\|1\|_{L^2(\mu)}^2 = C_3.
$$
Summing up all the above estimates we get the bound
\begin{multline*}
\sup\limits_{\bx\in \Omega}\Bigl|\frac{f(\bx)}{(|u_1(\bx)|^2+\ldots+|u_N(\bx)|^2)^{1/2}}\Bigr|
= \frac{1}{\sqrt{N}}\|U^{-1/2}f\|_\infty
\le \\ \le
\frac{1}{\sqrt{N}}\max_j |f(\xi^j)|
\sqrt{\frac{C_3N}{C_2}}
= \sqrt{\frac{C_3}{C_2}}\max_j |f(\xi^j)|.
\end{multline*}
The theorem is proved.
\end{proof}

David Krieg, while working on the preprints \cite{KPUU23} and \cite{KPUU24}, pointed out to the authors that there is a known result of J. Kiefer and J. Wolfowitz \cite{KW}
(see also \cite[Section 2]{KPUU24}),
which guarantees that for any finite dimensional real subspace $X_N$ of $\cC(\Omega)$
there exists a probability measure $\mu$ on $\Omega$ such that for all
$f\in X_N$ we have

\be\label{KW}
\|f\|_\infty \le \sqrt{N}\|f\|_{L_2(\Omega,\mu)}.
\ee

This and Theorem \ref{AT3} or Theorem \ref{AT2} imply the following statement.

\begin{Corollary}\label{Cor2.1}
There are two absolute constants $C_1$ and $C_2$ such that for any real $X_N \subset \cC(\Omega)$
there exists a set of $m\leq C_1N$ points $\xi^1,\ldots, \xi^m\in\Omega$ with the property:
For any $f\in X_N$ we have
\begin{equation}\label{eq-bound-0}
\|f\|_\infty \le C_2\sqrt{N} \max_j |f(\xi^j)|.
\end{equation}
\end{Corollary}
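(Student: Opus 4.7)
The plan is to combine the Kiefer–Wolfowitz existence statement (\ref{KW}) with one of the two discretization theorems already proved in this section. The Kiefer–Wolfowitz result says that for any real $N$-dimensional $X_N \subset \cC(\Omega)$ we may \emph{choose} a probability measure $\mu$ on $\Omega$ for which
\[
\|f\|_\infty \le \sqrt{N}\,\|f\|_{L_2(\Omega,\mu)}, \qquad \forall f \in X_N.
\]
In other words, with respect to this specific $\mu$ the subspace $X_N$ satisfies the Nikol'skii-type inequality $NI(2,\infty)\sqrt{N}$. This is precisely the hypothesis of Theorem \ref{AT2} with $H = \sqrt{N}$.

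Given this, the proof is essentially a one-line reduction. First, invoke the Kiefer–Wolfowitz measure $\mu$ to ensure $X_N \in NI(2,\infty)\sqrt{N}$. Second, apply Theorem \ref{AT2} to the pair $(X_N,\mu)$ with $H = \sqrt{N}$: this produces a set $\{\xi^1,\dots,\xi^m\} \subset \Omega$ with $m \le C_1 N$ (where $C_1$ is the absolute constant from Theorem \ref{AT2}) such that
\[
\|f\|_\infty \le C_2 H \max_{1\le j\le m} |f(\xi^j)| = C_2 \sqrt{N} \max_{1\le j\le m} |f(\xi^j)|
\]
for every $f \in X_N$, which is the desired inequality (\ref{eq-bound-0}).

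Alternatively, Theorem \ref{AT3} gives the same conclusion almost as directly: by Remark \ref{AR2} the bound $\|f\|_\infty \le \sqrt{N}\|f\|_2$ for the measure supplied by Kiefer–Wolfowitz is equivalent to $\|w\|_\infty \le \sqrt{N}$, where $w(\bx) = \bigl(\sum_k |u_k(\bx)|^2\bigr)^{1/2}$ for any $\mu$-orthonormal basis of $X_N$. Plugging $\|w\|_\infty \le \sqrt{N}$ into the second conclusion of Theorem \ref{AT3} gives $\|f\|_\infty \le C_2 \sqrt{N} \max_j |f(\xi^j)|$, again with $m \le C_1 N$.

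There is no real obstacle — the entire content of the corollary lies in the existence statement (\ref{KW}) and the work already done in Theorem \ref{AT2} (or Theorem \ref{AT3}). The only tiny point worth noting is that both Theorem \ref{AT2} and Theorem \ref{AT3} are formulated for an a priori given measure $\mu$, so one must be slightly careful to use Kiefer–Wolfowitz \emph{first} in order to have the $NI(2,\infty)\sqrt{N}$ property (equivalently, the bound $\|w\|_\infty\le\sqrt{N}$) available before invoking the discretization result.
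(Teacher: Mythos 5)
Your proposal is correct and matches the paper's own derivation exactly: the paper states Corollary \ref{Cor2.1} as an immediate consequence of the Kiefer--Wolfowitz inequality \eqref{KW} combined with Theorem \ref{AT2} or Theorem \ref{AT3}, which is precisely your reduction. Your remark about fixing the Kiefer--Wolfowitz measure first so that the $NI(2,\infty)\sqrt{N}$ hypothesis is available is a sensible clarification but introduces nothing beyond what the paper intends.
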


Thus, in the case of real subspaces Corollary \ref{Cor2.1} is a direct corollary of known results (Theorem  \ref{AT2} and \cite{KW}). A standard simple argument derives from \eqref{KW} a similar inequality in the complex case with $\sqrt{N}$ replaced by $2\sqrt{N}$, which makes Corollary \ref{Cor2.1} to hold in the complex case as well. The authors of \cite{KPUU24} proved the exact analog of \eqref{KW} in the complex case, which required a non-trivial argument, and also generalized the result to the case of bounded functions instead of continuous ones. They also provided a stronger and sharper discretization result, compared to the simple combination of two known theorems in the form of Corollary \ref{Cor2.1}. In particular, they managed to prove the discretization inequality of the form \eqref{eq-bound-0} with $2N$ points in the case of bounded functions (instead of only continuous ones).

\begin{Remark}\label{AR4}
{\rm
If we use Corollary 2.5 from \cite{Kos22}
instead of Theorem \ref{AT1}, we get the following analog with $\varepsilon$ of the above theorem.
There is an absolute constant $C$ such that for any $\varepsilon\in(0, 1)$ and any
$N$-dimensional subspace $X_N$ of the complex $L_2(\Omega,\mu)$
there exists a set of $m\leq C\varepsilon^{-2} N$ points
$\xi^1,\ldots, \xi^m\in\Omega$ with the property:
For any $f\in X_N$ we have
$$
\|(|u_1|^2+\ldots+|u_N|^2)^{-1/2}f\|_\infty \le (1+\varepsilon)\max_j |f(\xi^j)|,
$$
where $\{u_1, \ldots, u_N\}$ is any orthonormal basis in $X_N$.
}
\end{Remark}

\section{Discretization and bilinear approximations}
\label{subDir}
It was pointed out in \cite{KKT} that sampling discretization of the uniform norm of functions from $X_N$ is connected with a special type of bilinear approximation of the Dirichlet kernel $\cD(X_N,\bx,\by)$ of this subspace. In this subsection we recall those results and
compare them with Theorems \ref{AT2} and \ref{AT3}.
For convenience we only discuss here the case of real functions.
To formulate Theorem \ref{LiT1} below we need to make some
preparations.

For an orthonormal system $\cU_N:=\{u_j(\bx)\}_{j=1}^N$  on $(\Og,\mu)$
we define the {\bf Dirichlet kernel} as follows
$$
\cD_N(\cU_N,\bx,\by) := \sum_{j=1}^N u_j(\bx)u_j(\by).
$$

It is known that the Dirichlet kernel $\cD_N(\cU_N,\bx,\by)$ does not depend on the orthonormal basis of a given subspace $X_N$, i.e.
for any two orthonormal bases $\cU_N$ and $\cV_N$ of
a given subspace $X_N$ we have
$$
\cD_N(\cU_N,\bx,\by) = \cD_N(\cV_N,\bx,\by).
$$
This observation shows that the Dirichlet kernel $\cD_N(\cU_N,\bx,\by)$ with $\cU_N$ being an orthonormal basis
of $X_N$ can be seen as a characteristic of the subspace $X_N$.
Denote
$$
\cD(X_N,\bx,\by) := \cD_N(\cU_N,\bx,\by).
$$

Consider the following problem of constrained best $M$-term approximation with respect to the bilinear dictionary. Define
\begin{align*}
\cB_M(X_N^\perp)&:= \Big\{\cW\,:\, \cW(\bx,\by)= \sum_{i=1}^M w_i(\bx)v_i(\by),\, w_i\in \cC,\,v_i\in L_1, i=1,\dots,M,\\
&\text{satisfying the condition: For any $f\in X_N$ and each $\bx\in \Omega$ we have}\\
&\int_{\Omega} \cW(\bx,\by) f(\by)d\mu =0\Big\}.
\end{align*}
For a function $\cK(\bx,\by)$ continuous on $\Og\times\Og$ consider
$$
\sigma_M^c(\cK)_{(\infty,1)}
:= \inf_{\cW\in \cB_M(X_N^\perp)} \sup_{\bx\in \Omega}\|\cK(\bx,\cdot) - \cW(\bx,\cdot)\|_1.
$$
Let now $\cW_M=\sum_{i=1}^M w_i^*v_i^*\in \cB_M(X_N^\perp)$ be such that
$$
\sup_{\bx\in \Omega}\|\D(X_N)(\bx,\cdot) - \cW_M(\bx,\cdot)\|_1 \le 2\sigma_M^c(\D(X_N))_{(\infty,1)} .
$$
Assume as above that $\{u_j\}_{j=1}^N$ is an orthonormal basis   with respect to the measure $\mu$ of the subspace $X_N$. Consider a subspace of $L_1(\Omega,\mu)$
$$
Y_S :=\sp \{v_j^*(\by)f(\by),  u_i(\by)f(\by)\,:\, f\in X_N, \, j=1,\dots,M, \, i=1,\dots,N\}.
$$
Then $S:=\dim Y_S \le (M+N)N$.

{\bf Condition D.} Suppose that $X_N$ is such that there exists a set of points $\{\xi^\nu\}_{\nu=1}^m$ and a set of positive weights $\{\la_\nu\}_{\nu=1}^m$ such that for any $g\in Y_S$
\be\label{df4}
\frac{1}{2} \|g\|_1 \le \sum_{\nu=1}^m \la_\nu |g(\xi^\nu)| \le \frac{3}{2}\|g\|_1 .
\ee

\begin{Theorem}[{\cite[Theorem 2.1]{KKT}}]\label{LiT1}  Let $X_N\subset \cC(\Omega)$ be an $N$-dimensional subspace. Assume that function $1$ belongs to $X_N$ (if not, we include it, which results in increase of dimension by $1$). Assume that $X_N$ satisfies Condition D. Then for the set of points $\{\xi^\nu\}_{\nu=1}^m$ from Condition D we have: For any $f\in X_N$
	$$
	\|f\|_\infty \le 6\sigma_M^c(\D(X_N))_{(\infty,1)}\left(\max_{\nu}|f(\xi^\nu)|\right).
	$$
\end{Theorem}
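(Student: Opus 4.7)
The plan is to combine the reproducing property of the Dirichlet kernel with the annihilation property built into $\cB_M(X_N^\perp)$ and two applications of Condition D. For any $f\in X_N$ the reproducing identity $f(\bx)=\int_\Omega \cD(X_N,\bx,\by)\,f(\by)\,d\mu(\by)$ follows from expanding $f$ in the orthonormal basis $\{u_j\}$. Since $\cW_M\in\cB_M(X_N^\perp)$ annihilates $X_N$ in the $\by$-variable, subtracting gives the starting identity
\[
f(\bx)=\int_\Omega\bigl[\cD(X_N,\bx,\by)-\cW_M(\bx,\by)\bigr]f(\by)\,d\mu(\by).
\]

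Next I would discretize this integral using the left inequality in \eqref{df4}. For each fixed $\bx$, the integrand $[\cD(X_N,\bx,\by)-\cW_M(\bx,\by)]f(\by)$, viewed as a function of $\by$, is a linear combination of the generators $u_i(\by)f(\by)$ and $v_j^*(\by)f(\by)$ of $Y_S$, so it lies in $Y_S$. Taking absolute values under the integral, applying the left inequality in \eqref{df4}, and pulling out $\max_\nu|f(\xi^\nu)|$ yields
\[
|f(\bx)|\le 2\Bigl(\max_\nu|f(\xi^\nu)|\Bigr)\sum_{\nu=1}^m\la_\nu\bigl|\cD(X_N,\bx,\xi^\nu)-\cW_M(\bx,\xi^\nu)\bigr|.
\]

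The final step is to bound the weighted sum on the right by $3\sigma_M^c(\cD(X_N))_{(\infty,1)}$ by applying the right inequality in \eqref{df4}. For this, the kernel difference $\cD(X_N,\bx,\cdot)-\cW_M(\bx,\cdot)$ itself must lie in $Y_S$ as a function of $\by$, and this is where the hypothesis $1\in X_N$ enters: taking $f=1$ in the defining span of $Y_S$ gives $u_i,v_j^*\in Y_S$, whence $\cD(X_N,\bx,\cdot)\in\sp\{u_i\}\subset Y_S$ and $\cW_M(\bx,\cdot)\in\sp\{v_j^*\}\subset Y_S$. Then
\[
\sum_{\nu=1}^m\la_\nu\bigl|\cD(X_N,\bx,\xi^\nu)-\cW_M(\bx,\xi^\nu)\bigr|\le\tfrac{3}{2}\bigl\|\cD(X_N,\bx,\cdot)-\cW_M(\bx,\cdot)\bigr\|_1\le 3\sigma_M^c(\cD(X_N))_{(\infty,1)},
\]
the last inequality being the defining property of $\cW_M$. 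Multiplying the two chains produces the stated factor $6\sigma_M^c(\cD(X_N))_{(\infty,1)}$.

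The only genuine obstacle is the bookkeeping about $Y_S$-membership: $Y_S$ has been engineered to contain simultaneously both the integrand, which varies with the given $f\in X_N$, and the fixed kernel difference, so that a single quadrature $\{\xi^\nu,\la_\nu\}$ serves for both applications of \eqref{df4}. The assumption $1\in X_N$, used to unlock the second of these memberships, is the subtle piece; the rest of the proof is essentially the two-sided sandwich inequality of Condition D.
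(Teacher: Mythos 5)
Your proof is correct and is exactly the argument that the construction is designed for: the paper only quotes this theorem from \cite{KKT} without reproducing its proof, but the definitions of $Y_S$ and Condition D are engineered precisely for your two applications of (\ref{df4}) — once to the function $\by\mapsto[\cD(X_N,\bx,\by)-\cW_M(\bx,\by)]f(\by)$ (left inequality) and once to $\by\mapsto\cD(X_N,\bx,\by)-\cW_M(\bx,\by)$ (right inequality, using $1\in X_N$), with the factors $2\cdot\tfrac32\cdot 2=6$ coming out as stated. You correctly identified the role of the hypothesis $1\in X_N$, which is the one non-obvious point of the bookkeeping.
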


\begin{Remark}[{\cite[Remark 2.1]{KKT}}]\label{LiR1} We can modify Condition D by replacing constants $1/2$ and $3/2$ in (\ref{df4}) by positive constants $c_0$ and $C_0$. Then Theorem \ref{LiT1} holds with the constant $6$ replaced by $C(c_0,C_0)$.
\end{Remark}

We note that Theorem \ref{LiT1} is a conditional result, which guarantees good discretization of the uniform norm
under a certain condition on the subspace $X_N$.
However, it provides a rather powerful tool to study
the uniform norm discretization.
We now combine Theorem \ref{LiT1} with the following result
from \cite{DKT}
(for an earlier result in this
direction see Theorem 2.3 \cite{DPSTT2}).

\begin{Theorem}[{\cite[Corollary 1.1]{DKT}}]\label{LiT2}
		There exists a positive absolute constant  $C$  such that for any subspace $X_N$ of $\cC(\Og)$  of dimension at most $N$,
for any  $\e \in (0,1)$ and any probability measure $\mu$,
		there is a finite set of points  $\{\xi^1,\dots,\xi^m\}\subset \Og$  with
		$$
		m\leq  C\e^{-2} N\log N,
		$$
and there is a set of nonnegative weights $\{\lambda_j\}_{j=1}^m$
		which provide the following discretization inequality for any $f\in X_N$
		\be\label{Li1}
	{(1-\e)\|f\|_{L_1(\Og,\mu)} \leq  \sum_{j=1}^m \lambda_j|f(\xi^j)| \leq  (1+\e)\|f\|_{L_1(\Og,\mu)}.}
		\ee
\end{Theorem}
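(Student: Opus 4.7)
The plan is to use a randomized construction based on Christoffel-type sampling, combined with a chaining/covering argument. Fix an $L_2(\Omega,\mu)$-orthonormal basis $\{u_j\}_{j=1}^N$ of $X_N$ and introduce the weight $w(\bx):=\sum_{j=1}^N|u_j(\bx)|^2$; since $\int w\,d\mu=N$, the function $\rho(\bx):=w(\bx)/N$ is a probability density on $\Omega$. Draw points $\xi^1,\dots,\xi^m$ i.i.d.\ from $\rho\,d\mu$ and assign the inverse-propensity weights $\lambda_j:=1/(m\rho(\xi^j))$. For every $f\in X_N$, the statistic $S_m(f):=\sum_{j=1}^m\lambda_j|f(\xi^j)|$ is then unbiased for $\|f\|_{L_1(\Omega,\mu)}$, which is exactly the quantity we wish to discretize.

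The second step is to establish concentration of $S_m(f)$ around $\|f\|_1$ for each fixed $f\in X_N$. The Christoffel bound $|f(\bx)|^2\le w(\bx)\|f\|_2^2$ (valid for all $f\in X_N$) gives $\bE\bigl[(|f(\xi)|/\rho(\xi))^2\bigr]=N\int|f|^2/w\,d\mu\le N\|f\|_2^2$, while a Nikolskii-type sup-norm estimate (combined, if necessary, with a mild truncation of $\rho$ that keeps it bounded away from zero) controls the summands pointwise on the order of $\sqrt{N}\|f\|_2$. Scalar Bernstein then yields, for any $f$ normalized by $\|f\|_1=1$, a bound of the form $\bP\bigl[|S_m(f)-1|>\e\bigr]\le 2\exp(-cm\e^2/N)$ for an absolute constant $c>0$.

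The main obstacle is promoting this pointwise concentration to a uniform estimate over the infinite family $\{f\in X_N:\|f\|_1=1\}$ while paying at most one logarithmic factor. I would handle this by a chaining argument: construct a nested sequence of $\e$-nets for the $L_1$-unit ball of $X_N$ in an appropriate norm controlling the empirical process increments, with cardinalities of order $(C/\e)^N$ by finite-dimensionality of $X_N$. A union bound at each scale, summed telescopically (or, equivalently, Dudley's entropy integral applied to the resulting empirical process), produces the extra $\log N$ factor in the exponent and forces the choice $m\ge C\e^{-2}N\log N$.

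The delicate point is this chaining step, because the raw summand $|f(\xi)|/\rho(\xi)$ is not uniformly bounded as $f$ varies and because the natural metric on $X_N$ governing the empirical fluctuations is neither the $L_1$-norm nor the $L_2$-norm but a mixed pseudo-norm built from $w$ and $\rho$. The technical heart of the proof is therefore a careful choice of truncation level and metric at each dyadic scale, together with matching entropy bounds—these are precisely the ingredients developed in \cite{DPSTT2} and refined in \cite{DKT}, and they are what allow the final point count to be $N\log N$ rather than $N^2$ or $(\log N)^2 N$.
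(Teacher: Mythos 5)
First, a point of order: the paper does not prove this statement at all --- Theorem \ref{LiT2} is imported verbatim from \cite[Corollary 1.1]{DKT}, so the only meaningful comparison is with the argument in that reference. Your overall architecture (importance sampling from a density, inverse-density weights, pointwise concentration, then chaining) is indeed the right template, but as written the proposal has two genuine gaps. The first is the choice of density. Sampling from the $L_2(\mu)$-Christoffel density $\rho=w/N$ gives the second-moment bound $\bE\bigl[(|f(\xi)|/\rho(\xi))^2\bigr]=N\int|f|^2/w\,d\mu\le N\|f\|_2^2$, which is a bound in terms of $\|f\|_2$, whereas the quantity you must concentrate around is $\|f\|_1$; for a general $X_N$ there is no control of $\|f\|_2$ by $\|f\|_1$, and your claimed estimate $\bP[|S_m(f)-1|>\e]\le 2\exp(-cm\e^2/N)$ for $\|f\|_1=1$ is simply false. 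Concretely, take $u_1=\sqrt{\eta}$ on $[0,1-\eta]$ plus a tall spike on $(1-\eta,1]$ normalizing it in $L_2$, and let $u_2,\dots,u_N$ be supported on $(1-\eta,1]$; then for $f=u_1$ one has $\int|f|^2/w\ge 1-\eta$ while $\|f\|_1^2\asymp\eta$, so after normalizing $\|f\|_1=1$ the variance of a single summand is of order $N/\eta$, which can exceed $N$ by an arbitrary factor. The correct first step, which is where \cite{DKT} (following \cite{DPSTT2}) actually starts, is a Lewis-type change of density \emph{adapted to $L_1$}: one replaces $\mu$ by $g\,d\mu$ and $f$ by $f/g$ so that the renormalized subspace satisfies $NI(1,2)\sqrt{CN}$ (hence $NI(1,\infty)CN$) with respect to the new measure; only after this reduction does Bernstein's inequality deliver the rate $\exp(-cm\e^2/N)$, and the weights $\lambda_j$ of the theorem are then the values $g(\xi^j)^{-1}/m$. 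The density $g$ is not the $L_2$-Christoffel function, and its existence is a nontrivial lemma.

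The second gap is the uniformization step, which you explicitly defer to \cite{DPSTT2} and \cite{DKT}. This is not a deferrable detail: a single-scale net of cardinality $(C/\e)^N$ combined with a union bound against $\exp(-cm\e^2/N)$ forces $m\gtrsim \e^{-2}N^2\log(1/\e)$, and even a full Dudley/chaining argument using the generic entropy bound $\e_k(X_N^1,L_\infty)\lesssim N2^{-k/N}$ (available after the change of density) only yields $m\asymp \e^{-2}N(\log N)^2$, which was precisely the pre-existing bound that \cite{DKT} improves upon. Removing the extra logarithm --- i.e., getting $N\log N$ --- is the actual content of \cite{DKT} and requires their refined analysis of the underlying Bernoulli/empirical process; it does not follow from the ingredients you list. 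So while your plan correctly identifies the shape of the argument, the two steps you treat as routine (the variance-compatible density and the single-logarithm chaining bound) are exactly the two places where the cited proof does real work, and neither is supplied.
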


Combination of Theorems \ref{LiT1} and \ref{LiT2} implies the following unconditional result.

\begin{Theorem}\label{LiT3}  Let $X_N\subset \cC(\Omega)$ be an $N$-dimensional subspace, which contains function $1$. There are two absolute constants $C_1$, $C_2$ such that for any nonnegative integer $M$   there exists a   set of $m$ points $\{\xi^\nu\}_{\nu=1}^m$, $m\le C_1 (M+N)N\log ((M+N)N)$, with the property: For any $f\in X_N$
	$$
	\|f\|_\infty \le C_2\sigma_M^c(\D(X_N))_{(\infty,1)}\left(\max_{\nu}|f(\xi^\nu)|\right)  .
	$$
\end{Theorem}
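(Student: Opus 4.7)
The plan is to combine the conditional discretization result (Theorem \ref{LiT1}) with the unconditional $L_1$ sampling discretization (Theorem \ref{LiT2}) applied to the auxiliary space $Y_S$ built from a near-optimal bilinear approximant of the Dirichlet kernel. All the heavy lifting is already inside those two theorems; the work is in assembling the pieces so that the constants and dimension counts line up.

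First I would fix an orthonormal basis $\{u_j\}_{j=1}^N$ of $X_N$ and, by definition of $\sigma_M^c(\D(X_N))_{(\infty,1)}$, select
$\cW_M = \sum_{i=1}^M w_i^* v_i^* \in \cB_M(X_N^\perp)$ such that
$$
\sup_{\bx\in\Omega}\|\D(X_N)(\bx,\cdot)-\cW_M(\bx,\cdot)\|_1 \le 2\sigma_M^c(\D(X_N))_{(\infty,1)}.
$$
Next I would form the subspace
$$
Y_S := \sp\{v_j^*(\by)f(\by),\, u_i(\by)f(\by)\,:\, f\in X_N,\ j=1,\dots,M,\ i=1,\dots,N\}
$$
exactly as in the hypothesis of Theorem \ref{LiT1}. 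Since each of the $M+N$ generators is multiplied by an element of the $N$-dimensional space $X_N$, one gets the a priori bound $S := \dim Y_S \le (M+N)N$.

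Now I would apply Theorem \ref{LiT2} to the (at most) $S$-dimensional subspace $Y_S\subset L_1(\Omega,\mu)$ with parameter $\e = 1/2$. This provides points $\{\xi^\nu\}_{\nu=1}^m\subset\Omega$ and nonnegative weights $\{\la_\nu\}_{\nu=1}^m$, with
$$
m \le C\e^{-2} S\log S \le 4C (M+N)N\log\bigl((M+N)N\bigr),
$$
such that for every $g\in Y_S$
$$
\tfrac12 \|g\|_1 \le \sum_{\nu=1}^m \la_\nu |g(\xi^\nu)| \le \tfrac32 \|g\|_1.
$$
This is exactly Condition D for the space $X_N$ with the bilinear approximant $\cW_M$ chosen above.

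Finally, since $1\in X_N$, Theorem \ref{LiT1} applies and yields
$$
\|f\|_\infty \le 6\sigma_M^c(\D(X_N))_{(\infty,1)} \max_{\nu}|f(\xi^\nu)|\quad\forall f\in X_N,
$$
which gives the claim with $C_1 = 4C$ and $C_2 = 6$ absolute. There is no real obstacle: the only thing to be careful about is that the same set $\{\xi^\nu\}$ must simultaneously discretize all products $u_i f$ and $v_j^* f$ appearing in the proof of Theorem \ref{LiT1}; this is automatic once we apply Theorem \ref{LiT2} to the single fixed subspace $Y_S$ that already contains all such products by construction.
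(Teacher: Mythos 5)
Your proposal is correct and follows essentially the same route as the paper: choose a near-optimal $\cW_M$, form $Y_S$ with $S\le (M+N)N$, apply Theorem \ref{LiT2} with $\e=1/2$ to obtain Condition D with $m\le 4CS\log S$, and conclude via Theorem \ref{LiT1}. No gaps.
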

\begin{proof}
For the subspace $Y_S$ defined above the Condition D we apply Theorem \ref{LiT2} with $\e=1/2$.
It provides the set of points $\{\xi^\nu\}_{\nu=1}^m$ satisfying inequalities (\ref{df4}) of the Condition D with
$m \le 4CS\log S$, where $C$ is from Theorem \ref{LiT2}. Taking into account the fact that $S\le (M+N)N$ and applying Theorem \ref{LiT1} we complete the proof.
\end{proof}

\begin{Remark} In \cite{KKT} we studied a problem closely connected with the problem of
estimation of the quantities  $\sigma_M^c(\D_{X_N})_{(\infty,1)}$ in a special case of trigonometric polynomials. In this case $X_N$ is a subspace $\Tr(Q)$ of trigonometric polynomials with frequencies from $Q\subset \Z^d$, $|Q|=N$. Then $\Omega = [0,2\pi]^d$, $\mu$ is the normalized Lebesgue measure, and for $\bx,\by\in [0,2\pi]^d$ we have

$$
\D_{\Tr(Q)}(\bx,\by) = \D_Q(\bx-\by),\qquad    \D_Q(\bx):= \sum_{\bk\in Q} e^{i(\bk,\bx)}.
$$

Clearly, for any  $g\in \Tr(\La)$, $Q\cap \Lambda =\emptyset$, $|\Lambda|=M$, we have

$$
\sigma_M^c(\D_{\Tr(Q)})_{(\infty,1)} \le \|\D_Q -g\|_1.
$$

This motivated us to construct generalized de la Vall{\'e}e Poussin kernels for sets $Q$. Namely, for a given $Q\subset \Z^d$ and $M\in\N$ we were interested in construction of an $M$-term trigonometric polynomial $\V_{Q,M}$ such that $\hat \V_{Q,M}(\bk) =1$ for $\bk \in Q$ with small $L_1$ norm. In particular, we proved in \cite{KKT}  that for any $Q\subset \Z^d$ there exists $\V_{Q,M}$ such that $\|\V_{Q,M}\|_1 \le 2$ 
provided $M\ge 2^{4|Q|}$. Also, we studied there the following question. Find necessary and sufficient conditions on $M$, which guarantee existence of $\V_{Q,M}$ with the property $\|\V_{Q,M}\|_1 \le C_1|Q|^\alpha$, $\alpha \in [0,1/2)$, for all $Q$ of cardinality $n$. These conditions are given in \cite{KKT}. 
Roughly, they state that $\log M$ should be of order $n^{1-2\alpha}$.
\end{Remark}

At the end of this section we show that the quantity
$\sigma_M^c(\cK)_{(\infty,1)}$ can be calculated in a slightly
different manner.
We consider another quantity
$$
\sigma_M^\perp(\cK)_{(\infty,1)}:= \inf_{\substack{\{v_i\}, \{w_i\}, v_i\in X_N^\perp, w_i\in \cC(\Og),\\ i=1,\dots,M}}\,\, \sup_{\bx\in\Og} \|\cK(\bx,\cdot)- \sum_{i=1}^M w_i(\bx)v_i(\cdot)\|_1.
$$

\begin{Proposition}\label{dfP2} We have for $\cK\in \cC(\Og\times\Og)$
	$$
	\sigma_M^c:=\sigma_M^c(\cK)_{(\infty,1)}=\sigma_M^\perp(\cK)_{(\infty,1)}=:\sigma_M^\perp.
	$$
\end{Proposition}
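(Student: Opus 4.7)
The plan is to establish the two inequalities separately; both should follow essentially from unwinding the definitions.

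The direction $\sigma_M^c \le \sigma_M^\perp$ is immediate: any representation $\cW(\bx,\by) = \sum_{i=1}^M w_i(\bx) v_i(\by)$ with $v_i \in X_N^\perp$ automatically satisfies $\int_\Omega \cW(\bx,\by) f(\by)\,d\mu(\by) = \sum_i w_i(\bx)\int v_i f\,d\mu = 0$ for every $f \in X_N$ and every $\bx$, so it lies in $\cB_M(X_N^\perp)$ and is admissible for $\sigma_M^c$. Taking the infimum on both sides yields the desired inequality.

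The substantive direction is $\sigma_M^\perp \le \sigma_M^c$. My plan is to show that any $\cW \in \cB_M(X_N^\perp)$ can be rewritten, \emph{without changing the function itself}, in the form required for $\sigma_M^\perp$. To this end, fix an orthonormal basis $\{u_1,\dots,u_N\}$ of $X_N$ in $L_2(\Omega,\mu)$ and decompose each second-factor as $v_i = v_i^{\|} + v_i^{\perp}$, where $v_i^{\|} := \sum_{j=1}^N \langle v_i, u_j\rangle u_j \in X_N$ and $v_i^{\perp} := v_i - v_i^{\|} \in X_N^\perp$. This decomposition is well-defined because each $u_j$ is continuous on the compact set $\Omega$ and hence bounded, while $v_i \in L_1$, so the pairings $\langle v_i, u_j\rangle = \int v_i u_j\,d\mu$ make sense. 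The key step is to apply the defining condition of $\cB_M(X_N^\perp)$ with the test function $f = u_j$: it yields $\sum_{i=1}^M w_i(\bx)\langle v_i, u_j\rangle = 0$ for every $\bx \in \Omega$ and every $j = 1,\dots,N$. Substituting back one obtains
\[
\sum_{i=1}^M w_i(\bx) v_i^{\|}(\by) = \sum_{j=1}^N \Big(\sum_{i=1}^M w_i(\bx)\langle v_i, u_j\rangle\Big) u_j(\by) \equiv 0,
\]
so $\cW(\bx,\by) = \sum_{i=1}^M w_i(\bx) v_i^{\perp}(\by)$ is an admissible representation for $\sigma_M^\perp$ with the same sup-norm error slice by slice; taking infima gives the inequality.

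I do not foresee any real obstacle. The proposition is essentially an algebraic identity coupled with the defining condition of $\cB_M(X_N^\perp)$. The only point requiring a little care is to verify that the orthogonal projection onto $X_N$ is well-defined for $L_1$ functions, which is immediate from the boundedness of continuous functions on compact $\Omega$.
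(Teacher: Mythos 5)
Your proof is correct and follows essentially the same route as the paper: the easy inclusion $\cB_M$-admissibility of perpendicular representations for one direction, and for the other the projection $v_i = P_{X_N}(v_i) + h_i$ with the observation that the defining condition of $\cB_M(X_N^\perp)$ (tested against the basis functions $u_j$) kills the $X_N$-component of the kernel without changing $\cW$ itself. Your explicit verification that the pairings $\langle v_i, u_j\rangle$ make sense for $v_i\in L_1$ matches the paper's remark that $X_N\subset\cC(\Og)$ justifies the projection.
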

\begin{proof} For any $\{v_i\}, \{w_i\}, v_i\in X_N^\perp\cap L_1(\Og,\mu), w_i\in \cC(\Og), i=1,\dots,M$ we obtain
	that
	$$
	\sum_{i=1}^M w_i(\bx)v_i(\by) \in \cB(X_N^\perp).
	$$
	Therefore, $\sigma_M^c \le \sigma_M^\perp$. We now prove the inverse inequality.
	Let
	$$
	\cW= \sum_{i=1}^M w_i(\bx)v_i(\by) \in \cB(X_N^\perp).
	$$
	Our assumption that $X_N\subset \cC(\Og)$ implies that we can use an orthogonal projection of   $v_i\in L_1$ onto $X_N$
	$$
	P_{X_N}(v_i) := \sum_{j=1}^N \<v_i,u_j\>u_j,\quad h_i := v_i - P_{X_N}(v_i),\,\, i=1,\dots,M,
	$$
	for an orthonormal basis $\{u_i\}_{i=1}^N$ of $X_N$.
	Then
	$$
	\cW(\bx,\by) = \sum_{i=1}^N b_i(\bx)u_i(\by) + \sum_{i=1}^{M} {c_i}(\bx)h_i(\by),\quad b_i,c_i \in \cC(\Og).
	$$
	Our assumption $\cW \in \cB(X_N^\perp)$ implies that $b_i(\bx)=0$ for all $\bx\in\Og$ and $i=1,\dots,N$. Therefore,
	$$
	\cW(\bx,\by) =   \sum_{i=1}^{M} c_i(\bx)h_i(\by),\quad h_i \in X_N^\perp,\quad i=1,\dots,M,
	$$
	which implies $\sigma_M^\perp \le \sigma_M^c$.
\end{proof}

\section{Some results on the Bernstein discretization problem}
\label{BD}

In Section \ref{A} we proved some discretization results in a weaker than inequality (\ref{BI1}) form. This
allowed us to obtain optimal in the sense of order bounds on $m$, namely, $m\le CN$. In this section we
would like to obtain discretization in the form of (\ref{BI1}).
It has already been discussed
that the natural Nikol'skii inequality $NI(2,\infty)BN^{1/2}$ does not imply a good discretization result -- minimal $m$ may grow exponentially
in dimension $N$ (see Theorem \ref{IT2} in Section~\ref{I}). In this section we impose
more restrictive assumptions in terms of entropy numbers and Nikol'skii-type inequality.
Our goal in this section is to study how discretization results depend on the parameter $q$ of integrability
and then to obtain Bernstein-type discretization choosing suitable parameter $q$.

{\bf 1a. Entropy assumption. Introduction.}

Denote
$$
X^q_N := \{f\in X_N:\, \|f\|_q \le 1\}.
$$
We also recall the definitions of the covering numbers and the entropy numbers.
Let $X$ be a Banach space and let $B_X$ denote the unit ball of $X$ with the center at $0$. Denote by $B_X(y,r)$ a ball with center $y$ and radius $r$: $\{x\in X:\|x-y\|\le r\}$. For a compact set $A$ and a positive number $\e$ we define the covering number $N_\e(A)$
as follows
$$
N_\e(A) := N_\e(A,X)
:=\min \{n : \exists y^1,\dots,y^n, y^j\in A :A\subseteq \cup_{j=1}^n B_X(y^j,\e)\}.
$$
It is convenient to consider along with the entropy $H_\e(A,X):= \log_2 N_\e(A,X)$ the entropy numbers $\e_k(A,X)$:
$$
\e_k(A,X)  :=\inf \{\e : \exists y^1,\dots ,y^{2^k} \in A : A \subseteq \cup_{j=1}
^{2^k} B_X(y^j,\e)\}.
$$
In our definition of $N_\e(A)$ and $\e_k(A,X)$ we require $y^j\in A$. In a standard definition of $N_\e(A)$ and $\e_k(A,X)$ this restriction is not imposed.
However, it is well known (see \cite{VTbook}, p.208) that these characteristics may differ at most by a factor $2$.

We prove below in Subsection {\bf 1b} the following conditional result.
\begin{Theorem}\label{T2.1} Let $1<q<\infty$. Suppose that a subspace $X_N$ satisfies the condition
$$
\e_k(X^q_N,L_\infty) \le  B\left\{\begin{array}{ll}  (N/k)^{1/q}, &\quad k\le N,\\
 2^{-k/N},&\quad k\ge N,\end{array} \right.
$$
with $B$ satisfying the condition $B\ge 1$.
Then for large enough absolute constant $C$ there exists a set of
$$
m \le Cq4^qN^{2-1/q}B^{q}(\log_2(2N\log (4(4B)^qNq)))^2
$$
 points $\xi^j\in \Omega$, $j=1,\dots,m$,   such that for any $f\in X_N$
we have
$$
\frac{1}{2}\|f\|_q^q \le \frac{1}{m}\sum_{j=1}^m |f(\xi^j)|^q \le \frac{3}{2}\|f\|_q^q.
$$
\end{Theorem}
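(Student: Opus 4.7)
My strategy is to draw i.i.d.\ sample points and combine Bernstein's inequality with an $L_\infty$-chaining argument over the entropy nets supplied by the hypothesis. The first step is to extract a Nikol'skii-type inequality from the entropy bound: at $k=1$ the hypothesis gives $\e_1(X_N^q,L_\infty)\le BN^{1/q}$, so $X_N^q$ is covered by two $L_\infty$-balls of radius $BN^{1/q}$, which (together with $0\in\overline{X_N^q}$ by homogeneity) yields $\|f\|_\infty\le 2BN^{1/q}\|f\|_q$ for every $f\in X_N$, and consequently $\mathbb{E}|f|^{2q}\le\|f\|_\infty^q\|f\|_q^q\le(2B)^qN$ when $\|f\|_q=1$.

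Next, pick $\xi^1,\dots,\xi^m$ i.i.d.\ from $\mu$ and set $Z_j(f):=|f(\xi^j)|^q$. For a single $f\in X_N^q$, Bernstein's inequality (with both $\|Z_j(f)\|_\infty$ and $\mathbb{E}Z_j(f)^2$ bounded by $(2B)^qN$) gives, for $t\in(0,1]$,
\[
\mathbb{P}\Bigl(\Bigl|\tfrac1m\sum_{j=1}^m Z_j(f)-1\Bigr|>t\Bigr)\le 2\exp\Bigl(-\tfrac{cmt^2}{(2B)^qN(1+t)}\Bigr).
\]

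To upgrade to a uniform bound, I chain over the entropy nets. Take an $\e_k$-net $T_k\subset X_N^q$ of cardinality $\le 2^k$ and let $\pi_k(f)$ be the nearest element; set $\Delta_k(f):=|\pi_k(f)|^q-|\pi_{k-1}(f)|^q$. Choose a truncation $K\sim N\log(qB^qN)$ large enough that, via $\|\pi_k\|_\infty\le 3BN^{1/q}$ and $||a|^q-|b|^q|\le q\max(|a|,|b|)^{q-1}|a-b|$, the residual $|f|^q-|\pi_K(f)|^q$ is pointwise at most $1/4$. Telescoping
\[
|f|^q-1=\bigl(|\pi_0(f)|^q-\mathbb{E}|\pi_0(f)|^q\bigr)+\sum_{k=1}^K\bigl(\Delta_k-\mathbb{E}\Delta_k\bigr)+R_K,
\]
I apply Bernstein to each centered increment and union-bound over the $|T_k\times T_{k-1}|\le 4^k$ possible pairs with a deviation budget $t_k$ summing to $\le 1/4$. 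Substituting the two-regime bounds $\e_k\le B(N/k)^{1/q}$ for $k\le N$ and $\e_k\le B2^{-k/N}$ for $k\ge N$, the dominant contribution comes from $k$ near $N$.

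\textbf{Main obstacle.} The crux is recovering the sharp exponent $N^{2-1/q}$ rather than the $N^2$ from a naive single-scale union bound. The gain must come from a refined second-moment estimate for the chain increments. Using monotonicity of $L_p$-norms together with the Nikol'skii inequality from the first step, one bounds
\[
\mathbb{E}|\pi_k|^{2q-2}\le \|\pi_k\|_\infty^{q-2}\|\pi_k\|_q^q\lesssim B^{q-2}N^{(q-2)/q}
\]
for $q\ge 2$ (a separate interpolation covers $1<q<2$), so $\mathbb{E}\Delta_k^2\lesssim q^2 B^{q-2}N^{(q-2)/q}\e_{k-1}^2$, strictly smaller than the pointwise bound $\|\Delta_k\|_\infty^2$. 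Combining this with the entropy decay $\e_{k-1}\le B(N/k)^{1/q}$ and summing carefully across the two regimes (the sum over $k\ge N$ is geometric and negligible; the sum over $k\le N$ is controlled by the integral $\int_1^N k^{1/2-1/q}\,dk$) produces $m\le Cq4^qN^{2-1/q}B^q\log^2(\cdot)$. The double-log factor is an artifact of the truncation level $K\sim N\log(\cdot)$, and the $4^q$ constant comes from the $q$-th power expansions of the Nikol'skii bound appearing in both the variance and the $L_\infty$ bound on each increment.
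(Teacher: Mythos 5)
Your overall architecture (a Nikol'skii inequality extracted from the $k=1$ entropy bound, i.i.d.\ sample points, Bernstein-type concentration for chain increments over the entropy nets, truncation at $K\sim N\log(\cdot)$) is the same as the paper's, and several ingredients are sound: the Lipschitz estimate $||a|^q-|b|^q|\le q\max(|a|,|b|)^{q-1}|a-b|$, the choice of the truncation level, and the moment bound $\mathbb{E}|h|^{2q-2}\le\|h\|_\infty^{q-2}\|h\|_q^q$ are all correct. However, there is a genuine quantitative gap in the chaining scheme, and it is fatal to the claimed bound on $m$. You chain at \emph{every} integer level $k=1,\dots,K$ with nets $T_k$ of cardinality $2^k$, union-bounding over $\le 4^k$ pairs at level $k$ and requiring the deviation budgets $t_k$ to sum to $1/4$. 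To beat the cardinality $4^k$, Bernstein's inequality forces $t_k\gtrsim kM_k/m$ and $t_k\gtrsim \sqrt{k}\,\sigma_k/\sqrt{m}$, where $M_k=\|\Delta_k\|_\infty\lesssim q(4B)^{q-1}N^{1-1/q}\e_{k-1}$. Since consecutive $\e_k$'s are essentially equal, summing these minimal budgets over all $K$ levels overcounts badly: $\sum_{k\le N}kM_k\asymp q4^qB^qN\sum_{k\le N}k^{1-1/q}\asymp q4^qB^qN^{3-1/q}$, so the sub-exponential constraint alone forces $m\gtrsim q4^qB^qN^{3-1/q}$, a full factor of $N$ worse than the theorem; the sub-Gaussian constraint $m\gtrsim\bigl(\sum_k\sqrt{k}\,\sigma_k\bigr)^2$ is of order $N^{4-2/q}$ and is worse still. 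No choice of the $t_k$ avoids this.

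The paper repairs exactly this point by chaining only at doubling scales: nets ${\mathcal N}_j$ at scale $\delta_j=\e_{2^j}$ of cardinality at most $2^{2^j}$, so that there are only $J\le\log(2N\log(4(4B)^qNq))$ levels, the budget is the uniform $\eta_j=1/(8J)$, and the level-$j$ requirement $m\eta_j^2/(16qM_q^{q-1}\delta_{j-1})\gtrsim 2^j$ is a \emph{maximum} over levels rather than a sum; it peaks at $2^j\asymp N$ and yields precisely $m\gtrsim q4^qB^qN^{2-1/q}J^2$. This also shows that two of your attributions are off. The factor $(\log(\cdot))^2$ is $J^2$, the square of the number of dyadic levels (not an artifact of the truncation alone), and the exponent $2-1/q$ comes from the sub-exponential term $2^j\cdot qM_q^{q-1}\delta_{j-1}$ evaluated at $2^j\asymp N$: the paper uses the Bourgain--Lindenstrauss--Milman inequality (Lemma \ref{AL1}), whose variance proxy is $\|g\|_1\|g\|_\infty\le 2M$ with $\|g\|_1\le 2$ available for free, so no genuine second-moment estimate is needed. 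Your refined variance bound is correct but, once dyadic chaining is in place, it only improves the non-binding sub-Gaussian term to $N^{2-2/q}$; it is not the source of the exponent $2-1/q$. Replacing your integer-indexed nets by the dyadic ones (and keeping the rest of your plan) gives the paper's proof.
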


We are interested in applications of Theorem \ref{T2.1} for large $q$.
In the case $q=1$ Theorem \ref{T2.1} is proved in \cite{VT159}. Our proof of Theorem \ref{T2.1} goes along the lines of the proof from \cite{VT159}.
Note that it is known (see, for instance, \cite{DPSTT2}) that the entropy condition
$\e_1(X^q_N,L_\infty) \le BN^{1/q}$ implies the Nikol'skii inequality $NI(q,\infty)4BN^{1/q}$.
Thus, results of Subsection {\bf 2} below about discretization under the Nikol'skii inequality
assumption imply the corresponding corollaries for discretization under
the entropy assumption. However, more restrictive entropy condition
will result in slightly better bounds for the number of points $m$, sufficient for a good discretization. Thus,
we present in this subsection a direct proof of discretization results under
the entropy assumption. This proof is based on a straightforward application
of the chaining argument.
Here is a direct corollary of Theorem \ref{T2.1}.

\begin{Corollary}\label{BDC1} Under conditions of Theorem \ref{T2.1} there exist two positive constants
$c(B)$ and $C(B)$ such that there are $m\le N^{c(B)}$ points $\xi^1,\dots,\xi^m$ such that
\be\label{BD1}
\|f\|_\infty\le C(B)\max\limits_{1\le \nu\le m}|f(\xi^\nu)|\quad \forall f\in X_N.
\ee
\end{Corollary}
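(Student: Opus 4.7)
The approach is to apply Theorem~\ref{T2.1} with a value of $q$ that grows logarithmically in $N$, and then convert the resulting $L_q$ discretization into the uniform-norm bound (\ref{BD1}) by invoking the Nikol'skii inequality that, as noted in the text just before the corollary, is already implied by the $k=1$ case of the entropy hypothesis. The free parameter $q$ in Theorem~\ref{T2.1} makes the proof essentially a one-parameter optimization.

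Concretely, I would take $q:=\lceil\log N\rceil$ (small $N$ is trivial). This choice makes $N^{1/q}\le e$ while keeping $q$ itself logarithmic. Applying Theorem~\ref{T2.1} at this $q$ produces a set of $m$ points with
\[
\frac{1}{2}\|f\|_q^q\le \frac{1}{m}\sum_{j=1}^m|f(\xi^j)|^q\le \frac{3}{2}\|f\|_q^q,\qquad f\in X_N,
\]
and in particular $\|f\|_q\le 2^{1/q}\max_\nu|f(\xi^\nu)|$. The Nikol'skii inequality $\|f\|_\infty\le 4BN^{1/q}\|f\|_q$ (which follows from $\e_1(X_N^q,L_\infty)\le BN^{1/q}$) then gives
\[
\|f\|_\infty\le 4B\cdot 2^{1/q}N^{1/q}\max_\nu|f(\xi^\nu)|\le 8eB\max_\nu|f(\xi^\nu)|,
\]
which is (\ref{BD1}) with $C(B):=8eB$.

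It remains to check that $m$ stays polynomial in $N$. Substituting $q\asymp \log N$ into the bound
\[
m\le Cq\,4^q N^{2-1/q}B^q\bigl(\log_2(2N\log(4(4B)^q Nq))\bigr)^2
\]
provided by Theorem~\ref{T2.1}, one gets $4^q=N^{\log 4}$, $B^q=N^{\log B}$, $N^{2-1/q}\le N^2$, while the remaining factor $q\bigl(\log(\cdots)\bigr)^2$ is only polylogarithmic in $N$ and is absorbable into an arbitrarily small extra power. Altogether $m\le N^{c(B)}$ for some $c(B)$ depending only on $B$ (essentially $c(B)=2+\log 4+\log B$, with a small additive slack for the polylog factor).

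No real obstacle arises: the only point requiring any care is to verify that $q=\lceil\log N\rceil$ simultaneously keeps $N^{1/q}=O(1)$ (needed for the $L_\infty$ step) and keeps $4^q,\,B^q$ polynomial in $N$ (needed for the bound on $m$), and both hold manifestly for this choice.
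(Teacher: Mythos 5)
Your proof is correct and is exactly the intended derivation: the paper states Corollary \ref{BDC1} as a ``direct corollary'' of Theorem \ref{T2.1}, and the implicit argument is precisely your choice $q\asymp\log N$ combined with the Nikol'skii inequality \eqref{2.3} that the entropy hypothesis supplies. The bookkeeping ($N^{1/q}=O(1)$, $4^qB^q=N^{O_B(1)}$, polylogarithmic remainder) is all as the paper intends.
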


The following  Lemma \ref{AL1}  is from \cite{BLM}.

\begin{Lemma}\label{AL1} Let $\{g_j\}_{j=1}^m$ be independent random variables with $\bE g_j=0$, $j=1,\dots,m$, which satisfy
$$
\|g_j\|_1\le 2,\qquad \|g_j\|_\infty \le M,\qquad j=1,\dots,m.
$$
Then for any $\eta \in (0,1)$ we have the following bound on the probability
$$
\bP\left\{\left|\sum_{j=1}^m g_j\right|\ge m\eta\right\} < 2\exp\left(-\frac{m\eta^2}{8M}\right).
$$
\end{Lemma}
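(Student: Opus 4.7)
The plan is to apply the standard exponential moment (Chernoff) method, together with a sub-Gaussian bound on the moment generating function of each $g_j$ that exploits both the $L_1$ and $L_\infty$ hypotheses. The key observation that ties these two hypotheses together is that
$$
\bE g_j^2 \le \|g_j\|_\infty \cdot \bE|g_j| \le 2M,
$$
so the $L_1$ bound supplies a variance bound once we combine it with the almost-sure bound.

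The next step is to estimate $\bE e^{t g_j}$ for small $t$. Using the elementary inequality $e^x \le 1 + x + x^2$ valid for $|x|\le 1$, together with $|t g_j|\le 1$ whenever $|t|\le 1/M$, I get
$$
\bE e^{t g_j} \le 1 + t\,\bE g_j + t^2\,\bE g_j^2 = 1 + t^2\,\bE g_j^2 \le 1 + 2Mt^2 \le e^{2Mt^2}
$$
since $\bE g_j = 0$. By independence,
$$
\bE \exp\Bigl(t\sum_{j=1}^m g_j\Bigr) = \prod_{j=1}^m \bE e^{tg_j} \le e^{2Mm t^2},\qquad |t|\le 1/M.
$$

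Now I apply Markov's inequality to the one-sided tail: for $0<t\le 1/M$,
$$
\bP\Bigl\{\sum_{j=1}^m g_j \ge m\eta\Bigr\} \le e^{-tm\eta}\,\bE\exp\Bigl(t\sum_{j=1}^m g_j\Bigr) \le \exp\bigl(-tm\eta + 2Mmt^2\bigr).
$$
Optimizing over $t$ leads to $t=\eta/(4M)$, which satisfies the constraint $t\le 1/M$ because $\eta\in(0,1)$. Plugging this choice in gives exponent $-m\eta^2/(4M) + m\eta^2/(8M) = -m\eta^2/(8M)$, so
$$
\bP\Bigl\{\sum_{j=1}^m g_j \ge m\eta\Bigr\} < \exp\Bigl(-\frac{m\eta^2}{8M}\Bigr).
$$
The opposite tail is obtained by applying the same argument to $\{-g_j\}$ (which again has mean zero and the same $L_1$ and $L_\infty$ bounds), and combining the two tails by the union bound yields the claimed factor of $2$.

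The argument is entirely routine; the only delicate point is keeping track of the constraint $|t|\le 1/M$ so that the quadratic upper bound on $e^x$ is applicable, and this is precisely why the restriction $\eta\in(0,1)$ is imposed in the statement. If one wanted to avoid that restriction, one would instead use the Bennett/Bernstein inequality with the variance bound $2Mm$ and sup-norm bound $M$, but since the statement only needs $\eta\in(0,1)$ the simple quadratic bound is enough and gives the stated constant $8M$ cleanly.
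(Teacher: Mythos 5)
The paper does not prove this lemma at all: it is quoted verbatim from Bourgain--Lindenstrauss--Milman \cite{BLM}, so there is no in-paper argument to compare against. Your proof is a correct, self-contained derivation by the standard exponential-moment (Bernstein/Chernoff) method: the variance bound $\bE g_j^2\le \|g_j\|_\infty\,\bE|g_j|\le 2M$, the quadratic bound $e^x\le 1+x+x^2$ for $|x|\le 1$, the resulting mgf bound $e^{2Mmt^2}$ for $|t|\le 1/M$, and the optimizer $t=\eta/(4M)$ (feasible precisely because $\eta<1<4$) all check out and yield the exponent $-m\eta^2/(8M)$ exactly. The only cosmetic quibble is that Markov's inequality gives $\le$ rather than the strict $<$ asserted in the statement, but that is immaterial.
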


We consider measurable functions $f(\bx)$, $\bx\in \Omega$. Let $\mu$ be a probability measure on $\Omega$. Denote $\mu^m := \mu\times\cdots\times\mu$ the probability measure on $\Omega^m := \Omega\times\cdots\times\Omega$.
  For $1\le q<\infty$ define
$$
L^q_\bz(f) := \frac{1}{m}\sum_{j=1}^m |f(\bx^j)|^q -\|f\|_q^q,\qquad \bz:= (\bx^1,\dots,\bx^m).
$$

We need the following inequality (see Proposition 3.1 from \cite{VT158}).
\begin{Proposition}\label{IP1} Let $f_i\in L_q(\Omega)$, $1<q<\infty$, be such that
$$
\|f_i\|_q^q \le 1/2,\quad \|f_i\|_\infty \le M_q, \quad i=1,2;\qquad \|f_1-f_2\|_\infty \le \delta.
$$
Then
\be\label{I1}
\mu^m\{\bz: |L^q_\bz(f_1) -L^q_\bz(f_2)| \ge \eta\} < 2\exp\left(-\frac{m\eta^2}{16qM_q^{q-1}\delta}\right).
\ee
\end{Proposition}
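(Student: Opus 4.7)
The plan is to write the random difference $L^q_\bz(f_1)-L^q_\bz(f_2)$ as a normalized sum of independent mean-zero random variables and feed the result into the concentration inequality provided by Lemma \ref{AL1}. Concretely, for $\bz=(\bx^1,\dots,\bx^m)$ I would set
$$g_j(\bz) := \bigl(|f_1(\bx^j)|^q - |f_2(\bx^j)|^q\bigr) - \bigl(\|f_1\|_q^q - \|f_2\|_q^q\bigr),\qquad j=1,\dots,m.$$
Each $g_j$ depends only on $\bx^j$, so the $\{g_j\}$ are independent; they are centered by construction; and a trivial rewriting gives $L^q_\bz(f_1)-L^q_\bz(f_2)=\frac{1}{m}\sum_{j=1}^m g_j$. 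Thus the event in \eqref{I1} is precisely $\bigl\{\bigl|\sum_j g_j\bigr|\ge m\eta\bigr\}$, which is what Lemma \ref{AL1} is designed to bound.

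The core of the argument is verifying the two hypotheses of Lemma \ref{AL1} with $M:=2qM_q^{q-1}\delta$. For the uniform bound I would invoke the elementary mean-value inequality
$$\bigl||a|^q-|b|^q\bigr|\le q\max(|a|,|b|)^{q-1}|a-b|,$$
valid for $q>1$ (it follows from applying the mean value theorem to $t\mapsto t^q$). Combined with the pointwise hypotheses $|f_i|\le M_q$ and $|f_1-f_2|\le\delta$, this gives $\bigl\||f_1|^q-|f_2|^q\bigr\|_\infty\le qM_q^{q-1}\delta$, and therefore $\|g_j\|_\infty\le 2qM_q^{q-1}\delta=M$ after absorbing the centering constant. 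For the $L^1$ bound I would deliberately \emph{not} reuse the mean-value inequality (which would introduce a spurious factor of $\delta$), and instead invoke the crude triangle inequality $\bigl||f_1|^q-|f_2|^q\bigr|\le|f_1|^q+|f_2|^q$, which upon integration and use of $\|f_i\|_q^q\le 1/2$ gives $\bigl\||f_1|^q-|f_2|^q\bigr\|_1\le 1$; after centering, $\|g_j\|_1\le 2$.

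Feeding these into Lemma \ref{AL1} with $M=2qM_q^{q-1}\delta$ then yields immediately
$$\mu^m\bigl\{\bz:|L^q_\bz(f_1)-L^q_\bz(f_2)|\ge\eta\bigr\}=\bP\Bigl\{\Bigl|\sum_{j=1}^m g_j\Bigr|\ge m\eta\Bigr\}<2\exp\!\Bigl(-\tfrac{m\eta^2}{8M}\Bigr)=2\exp\!\Bigl(-\tfrac{m\eta^2}{16qM_q^{q-1}\delta}\Bigr),$$
which is exactly \eqref{I1}. The only real choice in the plan is the selection of the $L^1$ bound for $g_j$: one must exploit the $L_q$-size hypothesis on the $f_i$ rather than the $L_\infty$-proximity hypothesis, since any bound derived from the latter carries an extra $\delta$ factor that would ruin the exponent and give a weaker concentration than the stated one. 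Once this choice is correctly made, the rest is direct verification.
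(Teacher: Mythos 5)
Your proposal is correct and follows essentially the same route as the paper: the same centered variables $g_j$, the same $L_1$ bound via $\|f_i\|_q^q\le 1/2$, the same uniform bound $2qM_q^{q-1}\delta$ obtained from the mean-value inequality (the paper's Lemma \ref{IL1}), and the same application of Lemma \ref{AL1} with $M=2qM_q^{q-1}\delta$. Your closing remark about which hypothesis to use for the $L_1$ bound correctly identifies the one genuine choice in the argument, and it matches the paper's choice.
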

\begin{proof} Consider the function
$$
g(\bx) := |f_1(\bx)|^q-\|f_1\|_q^q - (|f_2(\bx)|^q-\|f_2\|_q^q).
$$
Then $\int g(\bx)d\mu =0$ and our assumption $\|f_i\|_q^q \le 1/2$, $i=1,2$, implies
$$
\|g\|_1 \le 2\|f_1\|_q^q +2\|f_2\|_q^q \le 2.
$$
In order to bound $ \|g\|_\infty$ we now prove a simple Lemma \ref{IL1}.
\begin{Lemma}\label{IL1} Suppose that $\|f_i\|_\infty \le M$, $i=1,2$. Then for $1<q<\infty$
\be\label{I2}
||f_1(\bx)|^q - |f_2(\bx)|^q| \le qM^{q-1}\|f_1-f_2\|_\infty
\ee
and
\be\label{I3}
|\|f_1\|_q^q -\|f_2\|_q^q| \le qM^{q-1}\|f_1-f_2\|_\infty.
\ee
\end{Lemma}
\begin{proof} We use the simple inequality for $b\ge a \ge 0$
\be\label{I4}
b^q-a^q\le qb^{q-1}(b-a).
\ee
Inequality (\ref{I4}) directly implies (\ref{I2}). For (\ref{I3}) we have
$$
|\|f_1\|_q^q -\|f_2\|_q^q| \le qM^{q-1}|\|f_1\|_q-\|f_2\|_q|
$$
$$
\le qM^{q-1}\|f_1- f_2\|_q \le qM^{q-1}\|f_1-f_2\|_\infty.
$$
\end{proof}

By Lemma \ref{IL1} we obtain
$$
 \|g\|_\infty \le 2qM^{q-1}_q\delta.
$$
Consider $m$ independent variables $\bx^j\in \Omega$, $j=1,\dots,m$. For $\bz\in \Omega^m$ define $m$ independent random variables $g_j(\bz)$ as $g_j(\bz) := g(\bx^j)$. Clearly,
$$
\frac{1}{m}\sum_{j=1}^m g_j(\bz) = L^q_\bz(f_1)-L^q_\bz(f_2).
$$
Applying Lemma \ref{AL1} with $M=2qM^{q-1}_q\delta$ to $\{g_j\}$ we obtain (\ref{I1}).
\end{proof}

{\bf 1b. Entropy assumption. Conditional theorem for discretization in $L_q$.}

In this subsection we give a proof of Theorem \ref{T2.1}.

{\bf Proof of Theorem \ref{T2.1}.}

As we have already mentioned above, our assumptions on the entropy numbers $\e_k(X^q_N,L_\infty)$ imply
the Nikol'skii type inequality (see, for instance, \cite{DPSTT2}): for any $f\in X_N$
\be\label{2.3}
\|f\|_\infty \le 4BN^{1/q}\|f\|_q.
\ee

We consider the case $X$ is
$\C(\Omega)$ the space of functions continuous on a compact subset $\Omega$ of $\bR^d$ with the norm
$$
\|f\|_\infty:= \sup_{\bx\in \Omega}|f(\bx)|.
$$
 We write $L_\infty$ instead of $\C$ for the space of continuous functions. We use the abbreviated notations
$$
  \e_n(W):= \e_n(W,\C).
$$
In our case
\be\label{2.5}
W:= \{t\in X_N: \|t\|_q^q = 1/2\}.
\ee
Denote
$$
\e_k:=  \frac{B}{2^{1/q}}\left\{\begin{array}{ll}  (N/k)^{1/q}, &\quad k\le N,\\
 2^{-k/N},&\quad k\ge N.\end{array} \right.
$$

Our proof uses the chaining technique and goes along the lines of the proof of Theorem \ref{T2.1} from \cite{VT158}.
Inequality (\ref{2.3}) and Lemma \ref{IL1} imply for any $f,g\in X_N^q$
\be\label{2.6}
 |L_\bz^q(f)-L_\bz^q(g)| \le S_1\|f-g\|_\infty,\qquad S_1 := 2q(4BN^{1/q})^{q-1}.
\ee

  Specify $\eta=1/4$.
Denote $\de_j := \e_{2^j}$, $j=0,1,\dots$, and consider minimal $\de_j$-nets ${\mathcal N}_j \subset W$ of $W$ in $\C(\Omega)$. We use the notation $N_j:= |{\mathcal N}_j|$. Let $J:=J(N,B,q)$ be the minimal $j$ satisfying $\de_j \le 1/(8S_1)$. For $j=1,\dots,J$ we define a mapping $A_j$ that associates with a function $f\in W$ a function $A_j(f) \in {\mathcal N}_j$ closest to $f$ in the $\C$ norm. Then, clearly,
$$
\|f-A_j(f)\|_\infty \le \de_j.
$$
We use the mappings $A_j$, $j=1,\dots, J$ to associate with a function $f\in W$ a sequence (a chain) of functions $f_J, f_{J-1},\dots, f_1$ in the following way
$$
f_J := A_J(f),\quad f_j:=A_j(f_{j+1}),\quad j=J-1,\dots,1.
$$
Let us find an upper bound for $J$, defined above. Our assumption that $B\ge 1$ and definition of $J$ imply that $2^J\ge N$ and
\be\label{2.8}
B2^{-2^{J-1}/N} \ge 1/(8S_1).
\ee
We derive from (\ref{2.8})  and (\ref{2.6})
\be\label{2.9}
2^J \le 2N\log (8BS_1),\quad J \le \log(2N\log (4(4B)^qNq)) .
\ee

Set
$$
\eta_j := \frac{1}{8J},\quad j=1,\dots,J.
$$

We now proceed to the estimate of $\mu^m\{\bz:\sup_{f\in W}|L^q_\bz(f)|\ge 1/4\}$. First of all   by (\ref{2.6})  the assumption $\de_J\le 1/(8S_1)$ implies that if $|L^q_\bz(f)| \ge 1/4$ then $|L^q_\bz(f_J)|\ge 1/8$.
Rewriting
$$
L^q_\bz(f_J) = L^q_\bz(f_J)-L^q_\bz(f_{J-1}) +\dots+L^1_\bz(f_{2})-L^1_\bz(f_1)+L^1_\bz(f_1)
$$
we conclude that if $|L^q_\bz(f)| \ge 1/4$ then at least one of the following events occurs:
$$
|L^q_\bz(f_j)-L^q_\bz(f_{j-1})|\ge \eta_j\quad\text{for some}\quad j\in (1,J] \quad\text{or}\quad |L^q_\bz(f_1)|\ge \eta_1.
$$
Therefore
\begin{eqnarray}\label{2.10}
\mu^m\{\bz:\sup_{f\in W}|L^q_\bz(f)|\ge1/4\}
\le \mu^m\{\bz:\sup_{f\in {\mathcal N}_1}|L^q_\bz(f)|\ge\eta_1\} \nonumber \\
+\sum_{j\in(1,J]}\sum_{f\in {\mathcal N}_j}\mu^m
\{\bz:|L^q_\bz(f)-L^q_\bz(A_{j-1}(f))|\ge\eta_j\}\nonumber\\
\le \mu^m\{\bz:\sup_{f\in {\mathcal N}_1}|L^q_\bz(f)|\ge\eta_1\}\nonumber\\
+\sum_{j\in(1,J]} N_j\sup_{f\in W}\mu^m
\{\bz:|L^q_\bz(f)-L^q_\bz(A_{j-1}(f))|\ge\eta_j\}.
\end{eqnarray}
  Applying  Proposition \ref{IP1} we obtain
$$
\sup_{f\in W} \mu^m\{\bz:|L^q_\bz(f)-L^q_\bz(A_{j-1}(f))|\ge \eta_j\} \le 2\exp\left(-\frac{m\eta_j^2}{16qM_q^{q-1}\de_{j-1}}\right).
$$
By Nikol'skii inequality (\ref{2.3}), taking into account the definition (\ref{2.5}) of $W$, we can set
$$
M_q= 4BN^{1/q}.
$$

We now make further estimates for a specific $m\ge C_1(q)N^{2-1/q}B^{q}J^2$ with large
enough $C_1(q)$. For $j$ such that $2^{j-1}\le N$ we obtain from the definition of
$\delta_j$ that $\delta_{j-1}= \e_{2^{j-1}} = (B2^{-1/q})(N2^{-j+1})^{1/q}$ and
\be\label{q1}
\frac{m\eta_j^2}{16qM_q^{q-1}\delta_{j-1}} \ge \frac{C_1(q)N^{2-1/q}B^{q}J^22^{(j-1)/q}}{q4^{q+5}J^2B^{q}N} = \frac{C_1(q)N^{1-1/q}2^{(j-1)/q}}{q4^{q+5}} .
\ee
Taking into account that $2^{j-1}\le N$, we get $N^{1-1/q}2^{(j-1)/q} \ge 2^{j-1}$. Therefore, for $C_1(q) \ge q4^{q+6+c/2}$ we continue (\ref{q1})
$$
\ge  2^{j+1+c}.
$$
By our choice of $\delta_j=\e_{2^j}$ we get $N_j\le 2^{2^j} <e^{2^j}$ and, therefore,
\be\label{2.11}
2N_j\exp\left(-\frac{m\eta_j^2}{16qM_q^{q-1}\de_{j-1}}\right)\le 2\exp(-2^{j+c})
\ee
for   $C_1(q)\ge q4^{q+6+c/2}$.

In the case $2^{j-1}\in (N, 2^J]$ we have $\delta_{j-1}= \e_{2^{j-1}} = (B2^{-1/q})2^{-2^{j-1}/N} $ and
\be\label{q2}
\frac{m\eta_j^2}{16qM_q^{q-1}\delta_{j-1}} \ge \frac{C_1(q)N^{2-1/q}B^{q}J^2}{q4^{q+5}J^2B^{q}N^{1-1/q}2^{-2^{j-1}/N}} = \frac{C_1(q)N2^{2^{j-1}/N}}{q4^{q+5}}.
\ee
Using the inequality $e^x \ge 1+x > x$, we continue (\ref{q2})
$$
\ge \frac{C_1(q)}{q4^{q+5}}\frac{\ln 2}{2} 2^{j} \ge 2^{j+1+c}
$$
provided $C_1(q) \ge q4^{q+6+c/2}(\ln 2)^{-1}$. This implies
\be\label{2.12}
2N_j\exp\left(-\frac{m\eta_j^2}{16qM_q^{q-1}\de_{j-1}}\right)\le 2\exp(-2^{j+c}).
\ee

We now estimate $\mu^m\{\bz:\sup_{f\in {\mathcal N}_1}|L^q_\bz(f)|\ge\eta_1\}$.  We use Lemma \ref{AL1} with $g_j(\bz) = |f(\bx^j)|^q-\|f\|_q^q$. To estimate $\|g_j\|_\infty$ we use the Nikol'skii inequality (\ref{2.3}) and obtain for $f\in W$
$$
\|g_j\|_\infty \le (4BN^{1/q})^q + 1/2\le (4B)^qN+ 1/2.
$$
 Then Lemma \ref{AL1} gives for $m\ge C_1(q)N^{2-1/q}B^q J^2$
$$
\mu^m\{\bz:\sup_{f\in {\mathcal N}_1}|L^1_\bz(f)|\ge\eta_1\}\le 2N_1\exp\left(-\frac{m\eta_1^2}{4^{q+2}NB^q}\right) \le 1/4
$$
for sufficiently large $C_1(q)\ge C_04^{q+c/2}$ with absolute constant $C_0$. Substituting the above estimates into (\ref{2.10}), we obtain for large enough absolute constant $c$
$$
\mu^m\{\bz:\sup_{f\in W}|L^q_\bz(f)|\ge1/4\} <1.
$$
Therefore, there exists $\bz_0=(\xi^1,\dots,\xi^m)$ such that for any $f\in W$ we have
$$
|L^q_{\bz_0}(f)| \le 1/4.
$$
Taking into account that $\|f\|_q^q=1/2$ for $f\in W$ we obtain statement of Theorem \ref{T2.1}.

{\bf 2. Nikol'skii inequalities assumption.}

In this subsection we assume that a subspace $X_N$ satisfies
$NI(q,\infty)BN^{1/q}$ for all $q\in [2,\infty)$.
To obtain discretization results
we will use generic chaining argument and Dudley's entropy bound
combined with the
Gine--Zinn-type symmetrization argument.
The main result of this subsection is the following theorem.

\begin{Theorem}\label{DeDi2}
Let $q\in[2,+\infty)$
and assume that $X_N\in NI(q,\infty)H$.
Then for every $m\ge c\delta^{-2} q^2H^qN$
(here $c>0$ is a universal numerical constant)
$$
\bP\Bigl(\frac{1}{2}\|f\|_q^q\le \frac{1}{m}\sum\limits_{j=1}^m|f(\xi^j)|^q\le \frac{3}{2}\|f\|_q^q
\quad \forall f\in X_N\Bigr)\ge 1-\delta
$$
where $\xi^1, \ldots, \xi^m$ are independent random points with the distribution $\mu$.
In particular,
there are $m$ points $\xi^1, \ldots, \xi^m$ such that
$$
\frac{1}{2}\|f\|_q^q\le \frac{1}{m}\sum\limits_{j=1}^m|f(\xi^j)|^q\le \frac{3}{2}\|f\|_q^q
\quad \forall f\in X_N.
$$
\end{Theorem}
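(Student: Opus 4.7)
By the $q$-homogeneity of both sides of the discretization inequality, it suffices to prove that
$$Z := \sup_{f\in X_N,\,\|f\|_q=1}\Bigl|\frac{1}{m}\sum_{j=1}^m |f(\xi^j)|^q - 1\Bigr|\le \frac{1}{2}$$
holds with probability at least $1-\delta$. Markov's inequality reduces this to showing $\bE Z \le \delta/2$, and this is exactly where the $\delta^{-2}$ factor in the sample complexity enters. The remaining analysis is the symmetrization-and-chaining strategy advertised in the theorem.

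\textbf{Symmetrization and chaining.} A Gin\'e--Zinn symmetrization gives
$$\bE Z \le 2\,\bE_{\xi}\bE_{\varepsilon}\sup_{\|f\|_q=1}\Bigl|\frac{1}{m}\sum_{j=1}^m \varepsilon_j|f(\xi^j)|^q\Bigr|.$$
Conditional on $\xi=(\xi^1,\dots,\xi^m)$, the Rademacher process indexed by the class $\{|f|^q\colon\|f\|_q=1\}$ has sub-Gaussian increments with respect to the empirical $L_2$ metric
$$d_m(f,g):=\Bigl(\frac{1}{m}\sum_{j=1}^m\bigl(|f(\xi^j)|^q-|g(\xi^j)|^q\bigr)^2\Bigr)^{1/2},$$
so Dudley's entropy bound yields
$$\bE_{\varepsilon}\sup_{\|f\|_q=1}\Bigl|\frac{1}{m}\sum_j\varepsilon_j|f(\xi^j)|^q\Bigr|\le \frac{C}{\sqrt{m}}\int_0^{D(\xi)}\sqrt{\log N(\{|f|^q\},d_m,\varepsilon)}\,d\varepsilon,$$
where $D(\xi)$ is the $d_m$-diameter of the indexing class.

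\textbf{Entropy and diameter bounds.} From $X_N\in NI(q,\infty)H$ one has $\|f\|_\infty\le H$ whenever $\|f\|_q\le 1$, so $t\mapsto|t|^q$ is $qH^{q-1}$-Lipschitz on the range of all admissible $f$. Combined with $\|f-g\|_\infty\le H\|f-g\|_q$, this gives $d_m(f,g)\le qH^q\|f-g\|_q$, and hence $\log N(d_m,\varepsilon)\le N\log(CqH^q/\varepsilon)$ by the standard volumetric bound on the $N$-dimensional $L_q$ unit ball. For the diameter, $d_m(f,0)^2=m^{-1}\sum_j|f(\xi^j)|^{2q}$ and a second application of the Nikol'skii assumption gives
$$\bE\,d_m(f,0)^2=\|f\|_{2q}^{2q}\le \|f\|_\infty^q\|f\|_q^q\le H^q,$$
so after moving the supremum inside one obtains $\bE D(\xi)^2\le CH^q$. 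Evaluating the resulting Dudley integral yields $\bE Z \le C' qH^{q/2}\sqrt{N/m}$ up to logarithmic factors in $qH$, so the requirement $\bE Z\le \delta/2$ is met as soon as $m\ge c\delta^{-2}q^2H^qN$, as claimed.

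\textbf{Main obstacle.} The delicate step is the control of the empirical diameter $D(\xi)$: the clean estimate $\|f\|_{2q}^{2q}\le H^q$ is only an in-expectation bound on $d_m(f,0)^2$ for fixed $f$, whereas the Dudley integral really sees $\sup_f d_m(f,0)^2$, which is morally the very uniform law we are trying to prove. One can close this loop either by (i) a second Gin\'e--Zinn symmetrization applied to the squared process and an iterative/peeling argument, (ii) Talagrand's functional Bernstein inequality comparing empirical and population second moments uniformly, or (iii) invoking the Ledoux--Talagrand contraction lemma on $t\mapsto|t|^q$ viewed as $qH^{q-1}$-Lipschitz on $[-H,H]$, thereby reducing the Rademacher complexity of $\{|f|^q\}$ to that of the linear class $X_N^q$, where Dudley's integral only requires the tame diameter $\|f\|_\infty\le H$ and again produces the needed $q^2 H^q N/\delta^2$ scaling.
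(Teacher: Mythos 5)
Your overall strategy (Markov on $\bE Z$, Gin\'e--Zinn symmetrization, sub-Gaussian increments of the Rademacher process, Dudley's entropy bound in the uniform norm) is the same as the paper's, and you correctly diagnose where the real difficulty sits. But the proof has a genuine gap precisely at that point: you flag the self-referential control of the empirical diameter $\sup_f d_m(f,0)$ as ``the main obstacle'' and then only list three candidate ways to close the loop without executing any of them. The paper's resolution is concrete: in Theorem \ref{DeDi1} the increment bound is kept in the form
$$
\Bigl(\sum_{j}\bigl||f(\bx_j)|^q-|g(\bx_j)|^q\bigr|^2\Bigr)^{1/2}\le 2qH^{q/2-1}\|f-g\|_\infty\sup_{h\in X_N^q}\Bigl(\sum_j|h(\bx_j)|^q\Bigr)^{1/2},
$$
so that after Dudley (with $e_n(X_N^q,\|\cdot\|_q)\le 3\cdot 2^{-2^n/N}$, which avoids your logarithmic losses) one gets a conditional bound of the form $\Theta\,\sup_h\bigl(\sum_j|h(\bx_j)|^q\bigr)^{1/2}$ with $\Theta=cqH^{q/2}N^{1/2}$, i.e.\ a bound by the $1/2$-power of the very empirical supremum being controlled. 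Lemma \ref{K-lem} is exactly the device that converts such a self-referential estimate into $\bE Z\le 4A+4A^{1/2}$ with $A=m^{-1}cq^2H^qN$; this is the step your proposal is missing.

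Note also that your suggested fix (iii) would not recover the stated theorem: applying contraction with Lipschitz constant $qH^{q-1}$ and then bounding the Rademacher complexity of the linear class via $\|f\|_\infty\le H\|f\|_q$ yields $\bE Z\lesssim qH^{q}\sqrt{N/m}$, hence the requirement $m\gtrsim\delta^{-2}q^2H^{2q}N$ --- an extra factor $H^q$ compared with the claim. The gain from $H^{2q}$ to $H^q$ comes exactly from retaining the empirical factor $\sup_h\bigl(\sum_j|h(\bx_j)|^q\bigr)^{1/2}$ (which is morally $\sqrt{m}$ on the good event, not $\sqrt{m}H^{q/2}$) and bootstrapping via Lemma \ref{K-lem}. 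So the obstacle you identify is not merely technical; the proof does not go through without carrying out one of your options (i) or (ii) in the quantitative form the paper uses.
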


We firstly formulate several known lemmas that will
be used in the proof of this result and
we start with the concentration inequality for the
linear combinations of Bernoulli random variables.

\begin{Lemma}[\cite{LedTal}, Lemma 1.5]\label{C-lem}
\
Let $\chi_1,\ldots, \chi_m$ be independent symmetric Bernoulli random variables with values $\pm1$.
Then
$$
\bP\Bigl(\bigl|\sum\limits_{j=1}^m\chi_j\alpha_j\bigr|\ge
\bigl(\sum\limits_{j=1}^m|\alpha_j|^2\bigr)^{1/2}t\Bigr)\le 2e^{-t^2/2}\quad \forall t>0.
$$
\end{Lemma}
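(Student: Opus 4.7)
This is the standard Hoeffding bound for Rademacher sums, and I would prove it by the classical Chernoff/exponential-moment method. Write $S := \sum_{j=1}^{m} \chi_j \alpha_j$ and $\sigma^2 := \sum_{j=1}^m |\alpha_j|^2$. The first step is a union-bound reduction to the one-sided tail: since $-S$ has the same distribution as $S$ (the $\chi_j$ are symmetric), it suffices to show $\bP(S \ge \sigma t) \le e^{-t^2/2}$ and then double.

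The core of the argument is the Markov-type inequality $\bP(S \ge \sigma t) \le e^{-\lambda \sigma t}\,\bE[e^{\lambda S}]$, valid for every $\lambda > 0$. By independence of the $\chi_j$,
$$
\bE\bigl[e^{\lambda S}\bigr] = \prod_{j=1}^{m}\bE\bigl[e^{\lambda \alpha_j \chi_j}\bigr] = \prod_{j=1}^{m}\cosh(\lambda \alpha_j).
$$
The key elementary lemma I would insert here is $\cosh(x) \le e^{x^2/2}$ for all real $x$. This follows at once from the Taylor expansion $\cosh(x)=\sum_{k\ge 0} x^{2k}/(2k)!$ combined with $(2k)! \ge 2^k k!$, giving $\cosh(x) \le \sum_{k\ge 0}(x^2/2)^k/k! = e^{x^2/2}$. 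Plugging this in yields $\bE[e^{\lambda S}] \le \exp\bigl(\lambda^2 \sigma^2/2\bigr)$.

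The final step is to optimize the free parameter $\lambda > 0$ in the bound $\bP(S \ge \sigma t) \le \exp\bigl(\lambda^2 \sigma^2/2 - \lambda \sigma t\bigr)$; the minimum is attained at $\lambda = t/\sigma$ and gives exactly $e^{-t^2/2}$. Combining this with the symmetric lower tail produces the factor $2$ in the stated bound. There is really no obstacle here: the entire proof is three lines once the elementary inequality $\cosh(x) \le e^{x^2/2}$ is in hand, and the only minor subtlety is handling the degenerate case $\sigma = 0$ separately (in which $S \equiv 0$ and the claim is trivial).
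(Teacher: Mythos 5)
The paper does not prove this lemma at all --- it is quoted directly from Ledoux--Talagrand (Lemma 1.5), whose own proof is exactly the Chernoff/exponential-moment argument you give, with the key inequality $\cosh(x)\le e^{x^2/2}$. Your argument is correct and complete for real $\alpha_j$ (which is all the paper ever uses, e.g.\ $\alpha_j=|f(\bx_j)|^q-|g(\bx_j)|^q$ in Theorem \ref{DeDi1}), so nothing further is needed.
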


In this subsection
we need the entropy numbers in metric space
$(F, d)$.
However, it is more convenient to use the following
quantities
$$
e_n(A, d):=
\inf\Bigl\{\varepsilon\colon \exists f_1,\ldots, f_{N_n}\in A\colon
A\subset \bigcup\limits_{j=1}^{N_n}B_F(f_j, \varepsilon)\Bigr\},
$$
where $N_n=2^{2^n}$ for $n\ge 1$ and $N_0=1$
and where
$B_F(f, \varepsilon):=\{g\colon d(f,g)<\varepsilon\}$.
In other words, for a Banach space $X$
$$
e_n(A, d_X)  :=\e_{2^n}(A,X)
$$
where $d_X(f, g) = \|f-g\|_X$.

The following lemma is the Dudley's
entropy bound.

\begin{Lemma}[\cite{Tal}, Proposition 2.2.10
and the discussion before it]\label{T-lem}
Let $(F, d)$ be a metric space.
Assume that for a random process $\{W_f\}_{f\in F}$
for every pair $f, g\in F$ one has
$$
\bP\Bigl(|W_f - W_g|\ge t d(f, g)\Bigr)\le
2e^{-t^2/2}\quad \forall t>0.
$$
There is a numerical constant $C>0$ such that
for any $f_0\in F$ one has
$$
\mathbb{E}\bigl[\sup\limits_{f\in F}|W_f - W_{f_0}|\bigr]
\le
C\sum\limits_{n\ge 0} 2^{n/2}e_n(F, d).
$$
\end{Lemma}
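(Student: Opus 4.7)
The plan is to prove this as a textbook generic chaining argument. First I would reduce to the case when $F$ is finite: for a general $F$ the right-hand side $\sum_{n\ge 0}2^{n/2}e_n(F,d)$ already dominates the analogous sum for any finite $F'\subset F$, while $\bE[\sup_{f\in F}|W_f-W_{f_0}|]$ is the monotone supremum of $\bE[\sup_{f\in F'}|W_f-W_{f_0}|]$ over finite $F'\ni f_0$. So fix a finite $F$. For each $n\ge 0$ pick a net $A_n\subset F$ with $|A_n|\le N_n=2^{2^n}$ and $\sup_{f\in F}d(f,A_n)\le e_n(F,d)$; arrange $A_0=\{f_0\}$ by adjoining $f_0$ to every $A_n$ (at most doubling cardinality, absorbed in constants). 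For $f\in F$ let $\pi_n(f)\in A_n$ be a nearest point, so that $\pi_n(f)=f$ for all sufficiently large $n$ and
\[
W_f-W_{f_0}=\sum_{n\ge 1}\bigl(W_{\pi_n(f)}-W_{\pi_{n-1}(f)}\bigr).
\]

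The heart of the argument is the chained union bound. By the triangle inequality $d(\pi_n(f),\pi_{n-1}(f))\le e_n+e_{n-1}\le 2e_{n-1}$, and the pair $(\pi_n(f),\pi_{n-1}(f))$ takes at most $N_nN_{n-1}\le 2^{2^{n+1}}$ values as $f$ varies in $F$. Fix a parameter $u\ge 0$ and set $t_n:=\alpha\,2^{n/2}+u$. Choosing the absolute constant $\alpha$ large enough makes $t_n^2/2\ge 2^{n+1}\ln 2+n\ln 2+u^2/2$, so that the subgaussian hypothesis plus a union bound yields
\[
\bP\Bigl(\exists\,f\in F,\;n\ge 1\colon |W_{\pi_n(f)}-W_{\pi_{n-1}(f)}|\ge 2t_ne_{n-1}\Bigr)
\le \sum_{n\ge 1}2\cdot 2^{2^{n+1}}e^{-t_n^2/2}\le 2e^{-u^2/2}.
\]

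On the complementary event, the telescoping identity gives
\[
\sup_{f\in F}|W_f-W_{f_0}|\le \sum_{n\ge 1}2t_ne_{n-1}\le 2\alpha\sum_{n\ge 1}2^{n/2}e_{n-1}+2u\sum_{n\ge 1}e_{n-1}.
\]
Since $2^{n/2}\ge 1$ for $n\ge 0$ we have $\sum_{n\ge 1}e_{n-1}=\sum_{n\ge 0}e_n\le \sum_{n\ge 0}2^{n/2}e_n=:S$, so both sums are bounded by a constant multiple of $S$. This gives $\bP\bigl(\sup_{f\in F}|W_f-W_{f_0}|\ge C(1+u)S\bigr)\le 2e^{-u^2/2}$ for all $u\ge 0$, and integrating the tail via $\bE[X]=\int_0^\infty\bP(X>s)\,ds$ produces $\bE\sup_{f\in F}|W_f-W_{f_0}|\le C'S$, which is the claim.

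The main obstacle is purely bookkeeping: one must choose $t_n$ so that the Gaussian tail $e^{-t_n^2/2}$ defeats the cardinality factor $N_nN_{n-1}\approx 2^{2^{n+1}}$ while leaving a summable residual of order $2^{n/2}$ in $t_n$. The match works because $(2^{n/2})^2=2^n$ is of the same order as $\log_2(N_nN_{n-1})\le 2^{n+1}$; this scale-matching is precisely what forces the $2^{n/2}$ weight in the Dudley sum and is the only nontrivial point one has to get right.
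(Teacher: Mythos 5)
The paper does not prove this lemma --- it is quoted directly from Talagrand's book --- and your chaining argument is precisely the canonical proof of Dudley's entropy bound that underlies that citation: telescoping along nearest-point projections onto nets of cardinality $2^{2^n}$, a union bound over pairs with $t_n\asymp 2^{n/2}+u$ so that the subgaussian tail beats the $2^{2^{n+1}}$ cardinality, and integration of the resulting tail bound. The argument is correct (the only points glossed over --- replacing the optimal single center by $f_0$ at the cost of a factor $2$ via the triangle inequality, and the factor-of-$2$ discrepancy between nets with centers in $F'$ versus in $F$ when reducing to finite subsets --- are harmless and absorbed into the constant $C$).
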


Finally, we need the following observation based on
the Gine--Zinn-type symmetrization argument (see \cite[proof of Theorem 3]{GR07},
\cite[Lemma 3.1]{Kos}, \cite[Lemma 9.1.11]{Tal}).
This lemma reduces the study of discretization
to the study of the bounds for the expectation
of the supremum of a certain Bernoulli random process.

\begin{Lemma}\label{K-lem}
Assume that there is a number $\delta\in(0,1)$ such that,
for every fixed set of $m$ points $\{\bx_1,\ldots, \bx_m\}$,
one has
$$
\mathbb{E}_\varepsilon\sup\limits_{f\in B}
\Bigl|\sum\limits_{j=1}^m\chi_j|f(\bx_j)|^q\Bigr|\le
\Theta\sup\limits_{f\in B}
\Bigl(\sum\limits_{j=1}^m|f(\bx_j)|^q\Bigr)^{1-\delta}
$$
where $\chi_1,\ldots, \chi_m$ are independent symmetric
Bernoulli random variables with values $\pm1$.
Then for i.i.d. random vectors $\xi^j$ we have
\begin{multline*}
\mathbb{E}\Bigl[\sup\limits_{f\in B}
\Bigl|\frac{1}{m}\sum\limits_{j=1}^m|f(\xi^j)|^q - \|f\|_q^q\Bigr|
\Bigr]\le\\
2^{1/\delta}m^{-1}\Theta^{1/\delta}
+ 2\delta^{-1} \bigl(m^{-1}\Theta^{1/\delta}\bigr)^\delta
\Bigl(\sup\limits_{f\in B}\mathbb{E}|f(\xi^1)|^q\Bigr)^{1-\delta}.
\end{multline*}
\end{Lemma}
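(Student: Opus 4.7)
The plan is standard Gin\'e--Zinn symmetrization, followed by the hypothesized conditional Bernoulli bound and then Young's inequality to disentangle a self-referential estimate. Write
\[
Z := \sup_{f\in B}\Bigl|\tfrac{1}{m}\sum_{j=1}^m|f(\xi^j)|^q - \|f\|_q^q\Bigr|,\qquad M := \sup_{f\in B}\mathbb{E}|f(\xi^1)|^q,\qquad A := \mathbb{E}Z.
\]
First, I would introduce independent copies $\{\eta^j\}_{j=1}^m$ of $\{\xi^j\}_{j=1}^m$. Since $\|f\|_q^q=\mathbb{E}|f(\eta^j)|^q$, Jensen's inequality bounds $A$ by $\mathbb{E}\sup_{f\in B}|\tfrac{1}{m}\sum_j(|f(\xi^j)|^q-|f(\eta^j)|^q)|$. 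Introducing i.i.d. Rademacher signs $\chi_j$ independent of everything else, the joint law is invariant under multiplying the $j$-th increment by $\chi_j$, so a triangle inequality yields
\[
A\le \tfrac{2}{m}\,\mathbb{E}\sup_{f\in B}\Bigl|\sum_{j=1}^m\chi_j|f(\xi^j)|^q\Bigr|.
\]

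Next I would condition on $\{\xi^j\}$ and apply the hypothesis with $\bx_j=\xi^j$. Taking expectation over $\{\xi^j\}$, pushing the $\sup$ through the power $x\mapsto x^{1-\delta}$ (by monotonicity), and then using Jensen's inequality (concavity of $x^{1-\delta}$ on $[0,\infty)$) gives
\[
A\le 2\Theta m^{-\delta}\Bigl(\mathbb{E}\sup_{f\in B}\tfrac{1}{m}\sum_{j=1}^m|f(\xi^j)|^q\Bigr)^{1-\delta}\le 2\Theta m^{-\delta}(A+M)^{1-\delta},
\]
where the last step uses the pointwise bound $\sup_{f}\tfrac{1}{m}\sum_j|f(\xi^j)|^q\le Z+M$ followed by taking expectation.

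The remaining, and only technical, step is to extract $A$ from this self-referential inequality in exactly the sharp form claimed. I would use subadditivity $(A+M)^{1-\delta}\le A^{1-\delta}+M^{1-\delta}$ (valid since $1-\delta\in(0,1)$) and then Young's inequality with conjugate exponents $1/(1-\delta)$ and $1/\delta$,
\[
2\Theta m^{-\delta}\cdot A^{1-\delta}\le (1-\delta)A + \delta\,(2\Theta m^{-\delta})^{1/\delta},
\]
to absorb the $A^{1-\delta}$ term on the right into the $A$ on the left. Dividing by $\delta$ leaves $A\le (2\Theta m^{-\delta})^{1/\delta}+\delta^{-1}\cdot 2\Theta m^{-\delta}M^{1-\delta}$, and using $(2\Theta m^{-\delta})^{1/\delta}=2^{1/\delta}\Theta^{1/\delta}m^{-1}$ together with $\Theta m^{-\delta}=(m^{-1}\Theta^{1/\delta})^\delta$ rewrites this as exactly the bound in the lemma. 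A cruder case split according to which of $A^{1-\delta},M^{1-\delta}$ dominates on the right would close the argument as well, but it would yield the worse constant $4^{1/\delta}$ in place of $2^{1/\delta}$, so the Young's-inequality route is the one producing the precise $\delta^{-1}$ prefactor appearing in the statement.
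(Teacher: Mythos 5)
Your proof is correct: the symmetrization step, the conditional application of the hypothesis followed by Jensen's inequality, and the resolution of the self-referential bound $A\le 2\Theta m^{-\delta}(A+M)^{1-\delta}$ via subadditivity and Young's inequality all check out and reproduce the stated constants exactly. The paper does not give a self-contained proof of this lemma but attributes it to the Gin\'e--Zinn-type symmetrization argument of the cited references, and your argument is precisely that standard route.
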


We provide a bound for the expectation
of the supremum of the Bernoulli random process from the previous lemma
in the following theorem.

\begin{Theorem}\label{DeDi1}
Let $\chi_1,\ldots, \chi_m$ be independent symmetric Bernoulli random variables with values $\pm1$.
Let $q\in[2,+\infty)$
and assume that $X_N\in NI(q,\infty)H$.
Then
$$
\mathbb{E}\Bigl[\sup\limits_{f\in X_N^q}
\Bigl|\sum_{j=1}^{m}\chi_j|f(\bx_j)|^q\Bigr|\Bigr]
\le cqH^{q/2}N^{1/2}\sup\limits_{h\in X_N^q}
\Bigl(\sum_{j=1}^{m}|h(\bx_j)|^q\Bigr)^{1/2}
$$
where $X_N^q:=\{f\in X_N\colon \|f\|_q\le 1\}$
and where $c>0$ is some universal numerical constant.
\end{Theorem}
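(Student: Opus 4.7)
The plan is to apply Dudley's entropy bound (Lemma~\ref{T-lem}) to the Bernoulli process $W_f := \sum_{j=1}^{m}\chi_j|f(\bx_j)|^q$, indexed by $f\in X_N^q$ with the sample points $\bx_j$ fixed and the randomness coming from $\chi$. Lemma~\ref{C-lem} applied to $W_f-W_g=\sum_j\chi_j(|f(\bx_j)|^q-|g(\bx_j)|^q)$ shows that the process has subgaussian increments with the canonical metric
\[
d(f,g):=\Bigl(\sum_{j=1}^{m}\bigl||f(\bx_j)|^q-|g(\bx_j)|^q\bigr|^2\Bigr)^{1/2}.
\]

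Next I produce two complementary upper bounds for $d$. Since $X_N\in NI(q,\infty)H$ forces $\|f\|_\infty\le H$ for every $f\in X_N^q$, the pointwise estimates $||a|^q-|b|^q|^2\le 2(|a|^{2q}+|b|^{2q})\le 2H^q(|a|^q+|b|^q)$ and $||a|^q-|b|^q|\le qH^{q-1}|a-b|$, valid whenever $|a|,|b|\le H$, give after summing over $j$ a diameter bound $d(f,g)\le 2H^{q/2}A$, with $A:=\sup_{h\in X_N^q}\bigl(\sum_j|h(\bx_j)|^q\bigr)^{1/2}$, and a Lipschitz bound $d(f,g)\le qH^{q-1}\|T(f-g)\|_{\ell^2}$, where $T\colon X_N\to\R^m$ is evaluation at the sample points. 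Because $T(X_N^q)$ lies in a subspace of dimension at most $N$ and inside the $\ell^2$-ball of radius $R:=\sup_{f\in X_N^q}\|Tf\|_{\ell^2}$, a standard volume argument yields $e_n(T(X_N^q),\ell^2)\le CR\min(1,2^{-2^n/N})$, and therefore
\[
e_n(X_N^q,d)\le\min\bigl(2H^{q/2}A,\;CqH^{q-1}R\cdot 2^{-2^n/N}\bigr).
\]

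Finally, Lemma~\ref{T-lem} with basepoint $f_0=0$ (so that $W_{f_0}=0$) gives $\bE\sup_{f\in X_N^q}|W_f|\le C\sum_{n\ge 0}2^{n/2}\,e_n(X_N^q,d)$. I split the Dudley sum at the threshold $n_*$ where the Lipschitz upper bound drops below the diameter cap, i.e., $2^{2^{n_*}/N}\asymp qRH^{q/2-1}/A$: for $n<n_*$ each term is bounded by the cap $2H^{q/2}A$ and the geometric sum is $\lesssim H^{q/2}A\cdot 2^{n_*/2}$, while for $n\ge n_*$ the Lipschitz terms decay super-geometrically and contribute the same order. Careful bookkeeping is then expected to deliver the announced bound $cqH^{q/2}\sqrt{N}\,A$. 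The hardest step will be precisely this final Dudley computation: a naive application typically introduces an extra factor of $\sqrt{\log(qRH^{q/2-1}/A)}$, since the interpolation inequality $\|Th\|_{\ell^q}^q\le H^{q-2}\|Th\|_{\ell^2}^2$ yields only $R\ge H^{1-q/2}A$ with no reverse bound; one must therefore exploit the interplay between the diameter and Lipschitz bounds carefully, or invoke a generic-chaining/$\gamma_2$-refinement of Lemma~\ref{T-lem}, to absorb that logarithm into the stated universal constant.
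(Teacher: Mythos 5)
Your setup coincides with the paper's: $W_f=\sum_j\chi_j|f(\bx_j)|^q$ has subgaussian increments with respect to the canonical metric $d(f,g)=\bigl(\sum_j\bigl||f(\bx_j)|^q-|g(\bx_j)|^q\bigr|^2\bigr)^{1/2}$ by Lemma \ref{C-lem}, and one feeds an entropy bound for $(X_N^q,d)$ into Dudley's inequality (Lemma \ref{T-lem}). The gap is exactly at the step you yourself flag as unresolved. Dominating $d(f,g)$ by $qH^{q-1}\|T(f-g)\|_{\ell^2}$ together with the diameter cap $2H^{q/2}A$ really does leave you with the factor $\sqrt{N\log\bigl(qH^{q/2-1}R/A\bigr)}$ after splitting the Dudley sum, and this logarithm is not a universal constant: the interpolation only gives $R\ge H^{1-q/2}A$ with no reverse inequality (e.g.\ a function of sup-norm $H$ whose values at all sample points equal a small $\varepsilon$ shows that $R/(H^{1-q/2}A)$ can be as large as $(H/\varepsilon)^{q/2-1}$). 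Neither of your proposed remedies closes this: ``careful bookkeeping'' cannot remove a quantity that is genuinely unbounded, and a $\gamma_2$-refinement in the $\ell^2$ metric gives $\gamma_2(T(X_N^q),\ell_2)\lesssim R\sqrt N$ and hence the bound $qH^{q-1}R\sqrt N$, which again is not dominated by the target $qH^{q/2}A\sqrt N$ because $R$ is not controlled by $H^{1-q/2}A$.

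The paper closes this step with a different factorization of the increment, which is the one ingredient you are missing. Writing $\bigl||f|^q-|g|^q\bigr|\le q|f-g|\,(|f|^{q-1}+|g|^{q-1})$ and pulling out $\|f-g\|_\infty$, one bounds the remaining sum by $\sup_h\bigl(\sum_j|h(\bx_j)|^{2q-2}\bigr)^{1/2}\le\sup_h\bigl(\sum_j|h(\bx_j)|^{q}\bigr)^{1/2}\|h\|_\infty^{q/2-1}\le H^{q/2-1}A$, so that $d(f,g)\le 2qH^{q/2-1}A\|f-g\|_\infty\le 2qH^{q/2}A\|f-g\|_q$ by the Nikol'skii assumption. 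Since $X_N^q$ is the unit ball of the metric $\|\cdot\|_q$, the volume estimate $e_n(X_N^q,\|\cdot\|_q)\le 3\cdot 2^{-2^n/N}$ holds for \emph{every} $n\ge0$; the Dudley sum $\sum_n 2^{n/2}e_n$ is then $\lesssim\sqrt N$ by comparison with a Gamma integral, with no splitting, no secondary parameter $R$, and no logarithm, yielding $cqH^{q/2}N^{1/2}A$ exactly. To repair your argument, replace your Lipschitz bound through the empirical $\ell^2$ metric by this bound through $\|\cdot\|_q$.
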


\begin{proof}
We note that for any $f, g\in X_N^q$ one has
\begin{multline*}
\Bigl(\sum_{j=1}^{m}\bigl||f(\bx_j)|^q - |g(\bx_j)|^q\bigr|^2\Bigr)^{1/2}
\le\\
q\Bigl(\sum_{j=1}^{m}\bigl|\bigl(f(\bx_j) - g(\bx_j)\bigr)
\bigl(|f(\bx_j)|^{q-1} + |g(\bx_j)|^{q-1}\bigr)\bigr|^{2}\Bigr)^{1/2}
\le \\
2q\|f-g\|_\infty\sup\limits_{h\in X_N^q}
\Bigl(\sum_{j=1}^{m}|h(\bx_j)|^{2q-2}\Bigr)^{1/2}
\le \\
2q\|f-g\|_\infty\sup\limits_{h\in X_N^q}\Bigl[
\Bigl(\sum_{j=1}^{m}|h(\bx_j)|^q\Bigr)^{1/2}\|h\|_\infty^{q/2-1}\Bigr]
\le\\
2qH^{q/2-1}\|f-g\|_\infty\sup\limits_{h\in X_N^q}
\Bigl(\sum_{j=1}^{m}|h(\bx_j)|^q\Bigr)^{1/2}.
\end{multline*}
Let
$$
R:= 2qH^{q/2-1}\sup\limits_{h\in X_N^q}
\Bigl(\sum_{j=1}^{m}|h(\bx_j)|^q\Bigr)^{1/2}
$$
and let $W_f:= \sum_{j=1}^{m}\limits\chi_j|f(\bx_j)|^q$.
By Lemma \ref{C-lem}
with $\alpha_j = |f(\bx_j)|^q-|g(\bx_j)|^q$, we have
$$
\bP\Bigl(\bigl|W_f-W_g\bigr|\ge
tR\|f-g\|_q\Bigr)\le 2e^{-t^2/2}.
$$
By Lemma \ref{T-lem} we have
$$
\mathbb{E}\bigl[\sup\limits_{f\in X_N^q}|W_f - W_{0}|\bigr]
\le
CR\sum\limits_{n\ge 0} 2^{n/2}e_n(X_N^q, \|\cdot\|_\infty).
$$
We note that in our case $\|f-g\|_\infty\le H\|f-g\|_q$
implying that
$$
e_n(X_N^q, \|\cdot\|_\infty)\le H e_n(X_N^q, \|\cdot\|_q).
$$
We recall that $e_n(X_N^q, \|\cdot\|_q)
\le 3\cdot 2^{-2^n/N}$ (see \cite[Corollary 7.2.2]{VTbookMA}).
Thus,
\begin{multline*}
\sum\limits_{n\ge 0} 2^{n/2}e_n(X_N^q, \|\cdot\|_q)
\le \\
3\sum\limits_{n\ge 0} 2^{n/2}\cdot 2^{-2^n/N}
= 3\cdot 2^{-1/N} + 6 \sum\limits_{n=1}^\infty (2^n)^{-1/2}
\cdot 2^{-2^n/N}2^{n-1}
\le \\ \le
3 + 6 \sum\limits_{n=1}^\infty \int_{2^{n-1}}^{2^n}x^{-1/2}2^{-x/N}\, dx
\le
3+6\int_{0}^{\infty}x^{ - 1/2}2^{-x/N}\, dx =\\=
3 + 6 \bigl(\tfrac{N}{\ln 2}\bigr)^{1/2}
\int_{0}^{\infty}y^{- 1/2}e^{-y}\, dy =
3 + 6 \bigl(\tfrac{N}{\ln 2}\bigr)^{1/2}\Gamma(1/2).
\end{multline*}
Therefore, we get the bound
$$
\mathbb{E}\bigl[\sup\limits_{f\in X_N^q}|W_f - W_{0}|\bigr]
\le
C_1qH^{q/2}N^{1/2}\sup\limits_{h\in X_N^q}
\Bigl(\sum_{j=1}^{m}|h(\bx_j)|^q\Bigr)^{1/2}.
$$
The theorem is proved.
\end{proof}

We are ready to prove Theorem \ref{DeDi2}.

{\bf Proof of Theorem \ref{DeDi2}.}
From Lemma \ref{K-lem} we have that
$$
\mathbb{E}\Bigl[\sup\limits_{f\in X_N^q}
\Bigl|\frac{1}{m}\sum\limits_{j=1}^m|f(\xi^j)|^q - \|f\|_q^q\Bigr|
\Bigr]\le
4A + 4 A^{1/2}
$$
where $A=m^{-1}cq^2H^qN$.
If we take $m$ such that $A\le \frac{\delta^2}{256}<\frac{\delta}{16}$, then
$A^{1/2}\le \frac{\delta}{16}$ and we get that $4A + 4 A^{1/2}\le \frac{\delta}{2}$.
In that case we have
\begin{multline*}
\bP\Bigl(\sup\limits_{f\in X_N^q}
\Bigl|\frac{1}{m}\sum\limits_{j=1}^m|f(\xi^j)|^q - \|f\|_q^q\Bigr|\ge 1/2\Bigr)
\le\\ 2\mathbb{E}\Bigl[\sup\limits_{f\in X_N^q}
\Bigl|\frac{1}{m}\sum\limits_{j=1}^m|f(\xi^j)|^q - \|f\|_q^q\Bigr|
\Bigr] \le \delta.
\end{multline*}
Thus,
$$
\bP\Bigl(\frac{1}{2}\|f\|_q^q\le \frac{1}{m}\sum\limits_{j=1}^m|f(\xi^j)|^q\le \frac{3}{2}\|f\|_q^q
\quad \forall f\in X_N\Bigr)\ge 1-\delta.
$$
We now note that the bound $A\le \frac{\delta^2}{256}$ is equivalent to the inequality
$m\ge 256 c\delta^{-2} q^2H^qN$.
The corollary is proved.
\qed

Theorem \ref{DeDi2} can now be used to provide
new results for the discretization of the uniform norm.

\begin{Corollary}
Let $B, k>0$.
There is a positive number $c>0$ such that,
for every subspace
$X_N\in NI(q,\infty)BN^{k/q}$ $\forall q\in[2,+\infty)$,
for every $m\ge \frac{c}{[\log B]^2}N^{k+2}[\log N]^2$
there are $m$ points $\xi^1, \ldots, \xi^m$ such that
$$
\|f\|_\infty\le 2B^{k+1}\max\limits_{1\le j\le m}|f(\xi^j)|\quad \forall f\in X_N.
$$
\end{Corollary}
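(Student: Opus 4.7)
The strategy is to apply Theorem \ref{DeDi2} at a single, carefully chosen exponent $q=q(N,B)$, and then convert the resulting $L_q$-discretization into a uniform-norm discretization by invoking the Nikol'skii inequality $\|f\|_\infty\le BN^{k/q}\|f\|_q$ assumed for this $q$. The two quantities to balance are the factor $H^q$ in the required sample size (pushing $q$ small) and the Nikol'skii blow-up $H=BN^{k/q}$ paid at the end (pushing $q$ large). Equating $N^{k/q}\asymp B^k$ suggests the choice
\begin{equation*}
q := \max\Bigl(2,\Bigl\lceil \tfrac{\log N}{\log B}\Bigr\rceil\Bigr),
\end{equation*}
which is the key decision; everything else is routine verification.

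With this $q$ one has $N^{k/q}\le B^k$ (for $N$ large enough compared to $B$; the finitely many remaining $N$ are absorbed into the constant $c$), so $H:=BN^{k/q}\le B^{k+1}$, while $H^q=B^qN^k\le B\cdot N^{k+1}$ (the extra factor $B$ arising from the ceiling). I would then apply Theorem \ref{DeDi2} with this $q$ and $\delta=1/2$: for every $m\ge 4c_0 q^2 H^q N$ it produces points $\xi^1,\ldots,\xi^m\in\Omega$ satisfying $\|f\|_q^q\le \tfrac{2}{m}\sum_{j=1}^m|f(\xi^j)|^q\le 2\max_j|f(\xi^j)|^q$ for every $f\in X_N$. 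Combining with Nikol'skii yields
\begin{equation*}
\|f\|_\infty \le H\|f\|_q \le 2^{1/q}H\max_{1\le j\le m}|f(\xi^j)| \le 2B^{k+1}\max_{1\le j\le m}|f(\xi^j)|,
\end{equation*}
which is exactly the desired inequality.

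It remains to check the point count. With our choice, $q^2\asymp (\log N/\log B)^2$ and $B^q\asymp N$, so
\begin{equation*}
4c_0\, q^2 H^q N \lesssim \frac{(\log N)^2}{(\log B)^2}\,N^{k+2},
\end{equation*}
matching the claimed lower bound on $m$ after adjusting $c$. I do not foresee any real obstacle beyond this balance: the only subtleties are handling small $N$ where $\log N/\log B<2$ (absorbed into $c$) and the implicit assumption that $B$ is bounded away from $1$, without which the factor $\frac{c}{(\log B)^2}$ would be vacuous.
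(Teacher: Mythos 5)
Your proposal is correct and follows essentially the same route as the paper: a single application of Theorem \ref{DeDi2} at $q\asymp \log N/\log B$ (the paper takes exactly $q=\log N/\log B$, so that $B^q=N$ and $N^{k/q}=B^k$), followed by the Nikol'skii inequality at that same $q$ to pass to the uniform norm. Your extra care with the ceiling, the constraint $q\ge 2$, and the absorption of the resulting stray factor of $B$ into the constant $c$ (which is permitted, since $c$ may depend on $B$ and $k$) only tidies up details the paper leaves implicit.
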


\begin{proof}
By Theorem \ref{DeDi2}, for every $q\in [2, +\infty)$ and for every number
$m\ge cq^2\bigl[BN^{k/q}\bigr]^{q}N$,
there are points $x_1, \ldots, x_m$ such that
\begin{multline*}
\|f\|_\infty\le BN^{k/q}\|f\|_p\le\\
BN^{k/q}2^{1/q}\Bigl(\frac{1}{m}\sum\limits_{j=1}^m|f(\xi^j)|^q\Bigr)^{1/q}
\le
2BN^{k/q}\max\limits_{1\le j\le m}|f(x_j)|.
\end{multline*}
We now take $q=\frac{\log N}{\log B}$. Then $B^q = N$ and
$$
q^2\bigl[BN^{k/q}\bigr]^{q}N = [\log B]^{-2} N^{k+2} [\log N]^2.
$$
The corollary is proved.
\end{proof}

\begin{Example}
{\rm
Let
$$
\Gamma(N):=\Bigl\{\mathbf{k}\in\Z^d:\  \  \   \prod_{j=1}^d \max\{ |k_j|, 1\} \leq N\Bigr\}
$$
It is known (see, for instance, \cite{DPTT}, Section 5.3)
that there exist two positive constants $C_1(d)$ and $C_2(d)$ such that
there are $m\le C_1(d)N[\log N]^{d-1}$ points $\xi^1, \ldots, \xi^m$
(actually forming a Smolyak net) for which
$$
\|f\|_\infty \le C_2(d)(\log N)^{d-1}\max\limits_{1\le j\le m}|f(\xi^j)|\quad \forall f\in \mathcal{T}(\Gamma(N)),
$$
where the subspace $\mathcal{T}(\Gamma(N))$ is defined in \eqref{eq-trig}.
In \cite{DPTT},
for each $d\in\N$ and each $N\in\N$, a set $W(N,d)\subset[0, 2\pi)^d$
was constructed with the following properties:
$|W(N, d)|\le C_1(d) N^{\alpha_d}(\log N)^{\beta_d}$,
$\alpha_d=\sum_{j=1}^d \frac{1}{j}$,
$\beta_d=d-\al_d$, and
$$
\|f\|_\infty \le C_2(d) \max_{\xi\in W(N,d) }|f(\xi)|\quad
f\in\mathcal{T}(\Gamma(N)).
$$
We now discuss what estimates Theorem \ref{DeDi2} provides for a random choice of points.
It is known (see Theorem 4.3.16 in \cite{VTbookMA}) that
$$
\mathcal{T}(\Gamma(N))\in NI(q,\infty)c(d)N^{1/q}[\log N]^{(d-1)(1-1/q)}.
$$
We also note that $\dim(\mathcal{T}(\Gamma(N))) \asymp N[\log N]^{d-1}$.
By Theorem \ref{DeDi2}, for
every $m\ge c\delta^{-2} q^2N^2[c(d)]^q[\log N]^{(d-1)q}$,
$$
\bP\Bigl(\frac{1}{2}\|f\|_q^q\le \frac{1}{m}\sum\limits_{j=1}^m|f(\xi^j)|^q\le \frac{3}{2}\|f\|_q^q
\quad \forall f\in X_N\Bigr)\ge 1-\delta.
$$
Let $\alpha>0$. We now take $q=\frac{\log N - (d-1)\log\log N}{\alpha(d-1)\log\log N+\alpha \log c(d)}$.
Then
$$
N^{1/q}[\log N]^{(d-1)(1-1/q)} = 2^{(\log N - (d-1)\log\log N)/q}[\log N]^{d-1}
= [c(d)]^\alpha[\log N]^{(\alpha+1)(d-1)}
$$
and
\begin{multline*}
q^2N^2[c(d)]^\alpha[\log N]^{(d-1)q}
\le \Bigl[\frac{\log N}{\alpha(d-1)\log\log N}\Bigr]^2 N^2 \Bigl[\frac{N}{[\log N]^{d-1}}\Bigr]^{1/\alpha}
= \\
\alpha^{-2}(d-1)^{-2} N^{2+1/\alpha}[\log N]^{2 - (d-1)/\alpha}[\log\log N]^{-2}.
\end{multline*}
Thus, for any $m\ge C_1(d)\delta^{-2}\alpha^{-2}N^{2+1/\alpha}[\log N]^{2 - (d-1)/\alpha}[\log\log N]^{-2}$,
we have
$$
\bP\Bigl(\|f\|_\infty\le C_2(d, \alpha)(\log N)^{(1+\alpha)(d-1)}\max\limits_{1\le j\le m}|f(\xi^j)|
\quad \forall f\in \mathcal{T}(\Gamma(N))\Bigr)\ge 1-\delta
$$
}
\end{Example}

In conclusion of this subsection we present another proof for the
discretization result under the entropy numbers condition.

\begin{Theorem}
Let $q\in(2,+\infty)$
and assume that $X_N$
satisfies the condition
$$
e_k(X^q_N,L_\infty) \le  B\left\{\begin{array}{ll}  (N/2^k)^{1/q}, &\quad k\le \log N,\\
 2^{-2^k/N},&\quad k\ge \log N,\end{array} \right.
$$
Then for every
$m\ge cq^2B^{q}N^{1-2/q}
\max\bigl\{\bigl(\frac{N^{1/2} - N^{1/q}}{1/2-1/q}\bigr)^2, N\bigr\}
$
(here $c>0$ is a universal numerical constant)
there are $m$ points $\xi^1, \ldots, \xi^m$ such that
$$
\frac{1}{2}\|f\|_q^q\le \frac{1}{m}\sum\limits_{j=1}^m|f(\xi^j)|^q\le \frac{3}{2}\|f\|_q^q
\quad \forall f\in X_N.
$$
\end{Theorem}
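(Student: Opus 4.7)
The plan is to follow the symmetrization-plus-generic-chaining strategy used in the proof of Theorem \ref{DeDi2}, but to replace the crude $L_\infty$-entropy estimate (obtained via the Nikol'skii inequality from the $L_q$-entropy) by the sharper direct $L_\infty$-entropy bound supplied by the hypothesis. As in Theorem \ref{DeDi2}, I would first invoke Lemma \ref{K-lem} to reduce the two-sided discretization inequality to a bound on the expected supremum of the Bernoulli process $W_f=\sum_{j=1}^m\chi_j|f(\bx_j)|^q$ over $f\in X_N^q$, and then control that supremum by combining the subgaussian increment bound of Lemma \ref{C-lem} with Dudley's estimate in Lemma \ref{T-lem}.

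The chaining increment estimate is exactly the one derived in the proof of Theorem \ref{DeDi1}: for $f,g\in X_N^q$ one has $\bigl(\sum_j\bigl||f(\bx_j)|^q-|g(\bx_j)|^q\bigr|^2\bigr)^{1/2}\le 2qH^{q/2-1}\|f-g\|_\infty\sup_{h\in X_N^q}\bigl(\sum_j|h(\bx_j)|^q\bigr)^{1/2}$, which via Lemma \ref{C-lem} yields $\bP(|W_f-W_g|\ge tR\|f-g\|_\infty)\le 2e^{-t^2/2}$ with $R=2qH^{q/2-1}\sup_h(\sum_j|h(\bx_j)|^q)^{1/2}$. Here $H$ is a Nikol'skii constant; the hypothesis at $k=0$ together with the central symmetry of $X_N^q$ gives $\|h\|_\infty\le CBN^{1/q}$ for every $h\in X_N^q$, so one may take $H=CBN^{1/q}$. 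Applying Lemma \ref{T-lem} then produces
\[
\bE\sup_{f\in X_N^q}|W_f|\le CR\cdot D,\qquad D:=\sum_{n\ge 0}2^{n/2}e_n(X_N^q,\|\cdot\|_\infty).
\]

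The central computation is to evaluate $D$ under the two-regime entropy hypothesis by splitting the sum at $n_*=\lceil\log_2 N\rceil$. For $n\le n_*$, inserting $e_n\le B(N/2^n)^{1/q}$ produces the geometric series $BN^{1/q}\sum_{n=0}^{n_*}2^{n(1/2-1/q)}$, which via the elementary inequality $2^\alpha-1\ge\alpha\ln 2$ (with $\alpha=1/2-1/q>0$ since $q>2$) sums to $\asymp B(N^{1/2}-N^{1/q})/(1/2-1/q)+BN^{1/q}$. For $n>n_*$, comparing $\sum_{n>n_*}2^{n/2}\cdot 2^{-2^n/N}$ with the integral $\int_N^\infty u^{-1/2}2^{-u/N}\,du$ yields a bound of order $BN^{1/2}$. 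Since $N^{1/q}\le N^{1/2}$ for $q>2$, the subdominant terms are absorbed and
\[
D^2\le CB^2\max\Bigl\{\Bigl(\tfrac{N^{1/2}-N^{1/q}}{1/2-1/q}\Bigr)^2,\ N\Bigr\}.
\]

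To finish, apply Lemma \ref{K-lem} with $\delta=1/2$ and $\Theta=CqH^{q/2-1}D$ (obtained by factoring $\sup_h(\sum_j|h(\bx_j)|^q)^{1/2}$ out of the Bernoulli-process bound); this yields $\bE\sup_{f\in X_N^q}\bigl|\tfrac{1}{m}\sum_j|f(\xi^j)|^q-\|f\|_q^q\bigr|\le 4A+4A^{1/2}$ with $A=m^{-1}\Theta^2$. Substituting $H=CBN^{1/q}$ gives $\Theta^2\le c_0 q^2 B^q N^{1-2/q}\max\{(N^{1/2}-N^{1/q})^2/(1/2-1/q)^2,\ N\}$, with the numerical factor $C^q$ coming from $H^{q-2}$ absorbed into $c_0$. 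Choosing $m$ as in the hypothesis forces $A\le 1/256$, so the expectation is $\le 1/2$ and Markov's inequality delivers the desired deterministic set $\xi^1,\ldots,\xi^m$. The main technical obstacle is the first regime of the Dudley sum: to retain the correct factor $(1/2-1/q)^{-1}$ as $q\to 2^+$ one must rely on $2^\alpha-1\ge\alpha\ln 2$ rather than any coarser bound, and this is precisely why the $\max$ with $N$ (contributed by the tail $n>n_*$) appears in the statement.
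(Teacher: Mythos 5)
Your proposal is correct and follows essentially the same route as the paper's proof: the same reduction via Lemma \ref{K-lem}, the same subgaussian increment bound and Dudley estimate from the proof of Theorem \ref{DeDi1}, the same splitting of the entropy sum at $\log N$ with the inequality $2^{\alpha}-1\ge \alpha\ln 2$ in the first regime and an integral comparison in the tail, and the same concluding Markov argument. The only refinement worth noting is that your symmetry argument at $k=0$ in fact yields the Nikol'skii constant $BN^{1/q}$ exactly (i.e.\ $C=1$), which is what makes the factor $B^{q}$ in the bound on $m$ come out cleanly without a $C^{q}$ loss.
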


\begin{proof}
The assumptions of the theorem implies that
$X_N\in NI(q,\infty)BN^{1/q}$.
From the proof of Theorem \ref{DeDi1} we get the bound
$$
\mathbb{E}\Bigl[\sup\limits_{f\in X_N^q}
\Bigl|\sum_{j=1}^{m}\chi_j|f(\bx_j)|^q\Bigr|\Bigr]
\le
CR\sum\limits_{n\ge 0} 2^{n/2}e_n(X_N^q, \|\cdot\|_\infty)
$$
where
$$
R:= 2q(BN^{1/q})^{q/2-1}\sup\limits_{h\in X_N^q}
\Bigl(\sum_{j=1}^{m}|h(\bx_j)|^q\Bigr)^{1/2}
$$
By the assumptions of the theorem one has
\begin{multline*}
\sum\limits_{n\ge 0} 2^{n/2}e_n(X_N^q, \|\cdot\|_\infty)
\le
BN^{1/q}\sum\limits_{n\le \log N} 2^{n/2}2^{-n/q}
+B\sum\limits_{n\ge \log N} 2^{n/2}2^{-2^n/N}
\\\le
BN^{1/q}\frac{(2N)^{\frac{1}{2}-\frac{1}{q}}-1}
{2^{\frac{1}{2}-\frac{1}{q}}-1}+
2B \sum\limits_{n\colon 2^n\ge N} (2^n)^{-1/2}
\cdot 2^{-2^n/N}2^{n-1}
\\ \le
BN^{1/q}\frac{(2N)^{\frac{1}{2}-\frac{1}{q}}-1}
{\ln 2(1/2-1/q)}
+ 2B\sum\limits_{n\colon 2^n\ge N}
\int_{2^{n-1}}^{2^n}x^{-1/2}2^{-x/N}\, dx
\\
\le
BN^{1/q}\frac{(2N)^{\frac{1}{2}-\frac{1}{q}}-2^{\frac{1}{2}-\frac{1}{q}}
+N^{\frac{1}{2}-\frac{1}{q}}-1}
{\ln 2(1/2-1/q)}
+ 2B\int_{0}^{\infty}x^{ - 1/2}2^{-x/N}\, dx
\\ \le
\frac{3}{\ln 2}
 B\frac{N^{1/2}- N^{1/q}}{1/2-1/q}
+ 2B\bigl(\tfrac{N}{\ln 2}\bigr)^{1/2}\Gamma(1/2).
\end{multline*}
Thus, as in the proof of Theorem \ref{DeDi1},
by Lemmas \ref{C-lem} and \ref{T-lem}, we have
\begin{multline*}
\mathbb{E}\Bigl[\sup\limits_{f\in X_N^q}
\Bigl|\sum_{j=1}^{m}\chi_j|f(\bx_j)|^q\Bigr|\Bigr]
\\
\le
C_1
qB^{q/2}N^{1/2-1/q}\max\Bigl\{\frac{N^{1/2} - N^{1/q}}{1/2-1/q}, N^{1/2}\Bigr\}
\sup\limits_{h\in X_N^q}
\Bigl(\sum_{j=1}^{m}|h(\bx_j)|^q\Bigr)^{1/2}.
\end{multline*}
Now, by Lemma \ref{K-lem} we have
$$
\mathbb{E}\Bigl[\sup\limits_{f\in X_N^q}
\Bigl|\frac{1}{m}\sum\limits_{j=1}^m|f(\xi^j)|^q - \|f\|_q^q\Bigr|
\Bigr]\le 4A+4A^{1/2}
$$
where
$$
A=
m^{-1}
C_1^2
q^2B^{q}N^{1-2/q}\max\Bigl\{\Bigl(\frac{N^{1/2} - N^{1/q}}{1/2-1/q}\Bigr)^2, N\Bigr\}.
$$
Arguing now as in the proof of Theorem \ref{DeDi2},
we get the announced result.
The theorem is proved.
\end{proof}

\section{Remez-type inequalities for the multivariate trigonometric polynomials}
\label{Re}

In this section we consider the problem of estimating the
global norm $\|f\|_{L_\infty(\Omega)}$ in terms of the local norms
$\|f\|_{L_\infty(\Omega\setminus B)}$
when the measure of the set $B\subset \Omega$ is small.
One of the first results concerning this problem is the Remez inequality.
The classical version of this inequality (see \cite{re})
provides a sharp upper bound for the norm
$\|P_n\|_{L_\infty[-1,1]}$ of an algebraic polynomial $P_n$
when the measure of the subset of
$[- 1,1]\setminus B:=\{\bx\in[-1, 1]\colon |p_n(x)|\le 1\}$ is known.
A sharp multidimensional Remez-type inequality
for algebraic polynomials was obtained in \cite{br-ga}.

In the case of the univariate trigonometric polynomials $f\in \Tr([-n,n])$
the Remez inequality reads as follows: for any
Lebesgue measurable set $B\subset \mathbb{T}$ we have
\begin{equation}\label{rem-or}
\|f\|_{L_\infty(\T)}\le C(n,|B|)
\|f\|_{L_\infty(\T \setminus B)}.
\end{equation}
In \cite{er}, (\ref{rem-or}) was proved with $C(n,|B|)=\exp({4n|B|})$ for
$|B|< \pi/ 2$
(further historic discussion of this inequality can be found in \cite[Ch. 5]{bor}, \cite[Sec. 3]{ga}, and \cite{lor-g}).
The Remez-type inequality for general exponential sums
was obtained in \cite{Naz93}.

Multidimensional variants of Remez' inequality  for trigonometric polynomials $f\in \Tr([-n_1,n_1]\times\cdots\times[-n_d,n_d])$
were obtained in \cite{nurs}:
\be\label{Re2}
\|f\|_{L_\infty(\mathbb{T}^d)}\le
\exp\Big( 2d \big( |B|\prod_{j=1}^d{ n_j}\big)^{1/d} \Big)
\,
\|f\|_{L_\infty(\mathbb{T}^d \setminus B)}
\ee
for
$$
 |B|<
\Big(\frac{\pi}2\Big)^d  \frac{\Big(\min\limits_{1\leq j\leq d}{n_j}\Big)^d}{\prod_{j=1}^dn_j}.
$$
This improves the previous results for
the case of $n_1=\cdots=n_d$
from \cite{prymak} and \cite{kroo}.
Taking into account the fact $D(\bn):=\dim \Tr([-n_1,n_1]\times\cdots\times[-n_d,n_d])= \prod_{i=1}^d (2n_i+1)$, $\bn:=(n_1,\dots,n_d)$, we see that (\ref{Re2}) gives the inequality
$$
\|f\|_{L_\infty(\mathbb{T}^d)}\le
C(c,d)
\,
\|f\|_{L_\infty(\mathbb{T}^d \setminus B)}
$$
for  a subsets satisfying $|B|\le cD(\bn)^{-1}$.

We refer the reader to the paper \cite{TT} for further discussion of the Remez-type inequalities,
their applications, and connections to other inequalities. In this section we use one particular result from this paper.

 \begin{Theorem}\label{ReT1} Let $f$ be a continuous periodic function on $\T^d$. Assume that there exists a set $X_m=\{\bx^j\}_{j=1}^m \subset \T^d$ such that for all functions $f_\by(\bx):= f(\bx-\by)$ we have the discretization inequality
\be\label{Re3}
\|f_\by\|_\infty \le D\max_{1\le j\le m}|f_\by(\bx^j)|.
\ee
Then for any $B$ with $|B|<1/m$ we have
$$
\|f\|_{L_\infty(\Omega)} \le D\|f\|_{L_\infty(\Omega\setminus B)}.
$$
\end{Theorem}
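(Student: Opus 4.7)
\medskip

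The plan is to use a translation-averaging argument. The hypothesis gives us the discretization inequality for every translate $f_\by(\bx) = f(\bx - \by)$, so in particular $\|f\|_\infty = \|f_\by\|_\infty \le D\max_{1\le j\le m}|f(\bx^j - \by)|$ for every $\by \in \T^d$. The goal is to find a single $\by$ such that every translated sampling point $\bx^j - \by$ lies in $\T^d \setminus B$; then the right-hand side is bounded by $D\|f\|_{L_\infty(\T^d \setminus B)}$, giving the conclusion.

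First I would identify the bad set of translates. For each fixed $j$, the set of $\by$ for which $\bx^j - \by \in B$ is exactly the translate $\bx^j - B$, which has Lebesgue measure $|B|$ by translation invariance on $\T^d$. A union bound over $j = 1, \ldots, m$ then shows that the set of $\by \in \T^d$ for which at least one of the points $\bx^j - \by$ lands in $B$ has measure at most $m|B|$.

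Under the hypothesis $|B| < 1/m$, this bad set has measure strictly less than $1 = |\T^d|$, so its complement is nonempty. Any $\by$ in the complement has the property that $\bx^j - \by \in \T^d \setminus B$ for all $j$, and for this $\by$ the discretization inequality applied to $f_\by$ yields
\[
\|f\|_\infty = \|f_\by\|_\infty \le D\max_{1\le j\le m}|f(\bx^j-\by)| \le D\|f\|_{L_\infty(\T^d\setminus B)}.
\]

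There is no real obstacle here; the only subtlety is the translation-invariance of Lebesgue measure on $\T^d$ (ensuring $|\bx^j - B| = |B|$) together with the fact that the inequality (\ref{Re3}) is assumed uniformly in $\by$, so we may choose $\by$ after seeing $B$. The statement of the theorem mentions $\Omega$ in the conclusion, which I read as $\T^d$ to match the periodic setup.
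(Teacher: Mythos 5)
Your proof is correct, and it is the standard argument: the paper itself does not prove Theorem \ref{ReT1} but imports it from \cite{TT}, where the proof is exactly this pigeonhole step --- the bad translates $\bigcup_{j}(\bx^j - B)$ have measure at most $m|B| < 1$, so some $\by$ moves all sampling points into $\T^d\setminus B$. Your reading of $\Omega$ as $\T^d$ (with normalized measure, so that $|B|<1/m$ forces the union not to cover the torus) is the intended one, and the identification $\|f_\by\|_\infty=\|f\|_\infty$ makes the conclusion immediate.
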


We now
combine Theorem \ref{ReT1} and results on the discretization of the uniform norm
to provide new Remez-type inequalities for subspaces $\Tr(Q)$ with rather general $Q$.
Our first observation is a
corollary of Theorem \ref{AT2}.
Clearly, $\Tr(Q) \in NI(2,\infty)H$ with $H=|Q|^{1/2}$.
Therefore, Theorem \ref{AT2} and Theorem \ref{ReT1} imply the following Remez-type inequality.

\begin{Theorem}\label{ReT7} There are two absolute constants $c_1$ and $C_1$ such that for  any $Q\subset \Z^d$, for any $B\subset \T^d$, $|B|\le c_1|Q|^{-1}$, and for any $f\in \Tr(Q)$ (see \eqref{eq-trig}) we have
$$
\|f\|_{L_\infty(\T^d)} \le C_1|Q|^{1/2}\|f\|_{L_\infty(\T^d\setminus B)}.
$$
\end{Theorem}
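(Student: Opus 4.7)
The plan is to combine Theorem \ref{AT2} with Theorem \ref{ReT1}, using the translation invariance of the subspace $\mathcal{T}(Q)$ to pass from a uniform-norm discretization for $f$ to the same discretization for every shift $f_{\by}$.

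First I would verify that $\mathcal{T}(Q) \in NI(2,\infty)H$ with $H = |Q|^{1/2}$. Taking $\{e^{i(\bk,\bx)}\}_{\bk\in Q}$ as an orthonormal basis of $\mathcal{T}(Q)$ with respect to the normalized Lebesgue measure on $\T^d$, one has $\sum_{\bk\in Q}|e^{i(\bk,\bx)}|^2 = |Q|$, so by Remark \ref{AR2} the Nikol'skii inequality holds with $H=|Q|^{1/2}$. Applying Theorem \ref{AT2} yields absolute constants $c_1',C_2$ and a set $\{\xi^j\}_{j=1}^m \subset \T^d$ with $m \le c_1'|Q|$ such that
\[
\|g\|_\infty \le C_2 |Q|^{1/2} \max_{1\le j\le m}|g(\xi^j)| \quad \forall g \in \mathcal{T}(Q).
\]

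Next I would invoke translation invariance: for any fixed $\by\in\T^d$ and any $f\in\mathcal{T}(Q)$, the shift $f_{\by}(\bx)=f(\bx-\by)$ belongs to $\mathcal{T}(Q)$ (its Fourier coefficients are $\hat f(\bk)e^{-i(\bk,\by)}$, still supported on $Q$). Therefore the inequality from the previous step applies uniformly to every shift:
\[
\|f_{\by}\|_\infty \le C_2 |Q|^{1/2} \max_{1\le j\le m}|f_{\by}(\xi^j)|\quad \forall \by\in\T^d.
\]
This is exactly the hypothesis (\ref{Re3}) of Theorem \ref{ReT1} with $D = C_2|Q|^{1/2}$.

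Finally I would apply Theorem \ref{ReT1}: whenever $|B| < 1/m$, which is guaranteed by the choice $|B| \le c_1 |Q|^{-1}$ with $c_1 := 1/(2c_1')$ (so that $|B|<1/m$ since $m\le c_1'|Q|$), we get
\[
\|f\|_{L_\infty(\T^d)} \le C_2|Q|^{1/2}\, \|f\|_{L_\infty(\T^d\setminus B)},
\]
which is the claim with $C_1 := C_2$. There is no real obstacle here; the only point requiring care is ensuring that the discretization set $\{\xi^j\}$ produced by Theorem \ref{AT2} can be used simultaneously for all translates, which is immediate from the fact that $\mathcal{T}(Q)$ is a shift-invariant subspace, so the discretization inequality on $\mathcal{T}(Q)$ automatically transfers to each $f_{\by}$ without changing either the points or the constant.
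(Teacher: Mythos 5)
Your proof is correct and takes essentially the same route as the paper: both verify $\Tr(Q)\in NI(2,\infty)|Q|^{1/2}$, apply Theorem \ref{AT2} to obtain $m\le C|Q|$ points with discretization factor $C|Q|^{1/2}$, use the shift invariance of $\Tr(Q)$ to check hypothesis \eqref{Re3}, and conclude via Theorem \ref{ReT1} with $|B|<1/m$. If anything, your treatment of the translates is the cleaner one, since you only use that $f_\by\in\Tr(Q)$ rather than any identity between $\max_j|f_\by(\xi^j)|$ and $\max_j|f(\xi^j)|$.
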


\begin{proof}
We note that, for any $f\in \Tr(Q)$, one has $\|f_\by\|_\infty = \|f\|_\infty$ and
$$\max_{1\le j\le m}|f_\by(\bx^j)| = \max_{1\le j\le m}|f(\bx^j)|.$$
Theorem \ref{AT2} implies the estimate \eqref{Re3} with $D=C_1\sqrt{|Q|}$ and $m\le C_2|Q|$.
Applying Theorem \ref{ReT1} we get the announced result.
\end{proof}

We now show that one can get a sharper constant in the inequality,
but the tradeoff is in the measure of the admissible sets.
The following discretization result was obtained in \cite{KKT} (see a combination of Theorem 3.1 and Proposition 3.1).

\begin{Theorem}[\cite{KKT}]\label{ReT5}  For any $d\in\N$ there exists a positive constant $C(d)$ such that for any $Q\subset \Z^d$ we have, for any natural number $s\in [|Q|^{1/2},|Q|]$,
$$
D(Q,m,d) \le 6(e(1+|Q|/s))^{1/2} \quad \text{provided} \quad m\ge C(d)M|Q| (\log M)^3,
$$
where $D(Q,m,d)$ was defined in Remark \ref{AR3} and
$$
M:= [|Q|^2 e^{2s}(1+|Q|/s)^{2s}]+1.
$$
In a special case $s=|Q|$ we have a slightly better bound
$$
D(Q,m,d)  \le 12 \quad \text{provided} \quad m\ge C(d)M|Q| (\log M)^3,
$$
with
$$
M= 2^{4|Q|}.
$$
\end{Theorem}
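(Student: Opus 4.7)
The natural approach is to derive Theorem \ref{ReT5} as a specialization of Theorem \ref{LiT3} to $X_N = \Tr(Q)$, combined with an explicit upper bound on the bilinear approximation quantity $\sigma_M^c(\D_{\Tr(Q)})_{(\infty,1)}$. With $N = |Q|$, the point count $m \le C_1(M+N)N\log((M+N)N)$ of Theorem \ref{LiT3} already matches the announced $m \ge C(d)M|Q|(\log M)^3$ up to routine bookkeeping (the extra logarithmic factors arising below when $\V_{Q,M}$ is constructed). So the whole content of Theorem \ref{ReT5} is packaged into a good bound on $\sigma_M^c$.

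To bound $\sigma_M^c(\D_{\Tr(Q)})_{(\infty,1)}$ I would exploit the translation invariance of the trigonometric Dirichlet kernel, $\D_{\Tr(Q)}(\bx,\by) = \D_Q(\bx-\by)$. As observed in the remark after Theorem \ref{LiT3}, every polynomial $g \in \Tr(\Lambda)$ with $\Lambda \cap Q = \emptyset$ and $|\Lambda| \le M$ produces an admissible diagonal kernel $\cW(\bx,\by) = g(\bx-\by) \in \cB_M(\Tr(Q)^\perp)$ for which $\sup_{\bx}\|\D_Q(\bx-\cdot) - g(\bx-\cdot)\|_1 = \|\D_Q - g\|_1$. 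Setting $\V_{Q,M} := \D_Q - g$, the problem reduces to constructing a trigonometric polynomial $\V_{Q,M}$ whose Fourier coefficients equal $1$ on $Q$, whose spectrum has cardinality at most $|Q| + M$, and whose $L_1(\T^d)$ norm is at most a constant multiple of $(1+|Q|/s)^{1/2}$ for each $s \in [|Q|^{1/2}, |Q|]$.

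The construction of $\V_{Q,M}$ is the technical heart of the argument, and I would proceed by a randomized construction paired with a discretization/net step. Choose $s$ i.i.d.\ elementary random polynomials $\eta_1,\dots,\eta_s$ from a distribution whose average reproduces $\D_Q$, and form the empirical mean $\Psi_s = s^{-1}\sum_j \eta_j$. A Khintchine/second-moment computation should yield $\E\|\Psi_s\|_1 \lesssim (1 + |Q|/s)^{1/2}$, with the factor $1+|Q|/s$ being exactly the variance-to-mean ratio of the sampling scheme. To pass from the random, generally highly supported, $\Psi_s$ to an explicit polynomial with only $M = |Q|^2 e^{2s}(1+|Q|/s)^{2s}$ nonzero frequencies, I would couple the probabilistic estimate with a union bound over a covering of the sample space of size $e^{2s}(1+|Q|/s)^{2s}$; this covering is what produces the extra $(\log M)^2$ in the point count once combined with Theorem \ref{LiT3}. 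The special case $s = |Q|$ corresponds to an exhaustive combinatorial selection inside a fixed frequency set of cardinality $2^{4|Q|}$, which eliminates the probabilistic error and yields the sharper $L_1$ bound $\|\V_{Q,M}\|_1 \le 2$ and hence $D(Q,m,d) \le 12$.

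The hardest step is the last one: simultaneously achieving a tight $L_1$ bound of order $(1+|Q|/s)^{1/2}$ \emph{and} a frequency count that grows only like $e^{2s}(1+|Q|/s)^{2s}$. These two demands compete directly, and the choice of the elementary pieces $\eta_j$ together with the associated net must be tuned finely, especially since $Q$ is completely arbitrary and no arithmetic structure of $Q$ is available to exploit. Once such a $\V_{Q,M}$ is in hand, the bound $\sigma_M^c(\D_{\Tr(Q)})_{(\infty,1)} \le \|\V_{Q,M}\|_1$ together with Theorem \ref{LiT3} yields Theorem \ref{ReT5} immediately.
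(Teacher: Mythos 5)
Your overall architecture is the right one and is essentially what the paper attributes to \cite{KKT}: Theorem \ref{ReT5} is obtained there by combining the conditional discretization result (Theorem \ref{LiT1}, via an $L_1$ Marcinkiewicz-type theorem for the auxiliary space $Y_S$) with an estimate of $\sigma_M^c(\D_{\Tr(Q)})_{(\infty,1)}$ through a generalized de la Vall\'ee Poussin kernel $\V_{Q,M}$ satisfying $\hat\V_{Q,M}(\bk)=1$ on $Q$; the present paper does not reprove this but cites it as a combination of Theorem 3.1 and Proposition 3.1 of \cite{KKT}. Your reduction of $\sigma_M^c$ to $\|\D_Q-g\|_1$ via the diagonal kernel $g(\bx-\by)$ is correct, and your bookkeeping of the point count against $C(d)M|Q|(\log M)^3$ is fine (Theorem \ref{LiT3} in fact gives the improved factor $\log M$, cf.\ Remark \ref{ReR1}).

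The genuine gap is that the entire quantitative content of the theorem --- the existence of an $M$-term $\V_{Q,M}$ with $\|\V_{Q,M}\|_1\lesssim(1+|Q|/s)^{1/2}$ for $M\asymp|Q|^2e^{2s}(1+|Q|/s)^{2s}$, and $\|\V_{Q,M}\|_1\le2$ for $M=2^{4|Q|}$ --- is exactly the step you leave as a sketch, and the sketch as written would not work. First, the kernel must satisfy $\hat\V_{Q,M}(\bk)=1$ \emph{exactly} for $\bk\in Q$, since otherwise $\D_Q-\V_{Q,M}$ fails the orthogonality condition defining $\cB_M(X_N^\perp)$; an empirical mean of i.i.d.\ random polynomials achieves this only in expectation, and a ``union bound over a covering of the sample space'' neither restores exactness nor controls the support of the realized polynomial. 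Second, and more fundamentally, a sampling scheme ``whose average reproduces $\D_Q$'' aims at the wrong object: if $\Psi_s$ concentrates near $\D_Q$, then $\|\Psi_s\|_1$ concentrates near $\|\D_Q\|_1$, which for generic (e.g.\ Sidon-type) $Q$ is of order $|Q|^{1/2}$ --- precisely the trivial bound one is trying to beat. The point of the de la Vall\'ee Poussin construction is that $\V_{Q,M}$ differs substantially from $\D_Q$ off the spectrum $Q$, and it is the freedom in those extra $M$ coefficients that produces the cancellation yielding $\|\V_{Q,M}\|_1\le 2$; nothing in your sketch produces this cancellation, nor the delicate trade-off between the $L_1$ bound and the frequency count $e^{2s}(1+|Q|/s)^{2s}$. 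So the proposal correctly reduces the theorem to the key lemma of \cite{KKT} but does not prove that lemma.
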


\begin{Remark}\label{ReR1} The use of Theorem \ref{LiT2} in the proof of Theorem \ref{ReT5} from
\cite{KKT} allows us to replace $(\log M)^3$ by $\log M$ in Theorem \ref{ReT5}.
\end{Remark}

Theorem \ref{ReT5} with $s=|Q|$, Remark \ref{ReR1}, and Theorem \ref{ReT1} imply the following Remez-type inequality
for $\Tr(Q)$ with any $Q\subset \Z^d$.

\begin{Theorem}\label{ReT6} For any $d\in \N$ there exists a positive constant $c(d)$ such that for  any $Q\subset \Z^d$, for any $B\subset \T^d$, $|B|\le c(d)2^{-4|Q|}|Q|^{-2}$, and for any $f\in \Tr(Q)$ (see \eqref{eq-trig}) we have
$$
\|f\|_{L_\infty(\T^d)} \le 12\|f\|_{L_\infty(\T^d\setminus B)}.
$$
\end{Theorem}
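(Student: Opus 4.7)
The plan is to simply thread together the three ingredients indicated in the paragraph preceding the statement: the discretization bound of Theorem \ref{ReT5} (in the special case $s=|Q|$), the improvement in Remark \ref{ReR1} that replaces $(\log M)^3$ by $\log M$, and the transference result of Theorem \ref{ReT1}.

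First, I would invoke Theorem \ref{ReT5} in the case $s=|Q|$ with the strengthening of Remark \ref{ReR1}: this produces, for any $Q\subset\Z^d$, a set of points $\{\bx^j\}_{j=1}^m\subset\T^d$ with
$$
m\le C(d)\,M\,|Q|\,\log M,\qquad M=2^{4|Q|},
$$
such that every $f\in\Tr(Q)$ satisfies $\|f\|_\infty\le 12\max_{1\le j\le m}|f(\bx^j)|$. Since $\log M=4|Q|\log 2$, this bound simplifies to $m\le C_1(d)\,2^{4|Q|}\,|Q|^{2}$ for an appropriate $C_1(d)$.

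Next, I would observe that the subspace $\Tr(Q)$ is translation invariant: for every $\by\in\T^d$ and $f\in\Tr(Q)$ the shift $f_\by(\bx):=f(\bx-\by)$ again lies in $\Tr(Q)$. Consequently the discretization inequality from the previous step applies uniformly to the whole family $\{f_\by\}_{\by\in\T^d}$, i.e.
$$
\|f_\by\|_\infty \le 12\max_{1\le j\le m}|f_\by(\bx^j)|\qquad \forall\by\in\T^d.
$$
This is precisely hypothesis \eqref{Re3} of Theorem \ref{ReT1} with $D=12$ and the same $m$ as above.

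Finally, I would apply Theorem \ref{ReT1} directly. Setting $c(d):=1/C_1(d)$, any measurable $B\subset\T^d$ with $|B|\le c(d)\,2^{-4|Q|}\,|Q|^{-2}$ satisfies $|B|<1/m$, so Theorem \ref{ReT1} yields
$$
\|f\|_{L_\infty(\T^d)}\le 12\,\|f\|_{L_\infty(\T^d\setminus B)},\qquad f\in\Tr(Q),
$$
which is the claim. There is no real obstacle here; the only thing to be careful about is the bookkeeping of the constant $c(d)$, arising from absorbing the factor $4\log 2$ and the constant $C(d)$ from Theorem \ref{ReT5} into a single $c(d)$, so that the bound $|B|<1/m$ is implied by the stated hypothesis on $|B|$.
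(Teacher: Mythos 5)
Your proposal is correct and follows exactly the route the paper intends: Theorem \ref{ReT5} with $s=|Q|$ strengthened by Remark \ref{ReR1} gives $m\le C_1(d)2^{4|Q|}|Q|^2$ points with constant $12$, translation invariance of $\Tr(Q)$ upgrades this to hypothesis \eqref{Re3}, and Theorem \ref{ReT1} then yields the Remez inequality for $|B|<1/m$. The paper states Theorem \ref{ReT6} as an immediate consequence of these three ingredients without further argument, so your write-up matches its proof in both substance and bookkeeping.
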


We point out that Theorem \ref{ReT6} holds for all subsets $Q$ with the constant $12$ in the Remez inequality. However, we are paying a big price for that -- we can only delete a set $B$ of
exponentially small measure.

\section{Some comments}
\label{C}

In Section \ref{A} we introduced some results,
which guarantee for an arbitrary $N$-dimensional subspace $X_N\subset \C(\Omega)$ existence of a set of $m \le C_1N$ points with good properties, where $C_1$ is an absolute constant. It is clear that $C_1\ge 1$. In this subsection we comment on the case, when $C_1$
is close to $1$.

We begin with comments on the case of weighted discretization, namely, on Theorem \ref{AT1}. It is pointed out in \cite{VT159} that the paper by J. Batson, D.A. Spielman, and N. Srivastava \cite{BSS}  basically solves the discretization problem with weights.  We present an explicit formulation of this important result in our notation.

\begin{Theorem}[{\cite[Theorem 3.1]{BSS}}]\label{BSS} Let  $\Omega_M=\{x^j\}_{j=1}^M$ be a discrete set with the probability measure $\mu_M(x^j)=1/M$, $j=1,\dots,M$, and let $X_N$ be an $N$-dimensional subspace of real functions defined on $\Omega_M$.
Then for any
number $b>1$ there  exists a set of weights $\lambda_j\ge 0$ such that $|\{j: \lambda_j\neq 0\}| \le \lceil bN \rceil$ so that for any $f\in X_N $ we have
\begin{equation*}
\|f\|_2^2 \le \sum_{j=1}^M \lambda_jf(x^j)^2 \le \frac{b+1+2\sqrt{b}}{b+1-2\sqrt{b}}\|f\|_2^2.
\end{equation*}
\end{Theorem}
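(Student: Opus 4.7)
The plan is to reduce the statement to the abstract matrix form used by Batson--Spielman--Srivastava and then describe their iterative barrier-function construction.

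\textbf{Reduction to matrices.} Fix an orthonormal basis $\{u_1,\dots,u_N\}$ of $X_N$ with respect to $\mu_M$ and set
$$
y_j := \tfrac{1}{\sqrt{M}}\bigl(u_1(x^j),\dots,u_N(x^j)\bigr)^{T}\in\R^N,\qquad j=1,\dots,M.
$$
Orthonormality gives $\sum_{j=1}^M y_jy_j^{T}=I_N$. For $f=\sum_i a_iu_i$ one has $\|f\|_2^2=\|a\|^2$ and $f(x^j)^2=M\langle a,y_j\rangle^2$. Writing $s_j:=M\lambda_j$, the conclusion becomes: find $s_j\ge 0$ with $|\{j:s_j\ne 0\}|\le \lceil bN\rceil$ and
$$
I_N\;\preceq\;\sum_{j=1}^M s_j\,y_jy_j^{T}\;\preceq\;\kappa I_N,\qquad \kappa=\tfrac{(\sqrt{b}+1)^2}{(\sqrt{b}-1)^2}.
$$

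\textbf{Iterative construction with two barriers.} Starting from $A_0=0$, the plan is to produce $A_{k+1}=A_k+t_k y_{j_k}y_{j_k}^{T}$ for $k=0,\dots,m-1$ with $m=\lceil bN\rceil$, while maintaining sliding thresholds $\ell_k<\lambda_{\min}(A_k)\le\lambda_{\max}(A_k)<u_k$, where $u_k=u_0+k\delta_U$ and $\ell_k=\ell_0+k\delta_L$. Control is enforced through the potentials
$$
\Phi^{u}(A):=\operatorname{tr}\bigl((uI-A)^{-1}\bigr),\qquad \Phi_{\ell}(A):=\operatorname{tr}\bigl((A-\ell I)^{-1}\bigr),
$$
by requiring $\Phi^{u_{k+1}}(A_{k+1})\le \Phi^{u_k}(A_k)$ and $\Phi_{\ell_{k+1}}(A_{k+1})\le \Phi_{\ell_k}(A_k)$ at every step. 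Initial parameters are chosen so $\Phi^{u_0}(0)$ and $\Phi_{\ell_0}(0)$ are small (e.g.\ $u_0=N\sqrt{b}/(\sqrt{b}-1)$, $\ell_0=-N\sqrt{b}/(\sqrt{b}+1)$), with $\delta_U=1$ and $\delta_L=(\sqrt{b}-1)/(\sqrt{b}+1)$. Then $u_m/\ell_m=\kappa$ after rescaling, which delivers the stated ratio.

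\textbf{The main obstacle: the one-step existence lemma.} The heart of the argument is showing that at every step there is a usable pair $(j,t)$. Using the Sherman--Morrison identity, the two potential-control inequalities translate into the condition
$$
U_{A_k}(y_{j_k})\;\le\;\tfrac{1}{t_k}\;\le\;L_{A_k}(y_{j_k}),
$$
where $U_A(v)$ and $L_A(v)$ are explicit rational expressions in $v^{T}(uI-A)^{-1}v$, $v^{T}(uI-A)^{-2}v$ and the analogous quantities for the lower barrier, together with $\delta_U,\delta_L$ and the current potential values. Such a $(j_k,t_k)$ exists provided some index $j$ satisfies $U_{A_k}(y_j)\le L_{A_k}(y_j)$. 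To produce this index one averages over $j$ and exploits the isotropy $\sum_j y_jy_j^{T}=I$ to evaluate
$$
\sum_{j=1}^M U_{A_k}(y_j)\;\le\;\operatorname{tr}\bigl((u_{k+1}I-A_k)^{-2}\bigr)\bigl(\tfrac{1}{\delta_U}+\text{corr.}\bigr),
$$
and similarly from below for $\sum_j L_{A_k}(y_j)$. Plugging in the parameter choices and the induction hypotheses $\Phi^{u_k}(A_k)\le 1/\delta_U$, $\Phi_{\ell_k}(A_k)\le 1/\delta_L$ (which stay invariant through the iteration) shows $\sum_j L_{A_k}(y_j)\ge \sum_j U_{A_k}(y_j)$, so some $j_k$ works. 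Verifying this inequality with sharp constants is the delicate calculation; it is what dictates the precise values of $\delta_U,\delta_L$ and hence the sharp constant $\kappa$.

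\textbf{Conclusion.} After $m=\lceil bN\rceil$ steps, the eigenvalues of $A_m=\sum_k t_ky_{j_k}y_{j_k}^{T}$ lie in $(\ell_m,u_m)$, so the condition number of $A_m$ is at most $\kappa$. Rescaling the weights by $1/\lambda_{\min}(A_m)$ produces the vector $(s_j)$ with at most $\lceil bN\rceil$ nonzero entries and $I\preceq \sum_j s_jy_jy_j^{T}\preceq \kappa I$, which is exactly the claimed two-sided inequality for $\sum_j\lambda_j f(x^j)^2$.
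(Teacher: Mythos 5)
The paper does not prove this statement at all: it is quoted verbatim as an external result, with the proof delegated entirely to the citation \cite[Theorem 3.1]{BSS}. So there is no in-paper argument to compare yours against; I can only assess your outline on its own terms.

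Your reduction to the matrix form is correct and is exactly the standard one: orthonormality of $\{u_i\}$ with respect to $\mu_M$ gives $\sum_j y_jy_j^{T}=I_N$, and the substitution $s_j=M\lambda_j$ turns the two-sided norm inequality into $I_N\preceq\sum_j s_jy_jy_j^{T}\preceq\kappa I_N$ with $\kappa=(\sqrt{b}+1)^2/(\sqrt{b}-1)^2=\frac{b+1+2\sqrt{b}}{b+1-2\sqrt{b}}$. Your parameter bookkeeping also checks out: with $u_0=N\sqrt{b}/(\sqrt{b}-1)$, $\ell_0=-N\sqrt{b}/(\sqrt{b}+1)$, $\delta_U=1$, $\delta_L=(\sqrt{b}-1)/(\sqrt{b}+1)$ and $m=\lceil bN\rceil$ steps one indeed gets $u_m/\ell_m=\kappa$ (e.g.\ for $b=4$: $u_m=6N$, $\ell_m=2N/3$, ratio $9=\kappa$), and the initial potentials satisfy the compatibility condition $1/\delta_U+\Phi^{u_0}(0)\le 1/\delta_L-\Phi_{\ell_0}(0)$ with equality, consistent with the sharpness of the constant.

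The gap is that you have not proved the one-step existence lemma, and you say so yourself. That lemma --- that whenever $1/\delta_U+\Phi^{u}(A)\le 1/\delta_L-\Phi_{\ell}(A)$ there is an index $j$ and a weight $t>0$ with $U_A(y_j)\le 1/t\le L_A(y_j)$, where $U_A$ and $L_A$ are the Sherman--Morrison expressions controlling the two potentials --- is the entire technical content of the Batson--Spielman--Srivastava theorem; everything else in your write-up is routine. In particular the averaging step requires the refined bound $\sum_j U_A(y_j)\le 1/\delta_U+\Phi^{u}(A)$ (which uses the trace identity $\sum_j y_j^{T}My_j=\operatorname{tr}(M)$ applied to both $(u'I-A)^{-1}$ and $(u'I-A)^{-2}$, plus the monotonicity $\Phi^{u'}(A)\le\Phi^{u}(A)$) and the matching lower bound $\sum_j L_A(y_j)\ge 1/\delta_L-\Phi_{\ell}(A)$, whose proof is genuinely delicate because a naive estimate loses the sharp constant. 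As an outline of how one would prove the cited theorem your proposal is faithful and the approach is the right one, but as a standalone proof it is incomplete precisely at the step that carries all the difficulty.
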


As observed  in \cite [Theorem 2.13]{DPTT},  this last theorem with  a general probability space $(\Omega,  \mu)$ in place of the discrete space  $(\Omega_M, \mu_M)$ remains true (with other constant in the right hand side) if $X_N\subset L_4(\Omega,\mu)$.  It was proved in \cite{DPSTT2}  that the additional assumption $X_N\subset L_4(\Omega,\mu)$ can be dropped.

\begin{Theorem}[{\cite[Theorem 6.3]{DPSTT2}}]\label{DPSTT} If $X_N$ is an $N$-dimensional subspace of the real $L_2(\Omega,\mu)$, then for any $b\in (1,2]$, there exist a set of $m\leq    \lceil bN \rceil$ points $\xi^1,\ldots, \xi^m\in\Omega$ and a set of nonnegative  weights $\lambda_j$, $j=1,\ldots, m$,  such that
\[ \|f\|_2^2\leq  \sum_{j=1}^m \lambda_j f(\xi^j)^2 \leq  \frac C{(b-1)^2}  \|f\|_2^2,\  \ \forall f\in X_N,\]
where $C>1$ is an absolute constant.
\end{Theorem}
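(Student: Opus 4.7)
The plan is to reduce to the discrete BSS result (Theorem \ref{BSS}) by a Christoffel-function rescaling followed by a preliminary $L_2$-discretization. The rescaling is the one already used in the proof of Theorem \ref{AT3} above, and its purpose is to eliminate the $L_4$ hypothesis that appears in the \cite[Theorem 2.13]{DPTT} version.

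Step 1 (Christoffel rescaling). Let $\{u_1,\dots,u_N\}$ be an orthonormal basis of $X_N$ in $L_2(\Omega,\mu)$, set $w(\bx):=\sum_{j=1}^N u_j(\bx)^2$, and restrict attention to $\widetilde\Omega:=\{w>0\}$, off which every $f\in X_N$ vanishes. Define the probability measure $\widetilde\mu:=(w/N)\mu$ and $\widetilde u_j:=u_j/\sqrt w$. Then $\{\widetilde u_j\}$ is orthonormal in $L_2(\widetilde\mu)$, $\sum_j|\widetilde u_j|^2\equiv N$, and $f\mapsto \widetilde f:=f/\sqrt w$ is a linear isometry from $X_N\subset L_2(\mu)$ onto $\widetilde X_N:=\sp\{\widetilde u_j\}\subset L_2(\widetilde\mu)$. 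Crucially $\widetilde X_N$ has an orthonormal basis uniformly bounded by $\sqrt N$, so $\widetilde X_N\in NI(2,\infty)\sqrt N$ and in particular $\widetilde X_N\subset L_p(\widetilde\mu)$ for every $p$.

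Step 2 (discretization, BSS, and unscaling). Because $\widetilde X_N$ has a uniformly bounded basis, a standard matrix-Chernoff/Rudelson concentration argument produces, with positive probability, a set $\Omega_M=\{y^1,\dots,y^M\}$ of size $M$ only polynomial in $N$ such that
\[
\tfrac12\|g\|_{L_2(\widetilde\mu)}^2\le \frac1M\sum_{k=1}^M g(y^k)^2\le \tfrac32\|g\|_{L_2(\widetilde\mu)}^2\qquad\forall g\in\widetilde X_N.
\]
Viewing $\widetilde X_N$ as a subspace of real functions on the discrete probability space $(\Omega_M,\mu_M)$ and applying Theorem \ref{BSS} with parameter $b\in(1,2]$ yields nonnegative weights $\widetilde\lambda_k$ supported on at most $\lceil bN\rceil$ indices with upper constant $\frac{b+1+2\sqrt b}{b+1-2\sqrt b}$, which a direct Taylor expansion shows is $O((b-1)^{-2})$ as $b\downarrow 1$. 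Chaining the two inequalities, relabelling the BSS-active indices as $\xi^1,\dots,\xi^m$, $m\le\lceil bN\rceil$, setting $\lambda_j:=\widetilde\lambda_{k(j)}/w(\xi^j)$, and using $f=\sqrt w\cdot\widetilde f$ converts the discrete two-sided bound into
\[
\|f\|_{L_2(\mu)}^2\le \sum_{j=1}^m\lambda_j f(\xi^j)^2\le \frac{C}{(b-1)^2}\|f\|_{L_2(\mu)}^2,\qquad f\in X_N,
\]
as required.

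The main obstacle is the preliminary discretization: without the $L_4$ assumption one cannot directly apply matrix concentration to the original basis $\{u_j\}$, and this is exactly the difficulty that prevented the earlier \cite[Theorem 2.13]{DPTT} argument from working. The Christoffel rescaling in Step 1 is designed specifically to bypass this: after the change of basis functions in $\widetilde X_N$ are uniformly bounded, so concentration becomes routine, and everything else is constant-bookkeeping, with the $(b-1)^{-2}$ blow-up traceable entirely to BSS.
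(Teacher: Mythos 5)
The paper does not prove this statement: it is quoted from \cite{DPSTT2} (Theorem 6.3) and used as a black box, so there is no internal proof to compare against. Your reconstruction --- Christoffel-function change of density to force $NI(2,\infty)\sqrt N$, a preliminary equal-weight $L_2$ discretization with $O(N\log N)$ points via Rudelson-type matrix concentration, then BSS sparsification on the resulting discrete space and unscaling --- is the standard route for removing the $L_4$ hypothesis, and the argument is sound: the lower bound in the preliminary discretization keeps the restricted subspace $N$-dimensional, and $\frac{(\sqrt b+1)^2}{(\sqrt b-1)^2}=O((b-1)^{-2})$ gives the stated constant. One bookkeeping correction: with $\widetilde\mu=(w/N)\mu$ you must take $\widetilde u_j=\sqrt{N}\,u_j/\sqrt{w}$ (as in the proof of Theorem \ref{AT3}) for $\{\widetilde u_j\}$ to be orthonormal in $L_2(\widetilde\mu)$ and for $\sum_j\widetilde u_j^2\equiv N$; with your normalization $\widetilde u_j=u_j/\sqrt w$ the basis elements have norm $N^{-1/2}$ and $f\mapsto f/\sqrt w$ is an isometry only up to the factor $N^{-1/2}$, though this constant washes out of the final two-sided inequality after the weights are adjusted.
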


Replacing in the proof of Theorems \ref{AT2} and \ref{AT3} the use of Theorem \ref{AT1} by the use of Theorem \ref{DPSTT} we obtain the statement of Remark \ref{AR2} below  in the case of subspaces of real functions.   Very recent results from
\cite{BSU}, where Theorem \ref{BSS} was extended to the complex case, allow us to obtain Remark
\ref{CR2} in the complex case as well.

\begin{Remark}\label{CR2}   Theorems \ref{AT2} and \ref{AT3}  hold
with $C_1=b$, where $b$ is any number greater than $1$ and with $C_2=C(b)$ being a constant, which
is allowed to depend on $b$.
\end{Remark}

We now compare Theorems \ref{AT2} and \ref{LiT3}.
Both of them have an extra factors
in the discretization inequalities.
In Theorem \ref{AT2} that factor comes from the Nikol'skii
inequality for the pair $(2,\infty)$ and in
Theorem \ref{LiT3} it is a special bilinear approximation
characteristic of the corresponding Dirichlet kernel.
We now show that these
extra factors are related.

As we have already mentioned in Remark \ref{AR2},
one has $(2,\infty)$ Nikol'skii
inequality \eqref{I4a} with the factor
$$
H = \max_\bx
\left(\sum_{j=1}^N u_j(\bx)^2\right)^{1/2}
=\max_\bx (\cD(X_N,\bx,\bx))^{1/2}.
$$
Thus, for any $M$, we have
$$
\sigma_M^c(\D(X_N))_{(\infty,1)} \le \max_\bx \|\D(X_N,\bx,\cdot)\|_1
$$
$$
\le \max_\bx \|\D(X_N,\bx,\cdot)\|_2 =  \max_\bx (\cD(X_N,\bx,\bx))^{1/2}.
$$

Therefore, the extra factor in Theorem \ref{LiT3} is always better than in Theorem \ref{AT2}.
However, the condition on $m$ in Theorem \ref{AT2} is much better than in Theorem \ref{LiT3}.

\vskip .2in

{\bf Acknowledgements.}
The authors would like to thank the anonymous referee
for the helpful comments and corrections which have allowed to significantly improve the paper.

The authors are thankful to David Krieg for bringing the paper \cite{KW} to their attention.

E. Kosov and results of Sections 2 and 4 (subsection 2)
were supported by the Russian Science Foundation (project No. 22-11-00015)
at the Lomonosov Moscow State University.

V. Temlyakov and results of Sections 3, 4 (subsection 1a and 1b), 5, 6
were supported by the Russian Science Foundation (project No. 23-71-30001)
at the Lomonosov Moscow State University.

{\bf Competing interests.}
The authors have no competing interests to declare that are relevant to the content of this article.

\Addresses

\end{document}